\newtheorem{theorem}{Theorem}[section]
\newtheorem{lemma}[theorem]{Lemma}
\newtheorem{proposition}[theorem]{Proposition}
\theoremstyle{definition}
\newtheorem{definition}[theorem]{Definition}
\theoremstyle{remark}
\numberwithin{equation}{section}
\newcommand{\T}{\mathcal{T}}
\newcommand{\storus}{S^1 \times D^2}
\newcommand{\K}{K(S^1\times D^2)}
\newcommand{\Krel}{K(S^1\times D^2,2)}
\newcommand{\laurent}{\mathbb{Z}[A,A^{-1}]}
\newcommand{\G}{\mathcal{G}}
\newcommand{\A}{\mathcal{A}}
\newcommand{\C}{\mathcal{C}}
\newcommand{\s}{\mathcal{S}}
\newcommand{\f}{\mathcal{F}}
\newcommand{\vareps}{\varepsilon}
\newcommand{\ds}{\displaystyle}
\newcommand{\pair}[1]{\langle #1 \rangle}
\begin{document}

\title{The Kauffman bracket ideal for genus-1 tangles}
\author{Susan M. Abernathy}
\address{Mathematics Department\\
Louisiana State University\\
Baton Rouge, Louisiana}
\email{sabern1@tigers.lsu.edu}

\date{}


\begin{abstract}
Given a compact oriented 3-manifold $M$ in $S^3$ with boundary, an $(M,2n)$-tangle $\mathcal{T}$ is a 1-manifold with $2n$ boundary components properly embedded in $M$.  We say that $\mathcal{T}$ embeds in a link $L$ in $S^3$ if $\mathcal{T}$ can be completed to $L$ by a 1-manifold with $2n$ boundary components exterior to $M$.  The link $L$ is called a closure of $\mathcal{T}$.  We define the Kauffman bracket ideal of $\mathcal{T}$ to be the ideal $I_\mathcal{T}$ of $\mathbb{Z}[A,A^{- 1}]$ generated by the reduced Kauffman bracket polynomials of all closures of $\mathcal{T}$.  If this ideal is non-trivial, then $\mathcal{T}$ does not embed in the unknot.  We give an algorithm for computing a finite list of generators for the Kauffman bracket ideal of any $(S^1 \times D^2, 2)$-tangle, also called a genus-1 tangle, and give an example of a genus-1 tangle with non-trivial Kauffman bracket ideal. Furthermore, we show that if a single-component genus-1 tangle $\hat \T$ can be obtained as the partial closure of a $(B^3, 4)$-tangle $\mathcal{T}$, then $I_\mathcal{T} = I_{\hat \T}$.
\end{abstract} 



\maketitle

\section{Introduction}\label{intro}

Let $M$ be a compact oriented 3-dimensional submanifold of $S^3$ with boundary.  Then an $(M,2n)$-tangle $\T$ is a 1-manifold with $2n$ boundary components properly embedded in $M$.  We say that $\T$ embeds in a link $L \subset S^3$ if $\T$ can be completed by a 1-manifold with $2n$ boundary components exterior to $M$ to form the link $L$; that is, there exists some 1-manifold with $2n$ boundary components in $S^3 -Int(M)$ such that upon gluing this manifold to $\T$ along their boundary points, we have a link in $S^3$ which is isotopic to $L$.  We say that $L$ is a closure of $\T$.

This definition naturally gives rise to the following question: given an $(M,2n)$-tangle $\T$ and a link $L \subset S^3$, when does $\T$ embed in $L$?

This embedding question has been studied before in the case where $M = B^3$ (see \cite{kr,psw,rub}) and discussed in the case where $M = \storus$ in \cite{kr} and \cite{rub}.  In \cite{kr}, Krebes asked whether the genus-1 tangle pictured in Fig. \ref{fig:krebesexample}, denoted by $\A$, can be embedded into the unknot.  It was this question that first motivated our interest in the topic of tangle embedding.  We partially answer this question in \cite{ab} using methods different than those in this paper.

\begin{figure}
\includegraphics[height=1in]{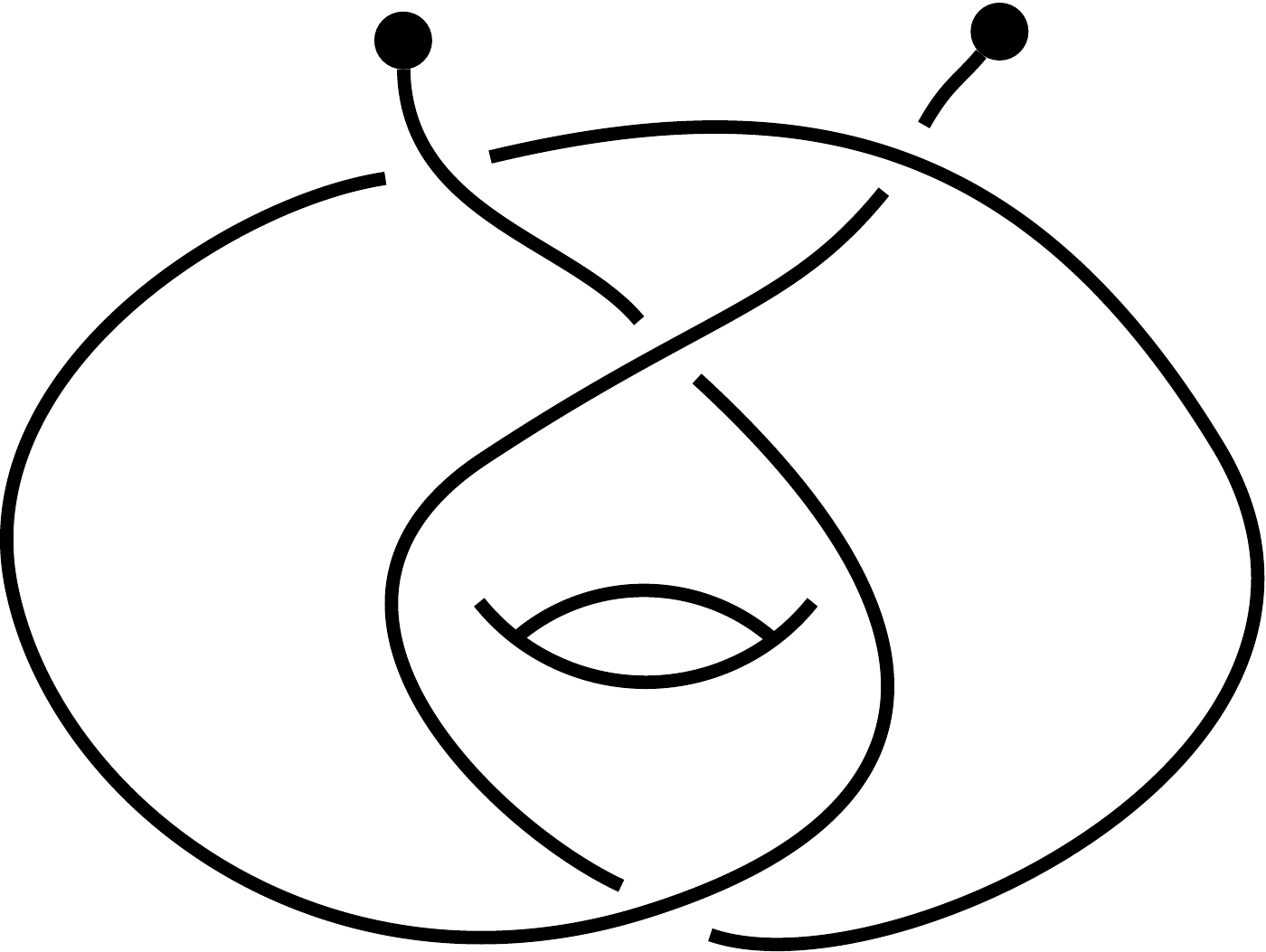}
\caption{Krebes's example, which we denote by $\A$.}\label{fig:krebesexample}
\end{figure}

Though our main concern in this paper is the case where $M$ is a solid torus, we first consider the case where $M=B^3$.  Suppose a $(B^3,2n)$-tangle $\T$ embeds in a link $L$.  Then the complement of $\T$ in $L$ is also a $(B^3,2n)$-tangle, since it is a 1-manifold with $2n$ boundary points properly embedded in the 3-ball $S^3 - Int(B^3)$.  Let $\s$ denote this complementary tangle.  We may view $L$ as the union of $\s$ and $\T$ along their boundary points.  In this case we refer to $L$ as the closure of $\T$ by $\s$, denoted by $\T^\s$.

In \cite{psw}, Przytycki, Silver and Williams examine the ideal $I_\T$ associated to a $(B^3,2n)$-tangle $\T$ generated by the reduced Kauffman bracket polynomials of certain closures of $\T$.  The Kauffman bracket polynomial of a link (diagram) $L$ is denoted by $\langle L \rangle$. From the definition given in Section \ref{subsection:kbsm}, it is clear that the Kauffman bracket polynomial of any non-empty link $L \subset S^3$ is a multiple of $\delta = -A^2 - A^{-2}$.  So we define the reduced Kauffman bracket polynomial to be $\langle L \rangle ^\prime = \langle L\rangle/\delta \in \laurent$.

The following theorem, proven in \cite{psw}, gives an obstruction to $(B^3,2n)$-tangles embedding in links.  A $2n$-Catalan tangle $\C$ is a crossingless $(B^3,2n)$-tangle with no trivial components.

\begin{theorem}[Przytycki, Silver, and Williams]\label{thm:psw}
If a $(B^3,2n)$-tangle $\T$ embeds in a link $L$, then the ideal $I_\T$ of $\laurent$ generated by the reduced Kauffman bracket polynomials of all diagrams $\langle \T^\C\rangle^\prime$, where $\C$ is any Catalan tangle, contains the polynomial $\langle L\rangle^\prime$.
\end{theorem}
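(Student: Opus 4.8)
The plan is to expand the complementary tangle in the Kauffman bracket skein module of the ball relative to its boundary points. Suppose $\T$ embeds in $L$, and fix compatible diagrams so that, writing $\s$ for the complementary $(B^3,2n)$-tangle in the ball $S^3-\mathrm{Int}(B^3)$, the diagram of $L=\T^\s$ is the union of the diagrams of $\T$ and $\s$ along the $2n$ boundary points. Recall from Section~\ref{subsection:kbsm} that the relative skein module $K(B^3,2n)$ is generated over $\laurent$ by the crossingless matchings of the $2n$ boundary points, and that deleting a closed loop from such a matching multiplies the corresponding skein element by $\delta=-A^2-A^{-2}$; hence $K(B^3,2n)$ is generated over $\laurent$ by the finitely many $2n$-Catalan tangles $\C_1,\dots,\C_k$.

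First I would resolve $\s$. Regarding the complementary ball together with its boundary points as a copy of $(B^3,2n)$, the diagram of $\s$ represents an element of $K(B^3,2n)$; applying the Kauffman relation $\langle D\rangle=A\langle D_0\rangle+A^{-1}\langle D_\infty\rangle$ at each crossing of $\s$ and then discarding trivial closed components at the cost of a factor $\delta$ apiece, I obtain
\[
\s=\sum_{i=1}^{k}c_i\,\C_i \quad\text{in } K(B^3,2n),\qquad c_i\in\laurent,
\]
where the $c_i$ record the powers of $A$ coming from the crossing resolutions and the powers of $\delta$ coming from the deleted loops.

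Next I would use the linearity of the closure operation. Gluing the fixed diagram of $\T$ onto a tangle $\s'$ in the complementary ball respects the Kauffman skein relations — a relation applied to $\s'$ in that ball is the same relation applied to the link diagram $\T^{\s'}$, and a trivial loop there is unknotted and unlinked from $\T$ in $S^3$, so deleting it costs exactly one factor $\delta$ — so $\s'\mapsto\langle\T^{\s'}\rangle$ descends to an $\laurent$-linear map $K(B^3,2n)\to K(S^3)\cong\laurent$. Applying this map to the expansion above gives
\[
\langle L\rangle=\langle\T^\s\rangle=\sum_{i=1}^{k}c_i\,\langle\T^{\C_i}\rangle ,
\]
and dividing by $\delta$ yields $\langle L\rangle'=\sum_{i}c_i\,\langle\T^{\C_i}\rangle'$. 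Since each $\langle\T^{\C_i}\rangle'$ is one of the defining generators of $I_\T$ and $I_\T$ is an ideal of $\laurent$, we conclude $\langle L\rangle'\in I_\T$, as required.

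I expect the substantive input is entirely in the skein-theoretic facts of the first two steps: that resolving crossings pushes any tangle diagram into the $\laurent$-span of the (finitely many) Catalan tangles, and that the closure is a genuine $\laurent$-linear functional on $K(B^3,2n)$ rather than merely a function on diagrams. These follow from the standard invariance properties of the Kauffman bracket, and the one point to be careful about is the bookkeeping for the trivial closed loops created during resolution — verifying that each contributes exactly one factor of $\delta$ to $\langle L\rangle$, so that removing them only changes the coefficients $c_i$ and not membership in $I_\T$. For this implication one needs only that the Catalan tangles span $K(B^3,2n)$; their $\laurent$-linear independence is not used here, though it is what makes $I_\T$ a well-defined finitely generated ideal.
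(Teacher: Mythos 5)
Your argument is correct and is essentially the same one the paper sketches in the discussion immediately following the theorem statement: expand the complementary tangle $\s$ in the spanning set of $2n$-Catalan tangles and use the $\laurent$-bilinearity of the closure pairing $\langle\ ,\ \rangle:K(B^3,2n)\times K(B^3,2n)\to K(S^3)=\laurent$ to write $\langle L\rangle'$ as a $\laurent$-combination of the generators $\langle\T^{\C_i}\rangle'$. The paper itself defers the formal proof to \cite{psw}, but your write-up fills in exactly the skein-theoretic reading the paper describes, with the loop-removal bookkeeping handled correctly.
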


In the case where $2n = 4$, there are only two Catalan tangles and thus $I_\T$ is generated by the reduced Kauffman bracket polynomials of the two tangles in Fig. \ref{fig:numdenom}.  These are the numerator $n(\T)$ and denominator $d(\T)$ closures of $\T$.

\begin{figure}
\centering
\subfloat[][$n(\T)$]{
\includegraphics[width=1.2in]{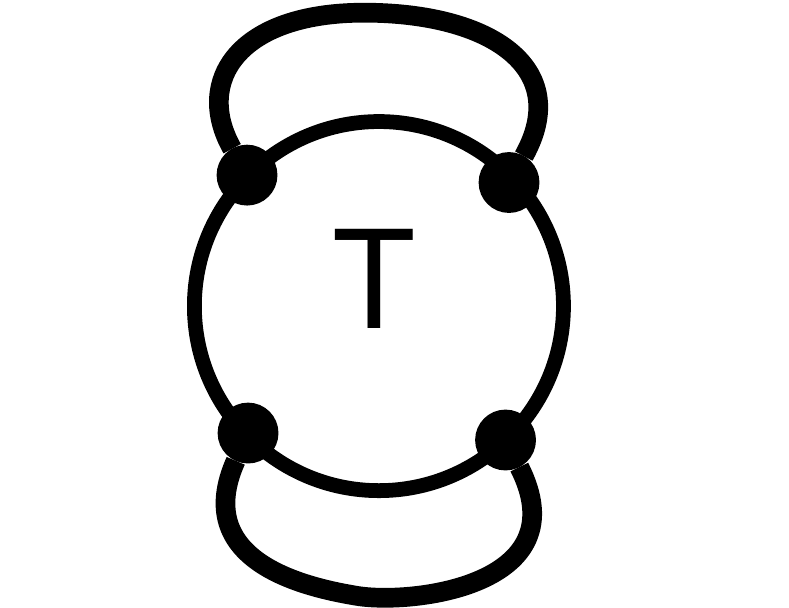}
\label{fig:num}}
\qquad\qquad
\subfloat[][$d(\T)$]{
\includegraphics[height=.65in]{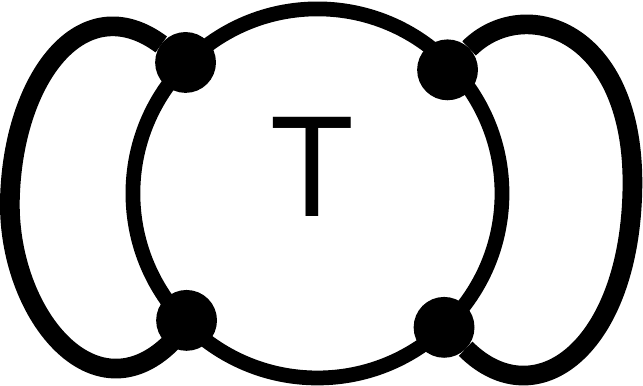}
\label{fig:denom}}
\caption{The numerator $n(\T)$ and the denominator $d(\T)$ of  a $(B^3,4)$-tangle $\T$.}\label{fig:numdenom}
\end{figure}

In \cite{psw}, it is noted that Theorem \ref{thm:psw} may be viewed in a skein theoretic light.  Recall that any closure of a $(B^3,2n)$-tangle $\T$ can be viewed as the union of $\T$ and a complementary $(B^3,2n)$-tangle $\s$ along their boundary points.  We may view both $\T$ and $\s$ as elements of the relative Kauffman bracket skein module $K(B^3,2n)$.  Then we can describe the closure of $\T$ by $\s$ in terms of a symmetric bilinear pairing  $\langle \text{ , } \rangle:K(B^3,2n) \times K(B^3,2n) \rightarrow K(S^3) = \laurent$ defined as follows:
\[\left\langle\text{ } \begin{minipage}{.6in}\includegraphics[width=.5in]{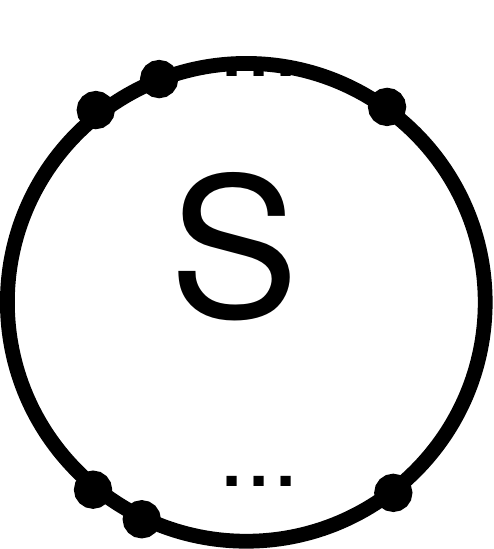}\end{minipage}, \begin{minipage}{.6in}\includegraphics[width=.5in]{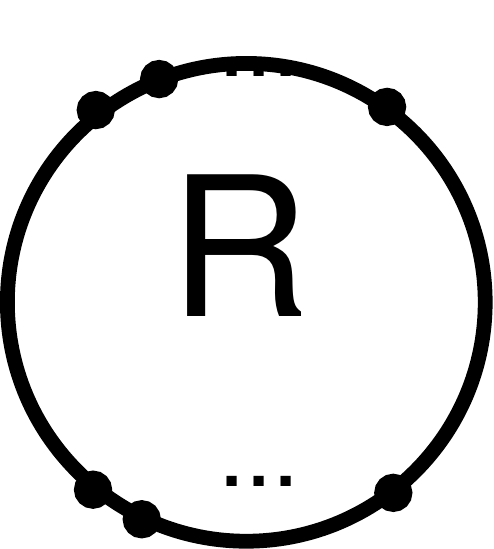}\end{minipage}\right\rangle = \left\langle \begin{minipage}{1.3in}\includegraphics[width=1.25in]{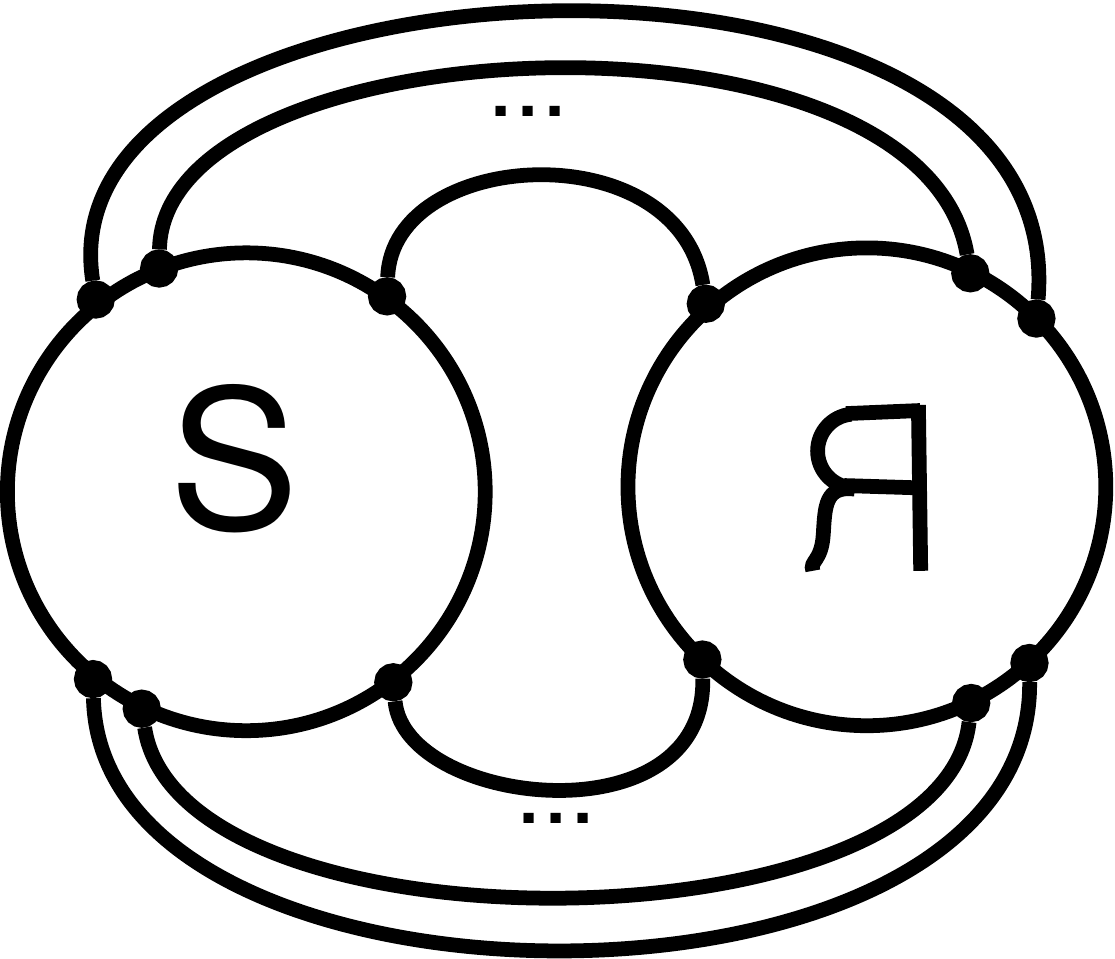}\end{minipage}\right\rangle.\]
Any closure of $\T$ may be written as $\langle \T, \s \rangle$ for some $(B^3,2n)$-tangle $\s$.  Since the set of all $2n$-Catalan tangles forms a basis for $K(B^3,2n)$, we see that any such tangle $\s$ can be written as a linear combination of Catalan tangles. So the ideal $I_\T$ is generated by pairings $\langle \T, \C\rangle/\delta$ where $\C$ is a Catalan tangle.  Furthermore, this means that an equivalent way to think about $I_\T$ is as the ideal generated by the reduced Kauffman bracket polynomials of all closures of $\T$.

We generalize this ideal to $(M,2n)$-tangles.  Given an $(M,2n)$-tangle $\T$, let $I_\T$ denote the ideal of $\laurent$ generated by the reduced Kauffman bracket polynomials of all closures of $\T$.  We call this the Kauffman bracket ideal of $\T$.  Note that if $M =B^3$, this is the same ideal defined in Theorem \ref{thm:psw}.  If $I_\T = \laurent$, we refer to $I_\T$ as a trivial ideal.  The following proposition is an immediate consequence of the definition.

\begin{proposition}\label{prop:kbideal}
If an $(M,2n)$-tangle $\T$ embeds in a link $L \subset S^3$, then $\langle L \rangle^\prime \in I_\T$.
\end{proposition}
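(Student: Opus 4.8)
The plan is to unwind the definitions; the content here is purely bookkeeping, so there is essentially no real obstacle to overcome. First I would recall the two relevant definitions from Section~\ref{intro}: the $(M,2n)$-tangle $\T$ embeds in $L$ precisely when there is a $1$-manifold with $2n$ boundary components in $S^3 - \mathrm{Int}(M)$ whose union with $\T$ along their common boundary points is a link in $S^3$ isotopic to $L$, and in that situation $L$ is by definition a closure of $\T$. Separately, $I_\T$ is defined to be the ideal of $\laurent$ generated by the reduced Kauffman bracket polynomials $\langle L' \rangle^\prime$ as $L'$ ranges over all closures of $\T$. Hence, once one observes that $L$ is itself a closure of $\T$, the polynomial $\langle L \rangle^\prime$ is literally one of the generating elements of $I_\T$, and therefore $\langle L \rangle^\prime \in I_\T$.

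The only points worth a sentence of care concern well-definedness. Since $2n \geq 2$, every closure of $\T$ is a non-empty link (it contains the arcs of $\T$), so as noted in Section~\ref{subsection:kbsm} its Kauffman bracket is a multiple of $\delta = -A^2 - A^{-2}$, and thus $\langle L \rangle^\prime = \langle L \rangle / \delta$ indeed lies in $\laurent$. Moreover, although $\langle \cdot \rangle$ is computed from a diagram, changing the diagram of a fixed link type (or changing the choice of completing $1$-manifold within the isotopy class of $L$) alters the bracket only by a unit of $\laurent$, so the assertion $\langle L \rangle^\prime \in I_\T$ does not depend on these choices. With these remarks the argument is complete; the step I expect to be the ``main obstacle'' is really just making sure nothing has been smuggled in here, and nothing has.

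As an alternative presentation, one could mirror the $M = B^3$ discussion: view $\T$ and the completing $1$-manifold as elements of a relative Kauffman bracket skein module, express $L$ as the value of the bilinear pairing on this pair, and then deduce membership in $I_\T$ from the expansion of the second factor in a suitable spanning set. For this proposition, however, the direct route above is the shortest and I would use it.
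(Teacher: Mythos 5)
Your argument is correct and is exactly what the paper has in mind: the paper states the proposition is ``an immediate consequence of the definition,'' and your unwinding of the definitions (that $L$ is itself a closure of $\T$, so $\langle L \rangle^\prime$ is one of the generators of $I_\T$) is precisely that argument, with some additional well-definedness remarks the paper leaves implicit.
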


If $\T$ embeds in the unknot, then $I_\T$ is trivial since the reduced Kauffman bracket polynomial of the unknot is one.  So, Proposition \ref{prop:kbideal} gives an obstruction tangle embedding; if $I_\T$ is non-trivial, then $\T$ does not embed in the unknot.

Our main concern in this paper is applying this obstruction to $(\storus, 2)$-tangles, which we refer to as genus-1 tangles.  We apply it first to Krebes's genus-1 tangle $\A$ in Fig. \ref{fig:krebesexample}. A brief examination shows that both the figure-eight knot and a $-1$-framed trefoil are closures of $\A$,  so $f = A^{-8} -A^{-4}+1-A^4+A^8$ and $g = A^{-8} +1-A^4$ are two generators of $I_\A$.  A short computation shows that $A^{-4}f + (1-A^{-4})g = 1$, and thus $I_\A$ is trivial.  So Proposition \ref{prop:kbideal} does not provide an obstruction to Krebes's example embedding in the unknot.

Obviously, we cannot always compute the Kauffman bracket ideal of a genus-1 tangle by simply examining some number of closures as we did with Krebes's tangle since the ideal has infinitely many generators by definition. One can give a finite list of generators for the ideal of a $(B^3,2n)$-tangle because the Catalan tangles are a finite basis for the relative Kauffman bracket skein module $K(B^3, 2n)$.  We generalize this method to the case of genus-1 tangles.  For this we must consider the relative Kauffman bracket skein module $K(\storus, 2)$ which is infinite dimensional.  Nevertheless, we outline an algorithm for finding an explicit finite list of generators for the Kauffman bracket ideal $I_\G$ of any genus-1 tangle $\G$. 

We use two bases for the Kauffman bracket skein module of $\storus$ relative to two points on the boundary.  The first basis is for the skein module over the ring $\laurent$ localized by inverting the quantum integers, and involves banded trivalent graphs.  We discuss banded trivalent graphs and define this basis in Section \ref{section:graphbasis}.  Gilmer  discussed this type of basis for a handlebody with colored points in a course on quantum topology in the fall of 2001. It is the generic version of the basis discussed in \cite[Theorem 4.11]{bhmv2}.  The second basis is for the skein module over $\laurent$ and is related to the orthogonal basis $\{Q_n\}$ defined in \cite{bhmv92}.  We discuss this basis in Section \ref{section:almostorthbasis}.

Then in Section \ref{sec:fingen}, we outline an algorithm for finding a finite list of generators for the Kauffman bracket ideal $I_\G$ of any genus-1 tangle $\G$.  

In Section \ref{section:1057}, we use this method to show that Proposition \ref{prop:kbideal} does provide an obstruction for the genus-1 tangle $\f$ pictured in Fig. \ref{fig:1057}, and we prove the following theorem.

\begin{theorem}\label{thm:1057}
 The Kauffman bracket ideal $I_\f$ of $\f$  is non-trivial.  In fact, $I_\f= \langle 11, 4 - A^4 \rangle$.  If a link $L \subset S^3$ is a closure of $\f$ and $J_L(\sqrt{t})$ is the Jones polynomial of $L$, then $J_L(\sqrt{t})|_{\sqrt{t}=5} = 0 \pmod{11}$.
\end{theorem}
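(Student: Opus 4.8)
The plan is to run the finite-generation algorithm of Section~\ref{sec:fingen} on the specific genus-1 tangle $\f$ of Fig.~\ref{fig:1057}. First I would expand $\f$ as an element of the relative Kauffman bracket skein module $K(\storus,2)$ in the basis of Section~\ref{section:almostorthbasis} (the one defined over $\laurent$ itself, so that all coefficients remain honest Laurent polynomials rather than landing in the localized ring used in Section~\ref{section:graphbasis}). Concretely, I would resolve the crossings of the diagram for $\f$ using the Kauffman bracket skein relation and collect terms, obtaining a finite expression $\f=\sum_i c_i(A)\,e_i$ with $c_i(A)\in\laurent$.

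Next, every closure of $\f$ is obtained by completing $\f$ by a $1$-manifold with two boundary points lying in the complementary solid torus $S^3-Int(\storus)$, which is itself an element of the relative skein module of a solid torus relative to two boundary points. Hence $I_\f$ is the ideal generated by the reduced pairings $\langle \f,f_j\rangle/\delta$ of $\f$ against the basis elements $f_j$ of the complementary skein module. The structural point furnished by Section~\ref{sec:fingen} is that the matrix of this pairing between the two bases is nearly diagonal (this is exactly where the near-orthogonality of the basis is used), so only finitely many indices $j$ yield new generators and all remaining pairings are $\laurent$-combinations of these; this produces an explicit finite generating set for $I_\f$.

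I would then simplify that generating set by hand: reduce each listed generator modulo $11$ and divide by $4-A^4$ to see that it lies in $\langle 11,4-A^4\rangle$, and conversely exhibit $11$ and $4-A^4$ as explicit $\laurent$-combinations of the generators, yielding $I_\f=\langle 11,4-A^4\rangle$. Non-triviality is then immediate: the ring homomorphism $\laurent\to\Z/11$ with $A\mapsto 3$ sends $11\mapsto 0$ and $4-A^4\mapsto 4-3^4=4-81\equiv 0$, so $I_\f$ is contained in a proper ideal and is therefore proper. For the last assertion, if $L$ is any closure of $\f$ then $\langle L\rangle'\in I_\f$ by Proposition~\ref{prop:kbideal}, so $\langle L\rangle'|_{A=3}\equiv 0\pmod{11}$; since the Jones polynomial $J_L(\sqrt t)$ equals $(-A)^{-3w(L)}\langle L\rangle'$ under the substitution $\sqrt t=A^{-2}$ and $(-A)^{-3w(L)}$ is a unit modulo $11$ while $A=3$ corresponds to $\sqrt t=3^{-2}\equiv 5\pmod{11}$, we conclude $J_L(\sqrt t)|_{\sqrt t=5}\equiv 0\pmod{11}$.

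The step I expect to be the real obstacle is the explicit skein-theoretic computation: carrying out the crossing resolutions to get the expansion of $\f$ in the chosen basis, evaluating enough of the pairings against the complementary basis, and keeping all the normalization conventions consistent so that the generators come out as the integral Laurent polynomials one needs before they can be reduced to the compact form $\langle 11,4-A^4\rangle$.
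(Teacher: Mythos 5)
Your overall strategy is the paper's: realize every closure of $\f$ as a bilinear pairing of $\f$ with an element of the relative skein module of the complementary solid torus, expand that complementary element in the $\laurent$-basis $\{x_n,y_n\}$ so that $I_\f$ is generated by the pairings $\langle\f,x_j\rangle/\delta$ and $\langle\f,y_j\rangle/\delta$, reduce the resulting finite list to $\langle 11,4-A^4\rangle$ by a Groebner-type computation, and deduce non-triviality and the Jones evaluation from the homomorphism $A\mapsto 3$ into $\Z/11$ (your last paragraph is essentially verbatim the paper's argument, including $\sqrt t=3^{-2}\equiv 5$). Where you diverge is in the bookkeeping that produces the finite list: the paper expands $\f$ in the \emph{graph basis} $g_{i,\vareps}$ of $K_R(\storus,2)$, computing the coefficients $c_{i,\vareps}$ via the orthogonal doubling pairing and recoupling formulas (Appendix A), and gets finiteness from Proposition \ref{prop:basespairings}, which says $\langle g_{i,\vareps},x_j\rangle=\langle g_{i,\vareps},y_j\rangle=0$ for $j>i$; you instead propose to expand $\f$ directly in $\{x_n,y_n\}$ by brute-force crossing resolution and to get finiteness from the bandedness of the self-pairing matrix in Lemma \ref{basisorth}. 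Your route is legitimate and more self-contained (it bypasses the graph basis, the localized ring $R$, and the doubling pairing entirely), at the cost of a $2^{10}$-state resolution plus the triangular change of basis from powers of $z$ to the $Q_n$; the paper's route buys an orthogonal expansion whose coefficients are computed term by term and whose vanishing pattern ($c_{i,\vareps}=0$ unless $i=1,3$) is visible from admissibility before any heavy computation. Be aware that, as written, your proposal defers the entire content of the identity $I_\f=\langle 11,4-A^4\rangle$ to the unexecuted computation you flag at the end; that computation (eight explicit generators and a Groebner basis over $\mathbb{Z}[A]$, followed by the easy passage between $\mathbb{Z}[A]$- and $\laurent$-ideals) is exactly what the paper supplies in Section \ref{section:1057} and Appendices A--B, and the theorem is not proved without it.
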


\begin{figure}
\includegraphics[height=1.5in]{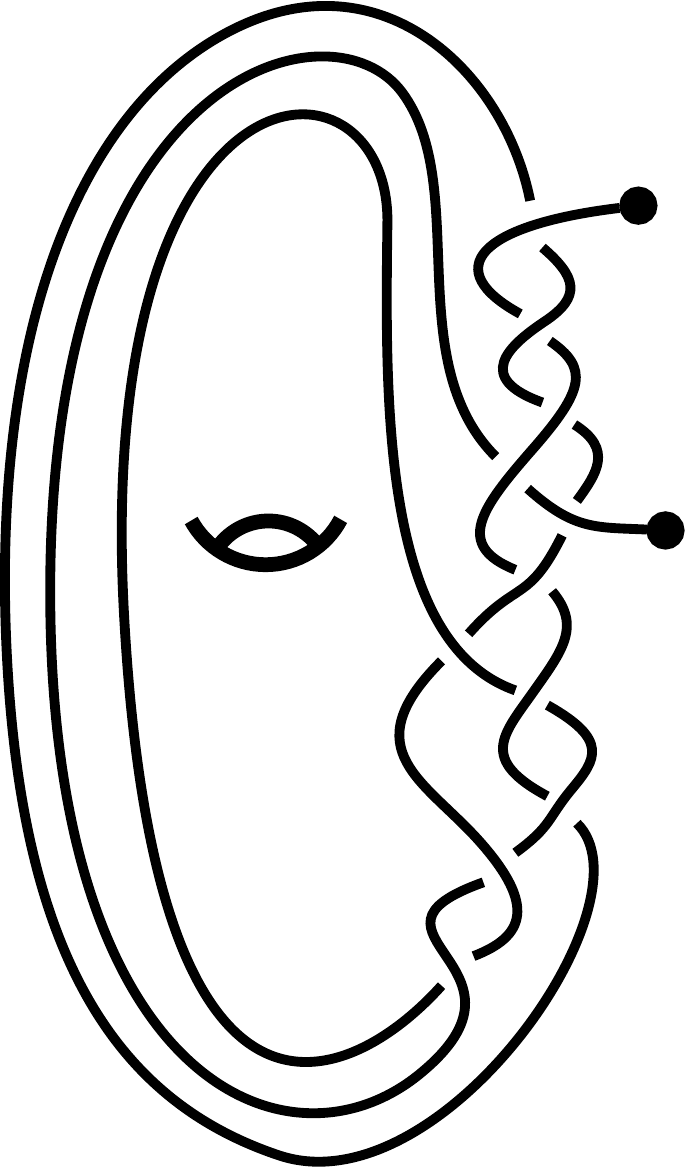}
 \caption{The genus-1 tangle $\f$.}\label{fig:1057}
\end{figure}

Of course, one could easily give an example where the Kauffman bracket ideal is non-trivial because the genus-1 tangle contains a local knot or has a $(B^3,4)$-subtangle with non-trivial ideal.  The genus-1 tangle $\f$ contains no local knots and does not appear to have any $(B^3,4)$-subtangles with non-trivial Kauffman bracket ideals.  To find this example, we used the concept of partial closures, which we discuss in Section \ref{section:partialclosures}.

The partial closure of a $(B^3,2n)$-tangle $\T$ is the genus-1 tangle obtained from $\T$ by gluing a copy of $D^2 \times I$ containing $n-1$ properly embedded arcs to $B^3$ as indicated in Fig. \ref{fig:partialclosure}.  We denote the partial closure by $\hat \T$.  

If a $(B^3,4)$-tangle consists of exactly two arcs embedded in $B^3$, then its partial closure either has a single component (if the partial closure joins boundary points from the two different arcs) or two components (if the partial closure joins boundary points of the same arc).  If it has a single component, then we have the following surprising result which we prove in Section \ref{section:partialclosures}.

\begin{theorem}\label{thm:partialclosures}
Let $\T$ be a $(B^3,4)$-tangle and let $\hat \T$ denote the genus-1 tangle which is the partial closure of $\T$.  If $\hat \T$ has a single component, then $I_{\hat\T} = I_\T$.
\end{theorem}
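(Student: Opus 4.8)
The plan is to show the two ideals are equal by showing that each is contained in the other, where the nontrivial inclusion $I_{\hat\T} \subseteq I_\T$ is the one that uses the single-component hypothesis. For the easy inclusion $I_\T \subseteq I_{\hat\T}$: any closure of $\T$ in $S^3$ can be regrouped as a closure of $\hat\T$. Indeed, the exterior $(B^3,4)$-tangle $\s$ used to close $\T$ contains, in particular, a pair of arcs; we may cut a product neighborhood $D^2 \times I$ out of the closure region that realizes the partial-closure gluing arc, so that $L = \T^\s$ is simultaneously presented as a closure of $\hat\T$ in the solid torus $B^3 \cup (D^2\times I)$. Hence every generator $\langle \T^\s\rangle'$ of $I_\T$ already appears as the reduced bracket of a closure of $\hat\T$, giving $I_\T \subseteq I_{\hat\T}$.

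For the reverse inclusion I would argue at the level of the skein-theoretic description of the ideals. A closure of $\hat\T$ is obtained by gluing a $1$-manifold $R$ in the exterior solid torus $S^3 - \mathrm{Int}(S^1\times D^2)$ to the two boundary points of $\hat\T$. Undoing the partial closure, $\hat\T = \T$ with a fixed arc $\alpha$ (running through $D^2\times I$) attached; so a closure of $\hat\T$ is the same as a closure of $\T$ by the exterior $(B^3,4)$-tangle $\s := \alpha \cup R'$, where $R'$ is $R$ pushed into the $B^3$-exterior. The key point is that because $\hat\T$ has a single component, the arc $\alpha$ joins boundary points lying on two \emph{different} arcs of $\T$; consequently, for \emph{every} choice of exterior $1$-manifold $R$, the object $\alpha \cup R'$ is a legitimate crossingless-or-not $(B^3,4)$-tangle in the exterior ball — i.e., it never produces a separate closed loop forced by $\alpha$ alone, and every closure of $\hat\T$ is genuinely realized as $\langle \T, \s\rangle$ for an honest exterior tangle $\s$. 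Expanding $\s$ in the Catalan basis of $K(B^3,4)$ (which, by Theorem~\ref{thm:psw} and the surrounding discussion, generates $I_\T$ via $\langle \T, n(\T)\rangle'$ and $\langle\T, d(\T)\rangle'$), we conclude $\langle \hat\T^R\rangle' = \langle \T,\s\rangle' \in I_\T$. Taking these over all $R$ gives $I_{\hat\T}\subseteq I_\T$.

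To make the middle step precise I would set up the relative skein modules carefully: $\hat\T \in K(S^1\times D^2, 2)$ and $\T \in K(B^3,4)$, with the partial-closure operation realized by a module map $K(B^3,4)\to K(S^1\times D^2,2)$ induced by the inclusion $B^3 \hookrightarrow S^1\times D^2$ that fills in $D^2\times I$ with the $n-1=1$ standard arc(s). Closing $\hat\T$ by an exterior $R$ factors through this map precisely because the exterior of the solid torus sits inside the exterior of the ball once the product region is removed; this is where I would draw the picture from Fig.~\ref{fig:partialclosure} and check that the gluing arcs match up. The single-component condition is exactly what guarantees the factorization does not collapse (no spurious unknotted circle is created), so that the set of closures of $\hat\T$ maps \emph{onto} a generating set for $I_\T$ rather than merely into it.

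The main obstacle I anticipate is verifying that the correspondence between exterior $1$-manifolds $R$ for $\hat\T$ and exterior $(B^3,4)$-tangles $\s$ for $\T$ is a genuine bijection respecting which link is produced — in particular ruling out the possibility that passing from the ball exterior to the solid-torus exterior loses some closures (which would only give one inclusion) or, conversely, that the single-component hypothesis is actually needed somewhere more subtle than the "no spurious loop" point above. Handling the bookkeeping of how the fixed arc $\alpha$ interacts with the Catalan-basis expansion — especially the reductions that introduce powers of $\delta$ and whether these survive after dividing by $\delta$ — is the step I would be most careful about; everything else is a matter of chasing definitions through the two skein modules.
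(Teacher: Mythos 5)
You have the two inclusions the wrong way round, and the one you dismiss as easy is the one that fails. The genuinely easy inclusion is $I_{\hat\T}\subseteq I_\T$: any closure of $\hat\T$ by an exterior $1$-manifold $R$ is literally a closure of $\T$ by the exterior tangle $\alpha\cup R$, and this requires no hypothesis on the number of components --- even if $\alpha$ joined two endpoints of the same arc of $\T$ and created a closed loop, the result would still be a closure of $\T$, so your ``no spurious loop'' concern is a red herring and the single-component hypothesis is not used there. The inclusion you dispose of in one paragraph, $I_\T\subseteq I_{\hat\T}$, is where all the content lies, and your argument for it is wrong: a general closure of $\T$ cannot be ``regrouped'' as a closure of $\hat\T$, because the partial-closure arc $\alpha$ joins a \emph{fixed} pair of endpoints in a fixed position, and the exterior tangle of, for instance, the numerator closure $n(\T)$ contains no arc isotopic to $\alpha$. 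Indeed, Lemma~\ref{lemma:partialclosure} computes $I_{\hat\T}=\langle\, \langle d(\T)\rangle^\prime,\ (1-A^{-4})\langle n(\T)\rangle^\prime\,\rangle$, which shows that closures of $\hat\T$ only ever reach $\langle n(\T)\rangle^\prime$ with a factor of $(1-A^{-4})$ attached; if your regrouping argument worked, $\langle n(\T)\rangle^\prime$ itself would lie in $I_{\hat\T}$ with no hypothesis at all and the single-component condition would be superfluous.

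The missing idea is how to strip off that factor, and this is exactly where the single-component hypothesis enters: it makes $d(\T)$ a knot, so $J_{d(\T)}(1)=1$ by Jones's theorem, whence $\langle d(\T)\rangle^\prime = A^{3\omega}(1-A^{-4})f(A^{-4})+A^{3\omega}$ for some Laurent polynomial $f$. Since $\langle d(\T)\rangle^\prime\in I_{\hat\T}$, the product $\langle d(\T)\rangle^\prime\langle n(\T)\rangle^\prime$ lies in $I_{\hat\T}$, and expanding it as $A^{3\omega}f(A^{-4})\cdot(1-A^{-4})\langle n(\T)\rangle^\prime + A^{3\omega}\langle n(\T)\rangle^\prime$ exhibits $A^{3\omega}\langle n(\T)\rangle^\prime$, hence $\langle n(\T)\rangle^\prime$, as an element of $I_{\hat\T}$. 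Without some argument of this kind --- and without first establishing the generators of $I_{\hat\T}$ by the inductive Catalan-basis expansion over the number of strands passing through the hole --- your proposal does not close.
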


\begin{figure}
\includegraphics[height=1.5in]{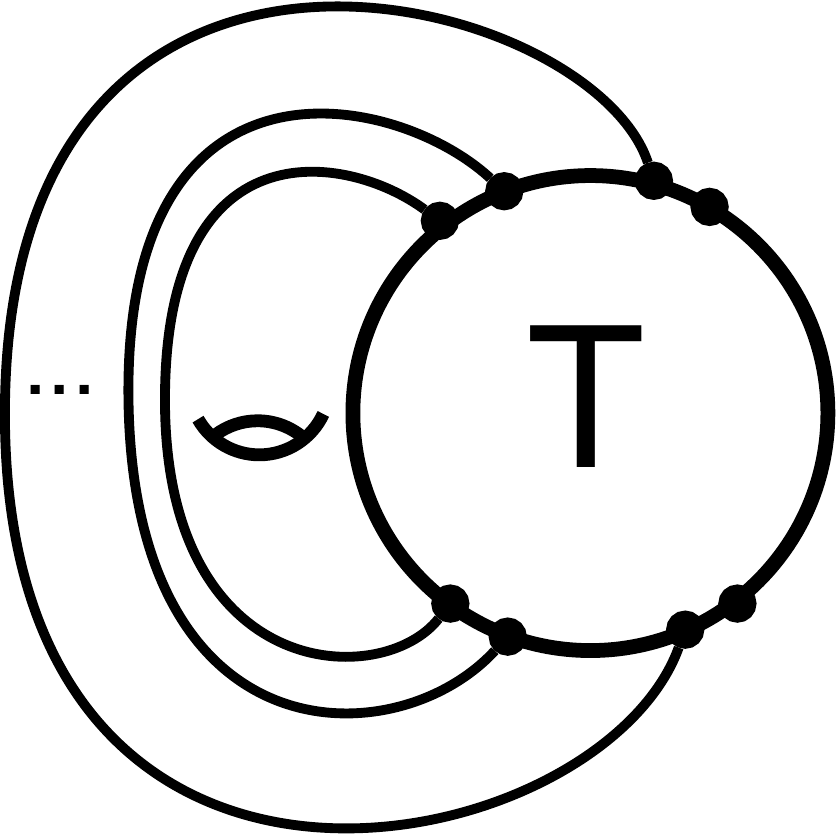}
 \caption{The partial closure $\hat \T$ of a $(B^3,2n)$-tangle $\T$.}\label{fig:partialclosure}
\end{figure}

This result influenced our search for an example of a genus-1 tangle with non-trivial Kauffman bracket ideal because any genus-1 tangle with one component which intersects some meridional disk of the solid torus exactly once can be viewed as the partial closure of a $(B^3,4)$-tangle.  Thus, its Kauffman bracket ideal can easily be computed using Theorem \ref{thm:psw}.  So, we should consider only those genus-1 tangles which intersect every meridional disk in the solid torus at least twice.  In particular, we considered partial closures of braids when looking for an example and used Mathematica to make our search more efficient.

Any braid $B$ on $n$ strands can be viewed as a $(B^3, 2n)$-tangle.  So, we can obtain a genus-1 tangle from $B$ by taking the partial closure of $B$.  Furthermore, certain closures of any genus-1 tangle obtained from a braid are easy to describe in Mathematica.

Since $B$ has an inverse element $B^{-1}$ in the braid group, it is easy to see that some closure of the $(B^3,2n)$-tangle consisting of $B$ concatenated with $B^{-1}$ is the unknot, and we have the following easy proposition.

\begin{proposition}\label{prop:braidideals}
For any $(B^3, 2n)$-tangle $B$ which is a braid, we have that $I_B = \laurent$.
\end{proposition}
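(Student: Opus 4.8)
The plan is to exhibit a single closure of the braid $B$ (viewed as a $(B^3,2n)$-tangle) whose reduced Kauffman bracket polynomial is $1$, and then invoke Proposition~\ref{prop:kbideal}. Concretely, the complementary tangle $\s$ will be chosen to be the $(B^3,2n)$-tangle given by $B^{-1}$ (the inverse word in the braid group), sitting in the exterior ball $S^3 - Int(B^3)$ and attached to $B$ along the $2n$ boundary points in the natural way so that the $i$-th top endpoint of $B$ is joined to the $i$-th top endpoint of $B^{-1}$ and likewise on the bottom. Since $B$ and $B^{-1}$ are inverse braids, concatenating them and then capping off with the identity-braid pattern yields the trivial $n$-component braid closure, which is the $n$-component unlink — but we want the unknot, so more care is needed in how the exterior arcs are routed.

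The cleaner route: take the exterior $1$-manifold to be the tangle $B^{-1}$ with its two sets of $n$ endpoints ``turned back'' so that the resulting closure $L$ of $B$ is the \emph{plat-type} or \emph{trace-type} closure of the word $B B^{-1}$. Because $B B^{-1}$ is isotopic (rel endpoints, as a braid) to the identity braid on $n$ strands, this closure $L$ is isotopic to the corresponding closure of the identity braid, and by choosing the connection pattern of the exterior arcs to be a single ``staircase'' path through all $n$ strands (rather than $n$ disjoint caps), $L$ is the unknot. Hence the unknot is a closure of $B$. Then $\langle \text{unknot}\rangle' = 1 \in I_B$, so $I_B = \laurent$ by Proposition~\ref{prop:kbideal} (or directly, since a unit lies in the ideal).

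The key steps, in order, are: (1) fix the braid $B$ on $n$ strands and write down its $2n$ boundary endpoints as a $(B^3,2n)$-tangle; (2) specify the exterior $1$-manifold built from $B^{-1}$ together with a connecting pattern that links the $2n$ free endpoints of $B B^{-1}$ into a single closed curve; (3) check that the resulting link $L$ is isotopic, via the braid relation $B B^{-1} \simeq \mathrm{id}$ carried out inside a collar of $B^3$, to the same closure of the trivial braid; (4) observe that this latter closure of the trivial braid is the unknot; (5) conclude $1 \in I_B$ hence $I_B = \laurent$.

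The main obstacle — really the only subtlety — is step (2)–(4): one must be a little careful that the chosen closure arcs for $BB^{-1}$ do genuinely produce a \emph{connected} curve (the unknot) and not a multi-component unlink, and that the isotopy collapsing $BB^{-1}$ to the identity braid can be performed without the exterior arcs interfering. Both points are handled by pushing the cancellation $B B^{-1}\simeq\mathrm{id}$ into a small ball disjoint from the closure arcs and by taking the closure pattern to be a single monotone path visiting all strands; no computation with the Kauffman bracket itself is needed, since once the unknot appears as a closure the conclusion is immediate.
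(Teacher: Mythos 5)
Your proposal is correct and matches the paper's argument, which likewise observes that concatenating $B$ with $B^{-1}$ yields the identity braid and that a suitable choice of closure arcs (a single connected path through all $n$ strands) then produces the unknot, so that $1 \in I_B$. Your write-up is in fact more careful than the paper's one-sentence justification about the connectedness of the closure and the location of the cancelling isotopy.
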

Furthermore, any subtangle of a braid $B$ also has trivial Kauffman bracket ideal.

We do not consider 2-stranded braids, since any 2-stranded braid can be viewed as a $(B^3,4)$-tangles and thus satisfies Theorem \ref{thm:partialclosures}.  Furthermore, it is easy to see that any 2-stranded braid embeds in either the unknot or the 2-component unlink, depending on whether the braid has an odd or even number of twists.  So we consider only partial closures of braids with at least three strands.

We wrote a Mathematica program using Bar-Natan's KnotTheory package \cite{bn} to make detecting potential examples easier.  It computes the ideal generated by certain closures of the partial closure of certain braids.  This notebook is available on the author's website \footnote{\texttt{https://math.lsu.edu/\~{ }sabern1}}.  It proceeds as follows. Given the $n$th knot with $m$ crossings, we obtain a braid representative $br[m,n]$ of the knot.  From $br[m,n]$, we obtain a genus-1 tangle by taking its partial closure $\mathcal{G}$.  We then examine some particular closures of $\G$.

\begin{figure}
\includegraphics[width=1in]{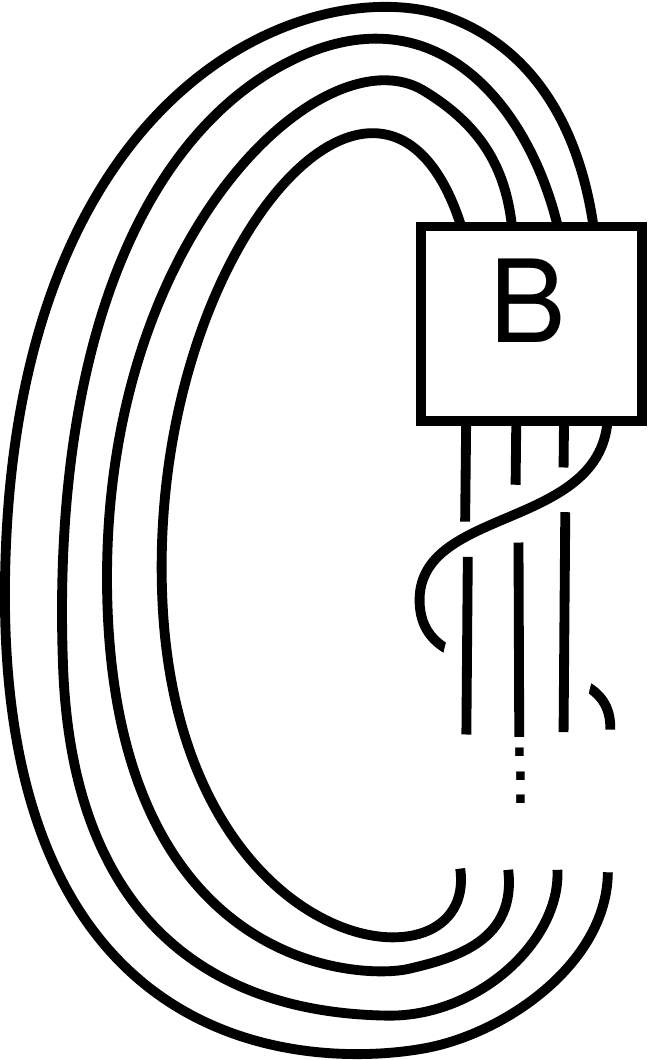}\hspace{.75in}\includegraphics[width=1in]{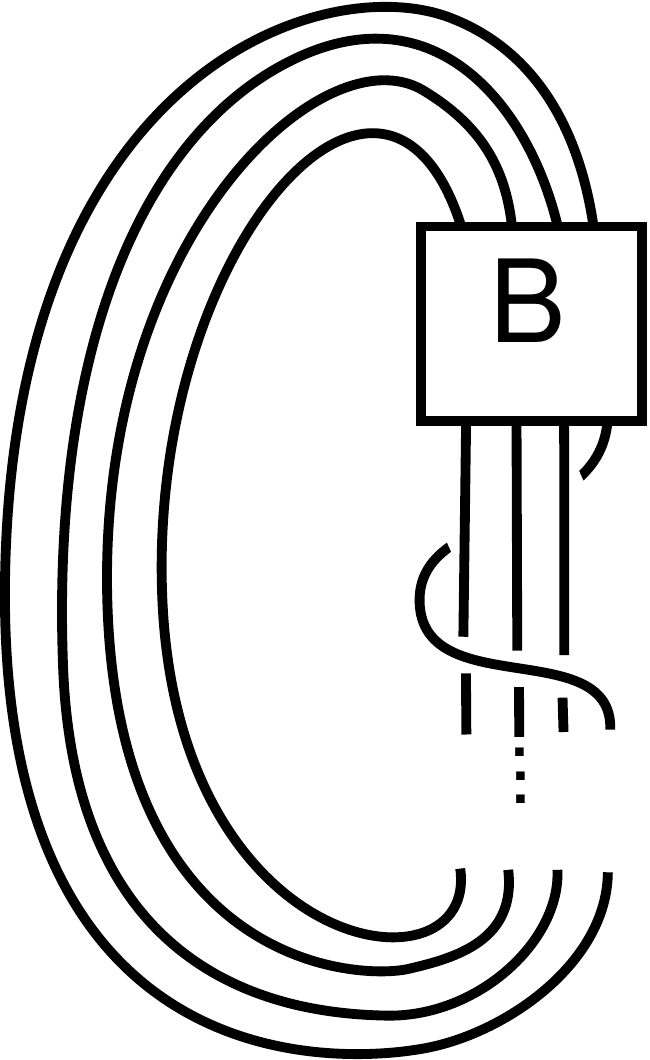}
\caption{Some closures of a genus-1 tangle obtained as the partial closure of a braid $B$ on four strands.}\label{fig:braidgens}
\end{figure}

The closures of $\mathcal{G}$ we consider are those in which the strand closing the tangle wraps around through hole $n$ of times either front to back or back to front, for some positive integer $n$, as in Fig. \ref{fig:braidgens}.  Such a closure can be viewed as the closure of the braid $br[m,n]$ concatenated $n$ times with one of the following braids:
\[P = \begin{minipage}{.6in}\includegraphics[width=.55in]{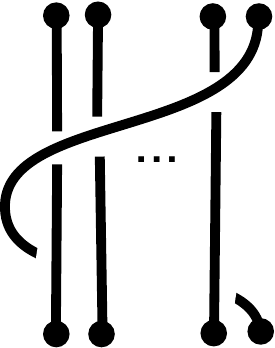}\end{minipage}\text{ or } N = \begin{minipage}{.6in}\includegraphics[width=.55in]{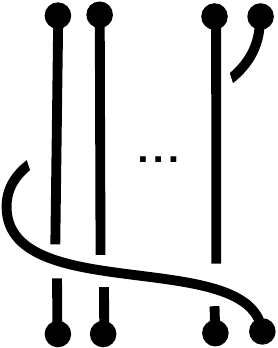}\end{minipage} .\]  We consider eleven closures of $\G$: $br[m,n]$ concatenated with each of $P$ and $N$ up to five times, along with $br[m,n]$ itself.

Our program then computes the Jones polynomials of these closures and rescales them as follows: if the smallest exponent of $t$ appearing in the Jones polynomial is negative, then we multiply the Jones polynomial by the power of $t$ necessary to make that smallest degree term a constant; if all exponents of $t$ in the Jones polynomial are positive, we do nothing.  These rescaled Jones polynomials lie in $\mathbb{Z}[t]$ and generate an ideal.  Our program computes a Groebner basis for this ideal.  The tangles for which this ideal was non-trivial formed our list of potential examples.

For a fixed integer $k$, our program does the computation described above for every knot up to 10 crossings whose braid representative has $k$ strands.  All knots whose braid representatives have three strands yielded a trivial Groebner basis.  However, the ideal was non-trivial for three knots whose braid representatives have four strands: $10_{57}$, $10_{117}$, and $10_{162}$.  We obtained the example $\f$ by taking the partial closure of the braid representative of the $10_{57}$ knot.  We chose $10_{57}$ because its braid representative has several twist regions which make the computation in Appendex \ref{app:linearcombo} slightly easier.

Now, a natural question is whether there exists a genus-1 tangle which is the partial closure of a braid on three strands (or more generally, a $(B^3, 6)$-tangle) with a trivial Kauffman bracket ideal but does not embed in the unknot.  Because our search resulted in no non-trivial examples for $k=3$, we must find another way to detect such an example.


\section{The Kauffman bracket and trivalent graphs}\label{section:graphbasis}

\subsection{Kauffman bracket skein modules}\label{subsection:kbsm}
First we recall the definition of the Kauffman bracket of a link diagram $D$.  The Kauffman bracket $\langle D \rangle$ of a link diagram $D$ is a polynomial in $\laurent$ given by the following relations, where $\delta = -A^{2} -A^{-2}$: 
\begin{enumerate}[(i)]
\item $\displaystyle \langle\begin{minipage}{.5in}\begin{center}\includegraphics[width=.4in]{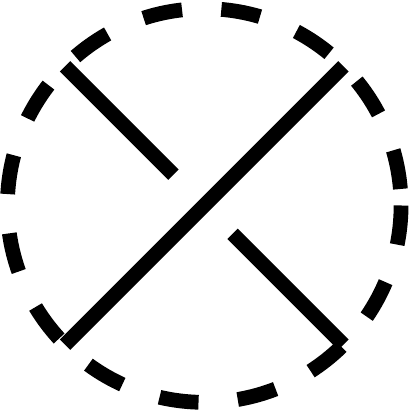}\end{center}\end{minipage} \rangle= A \langle\begin{minipage}{.5in}\begin{center}\includegraphics[width=.4in]{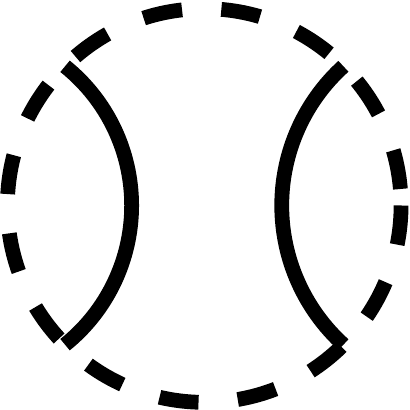}\end{center}\end{minipage} \rangle+ A^{-1}\langle\begin{minipage}{.5in}\begin{center}\includegraphics[width=.4in]{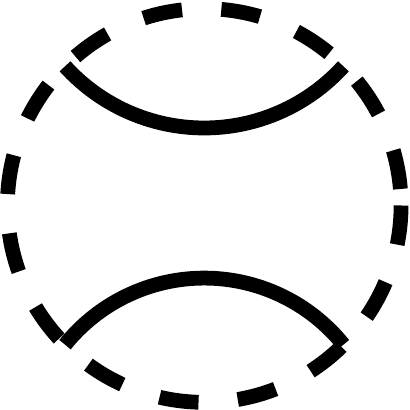}\end{center}\end{minipage} \rangle   $
\item $ \langle D^\prime \coprod \begin{minipage}{.4in}\begin{center}\includegraphics[width=.3in]{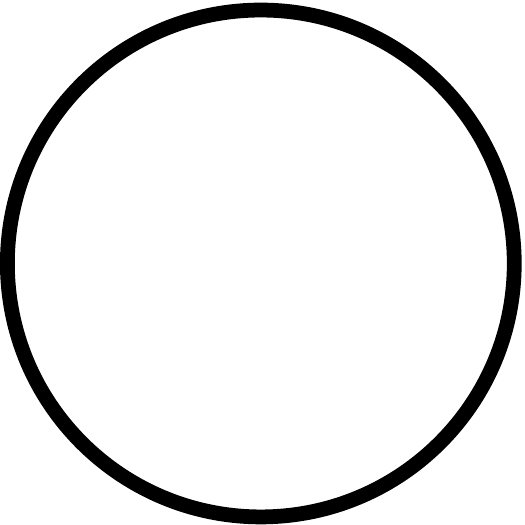}\end{center}\end{minipage} \rangle= \delta \langle D^\prime\rangle.$
\item $ \langle \begin{minipage}{.1in}\begin{center}\end{center}\end{minipage} \rangle= 1.$
\end{enumerate}
Furthermore, for any non-empty link $L$ and diagram $D$ of $L$, we define $\langle D \rangle ^\prime = \langle D\rangle/ \delta$ to be the reduced Kauffman bracket polynomial of $L$.

The Kauffman bracket skein module of a 3-manifold $M$, denoted by $K(M)$, is the $\laurent$-module generated by isotopy classes of framed links in $M$ modulo the Kauffman bracket relations.  Note that the isotopy class of the empty link is the identity in $K(M)$.

Given a 3-manifold $M$ with boundary and a set of $m$ framed points in $\partial M$, the relative Kauffman bracket skein module of $M$, denoted $K(M,m)$, is the $\laurent$-module generated by isotopy classes of framed links and arcs in $M$ which intersect $\partial M$ in the framed points.

Let $R$ denote $\laurent$ localized by inverting the quantum integers, $[k] =(A^{2n}-A^{-2n})/(A^2-A^{-2})$.  In addition to $K(M,m)$, we consider $K_R(M, m)$ the relative Kauffman bracket skein module of $M$ with coefficients in $R$.  When we refer to a skein element, we mean an element of $K_R(M,m)$.

We must make this distinction because when we compute the Kauffman bracket ideal of an $(M,2n)$-tangle, we are in fact using elements of and pairings defined on $K_R(M,2n)$ rather than $K(M,2n)$.  Since each 3-manifold $M$ we consider in this paper has the form $\Sigma \times I$ for some surface $\Sigma$, we have that $K_R(M,2n)$ is free on diagrams without crossings or contractable loops according to \cite[Theorem 3.1]{pr}.  Furthermore, according to \cite[Proposition 2.2]{pr}, we have that $K_R(M,2n) = K(M,2n) \otimes R$.  So we may view $K(M,2n)$ as a subset of $K_R(M,2n)$.


\subsection{Banded colored trivalent graphs}\label{subsection:graphs}

Recall that for each $n > 0$, the $n$th Temperley-Lieb algebra $TL_n = K_R(D^2 \times I, 2n)$ contains the $n$th Jones-Wenzl idempotent $f_n$ defined recursively as in Fig. \ref{idempdef}.  Here, $\Delta_n$ denotes the $n$th Chebyshev polynomial.  A small rectangle on an arc labelled $n$ represents the idempotent $f_n$.  For the rest of the paper, we drop the rectangles, and any arc labelled $n$ represents $n$ strands colored by $f_n$. 

\begin{figure}[h]
$\begin{minipage}{.3in}\includegraphics[height=.8in]{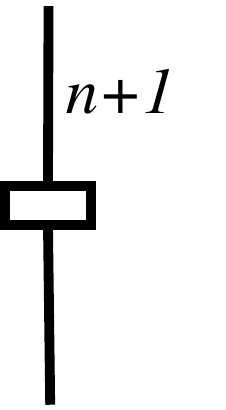}\end{minipage} = \hspace{.15in}\begin{minipage}{.4in}\includegraphics[height=.8in]{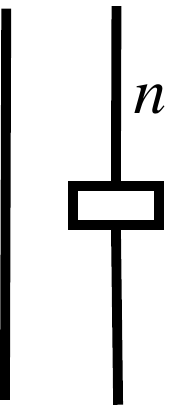}\end{minipage} - \ds\frac{\Delta_{n-1}}{\Delta_n} \hspace{.1in}\begin{minipage}{.4in}\includegraphics[height=.8in]{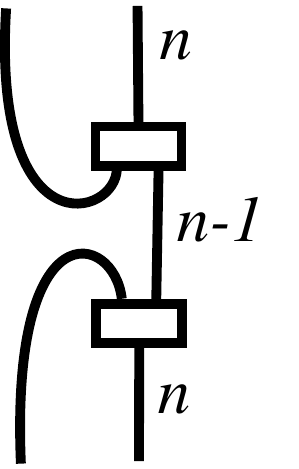}\end{minipage}$\\

\vspace{.2in}

where $\Delta_n = \hspace{.1in}\begin{minipage}{.9in}\includegraphics[height=.7in]{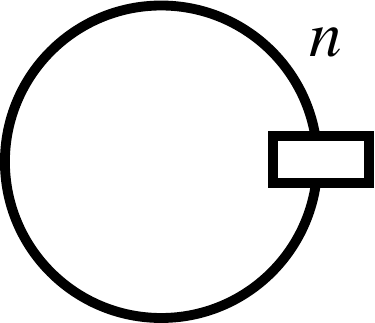}\end{minipage}$.
 \caption{Definition of the Jones-Wenzl idempotents.}\label{idempdef}
\end{figure}

A banded colored trivalent graph in a 3-manifold $M$ is a framed trivalent graph equipped with a cyclic orientation of the edges incident to each vertex.  The framing is given at the vertices by viewing each vertex as a disk with three bands attached (one for each edge).  Away from the vertices, the framing is simply the blackboard framing.

Additionally, each edge is colored by a non-negative integer $n$ which indicates the presence of the $n$th Jones-Wenzl idempotent.  For the rest of this paper, any unlabelled edge is assumed to be colored one.  At each vertex, the colors of the incident edges must form an admissible triple where admissibility is defined as follows.

\begin{definition}
 For non-negative integers $a$, $b$, and $c$, if $|a-b| \leq c\leq a + b$ and $a + b + c \equiv 0 (\text{mod } 2)$, then the triple $(a,b,c)$ is said to be admissible.
\end{definition}  In fact, such a vertex actually represents a linear combination of skein elements as in Fig. \ref{trivalentvertex}.   The inner colors, $i$, $j$, and $k$, must satisfy the following conditions: $i + j = a$, $i + k = b$, $j + k = c$.  For a more detailed treatment of the topic of banded colored trivalent graphs see \cite{kl,mv}.

\begin{figure}
$\begin{minipage}{1in}\includegraphics[width=1in]{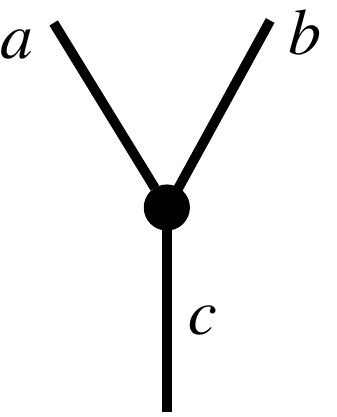}\end{minipage}\hspace{.25in} = \hspace{.25in} \begin{minipage}{1in}\includegraphics[width=1in]{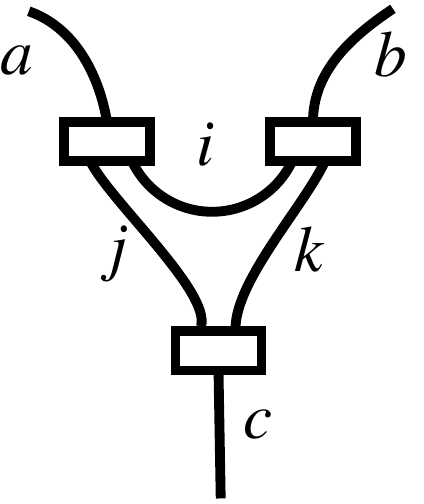}\end{minipage}$
 \caption{A trivalent vertex.}\label{trivalentvertex}
\end{figure}

We use the same notation as in \cite{gh} for the evalutations of two banded colored trivalent graphs that appear frequently:
 \[\begin{minipage}{.8in}\includegraphics[width=.8in]{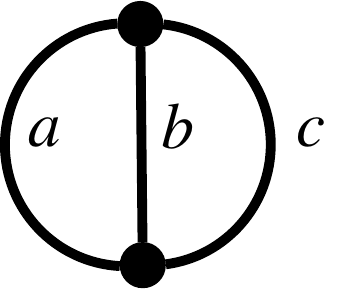}\end{minipage} = \theta (a,b,c) \text { and }\hspace{.1in} \begin{minipage}{1.1in}\includegraphics[width=1.1in]{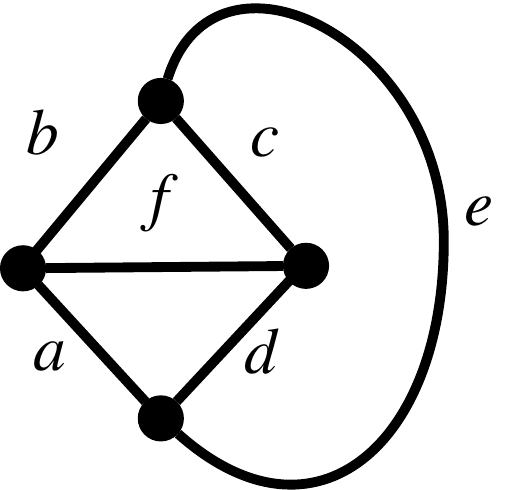}\end{minipage} = Tet \left[  \begin{array} {c c c} a & b & e\\ c & d & f \\ \end{array}\right].\]

 We use the following formulas when computing the Kauffman bracket ideal of a genus-1 tangle.  For details, see \cite{kl,mv}.

\begin{equation}\label{eq:bubble}
 \begin{minipage}{.7in}\includegraphics[height=1in]{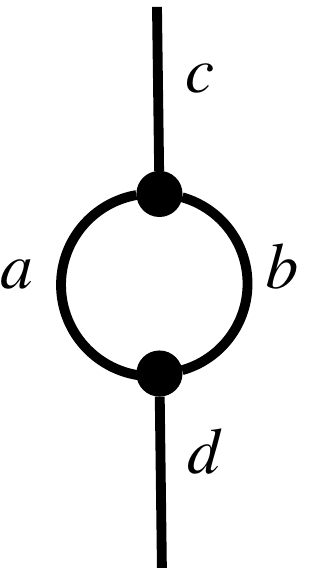}\end{minipage}=  \hspace{.1in} \delta^c_d \ds\frac{\theta(a,b,c)}{\Delta_c} \hspace{.25in}\begin{minipage}{1in}\includegraphics[height=1in]{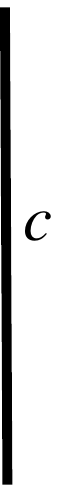}\end{minipage}
 \end{equation}

\begin{equation}\label{eq:lambdatwist}
\begin{minipage}{.7in}\includegraphics[height=.8in]{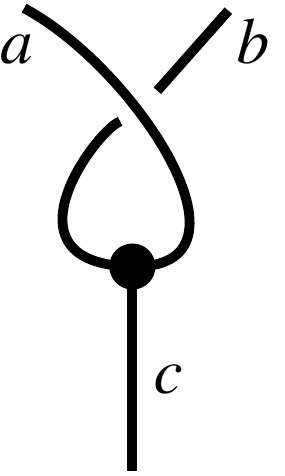}\end{minipage}=  \hspace{.1in} \lambda^{a \text{ }b}_c \hspace{.15in}\begin{minipage}{.8in}\includegraphics[height=.8in]{trivalentvertexbefore-eps-converted-to.pdf}\end{minipage}\text{ where }\lambda^{a \text{ }b}_c \text{ is as given in \cite{kl}}
 \end{equation}

\begin{equation}\label{eq:tetreduce}
\begin{minipage}{.7in}\includegraphics[height=.8in]{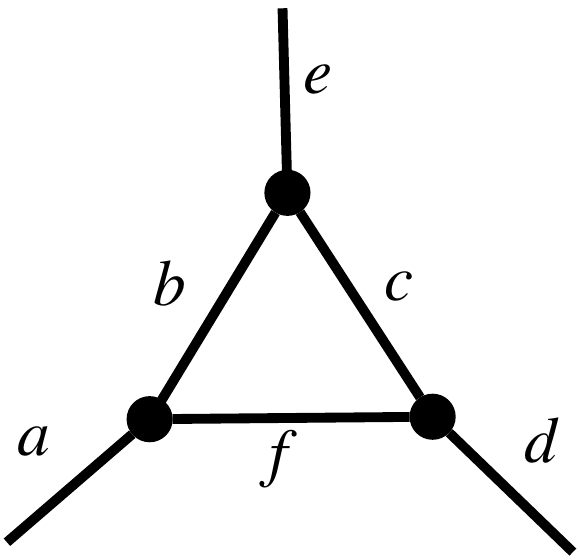}\end{minipage}=  \hspace{.1in}
 \left\{ \begin{array}{l l}
 \ds\frac{Tet \left[  \begin{array} {c c c} a & b & e\\ c & d & f \\ \end{array}\right]}{\theta(a,d,e)}\begin{minipage}{.9in}\includegraphics[height=.8in]{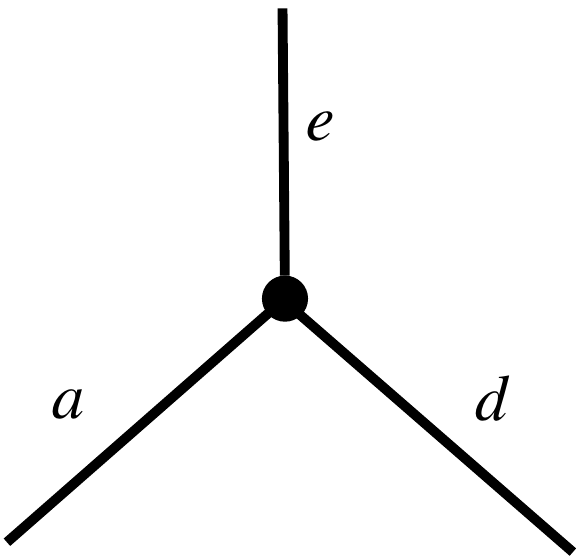}\end{minipage}, & \text{ if } (a,d,e) \text{ is admissible} \\
 0, & \text{otherwise}
 \end{array}\right.
  \end{equation}

\begin{equation}\label{eq:6j}
\begin{minipage}{1in}\includegraphics[height=.6in]{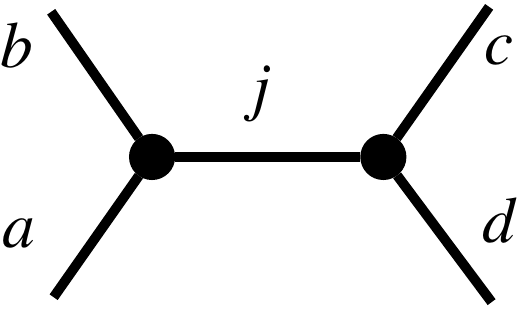}\end{minipage}=  \hspace{.1in} \ds\sum_i \left\{  \begin{array} {c c c} a & b & i\\ c & d & j \\ \end{array}\right\} \begin{minipage}{.6in}\includegraphics[height=1in]{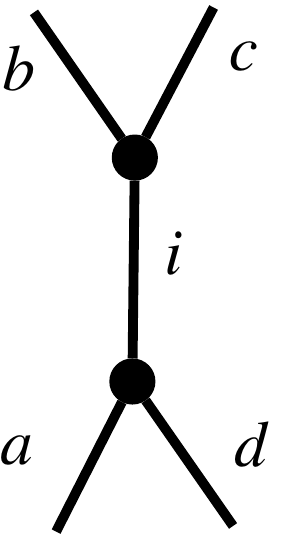}\end{minipage},
 \end{equation}
 where the sum is over all admissible values $i$  and $\left\{  \begin{array} {c c c} a & b & i\\ c & d & j \\ \end{array}\right\} = \ds\frac{Tet \left[  \begin{array} {c c c} a & b & i\\ c & d & j \\ \end{array}\right] \Delta_i}{\theta(a,d,i)\theta(b,c,i)}$

\begin{equation} \label{eq:removeloop}
\begin{minipage}{.6in}\includegraphics[height=.8in]{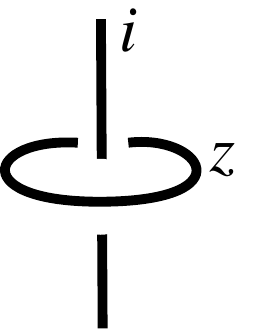}\end{minipage} =  \phi_i\text{   }\begin{minipage}{.3in}\includegraphics[height=.75in]{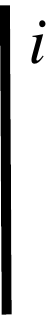}\end{minipage}\text{ where }\phi_i = -A^{2i+2}-A^{-2i-2}
 \end{equation}

\begin{theorem}[Fusion Formula]\label{thm:fusion}
$$\begin{minipage}{.5in}\includegraphics[height=.75in]{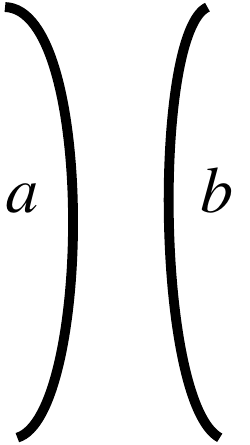}\end{minipage}=  \hspace{.1in} \ds\sum_i \frac{\Delta_i}{\theta(a,b,i)} \hspace{.1in}\begin{minipage}{1in}\includegraphics[height=.85in]{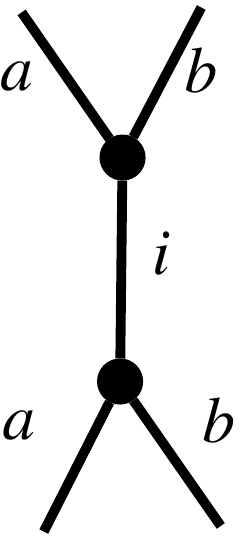}\end{minipage}$$
where the sum is over all $i$ such that $(a,b,i)$ is admissible.
\end{theorem}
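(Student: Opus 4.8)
The plan is to prove the Fusion Formula by applying the recoupling machinery already assembled in this section, specifically the trivalent-vertex expansion (Fig.~\ref{trivalentvertex}), the bubble identity \eqref{eq:bubble}, and the orthogonality/completeness of the Jones-Wenzl idempotents. The formula asserts that two parallel strands colored $a$ and $b$ (the left-hand diagram) can be rewritten as a sum over all admissible $i$ of the ``$H$''-shaped graph in which the $a$- and $b$-strands merge into a single edge colored $i$ via two trivalent vertices, each such term weighted by $\Delta_i/\theta(a,b,i)$. First I would set up the ambient skein module: the two parallel strands colored $a$ and $b$ represent the element $f_a \otimes f_b$ sitting inside the relative skein module $K_R(D^2\times I,\,2(a+b))$, and both sides of the claimed identity live in this module, so it suffices to verify the coefficient identity there.

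Next I would expand the right-hand side and contract it back to the left-hand side, which is the cleaner direction. Fix an admissible $i$ and consider the corresponding $H$-shaped term: gluing the two trivalent vertices along the $i$-colored edge and then closing up produces, by the bubble identity \eqref{eq:bubble}, a factor of $\theta(a,b,i)/\Delta_i$ together with the two parallel strands $f_a,f_b$ (this is the precise content of \eqref{eq:bubble} applied to the $(a,b,i)$-bubble). Thus the weight $\Delta_i/\theta(a,b,i)$ in the statement is exactly the reciprocal of the bubble coefficient, chosen so that each term, upon closing the $i$-edge, returns $f_a\otimes f_b$. The genuine work is to show that \emph{summing} these contributions over all admissible $i$ reproduces the identity element: this is the completeness relation for the Jones-Wenzl idempotents, namely that the images of the projectors $\frac{\Delta_i}{\theta(a,b,i)}(\text{projection onto the }i\text{-colored channel})$ sum to the identity on $f_a\otimes f_b$. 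Concretely, the maps sending $f_a\otimes f_b$ into the $i$-colored summand (via the admissible trivalent vertex) and back form a complete system of mutually orthogonal projections, by the standard decomposition of $f_a\otimes f_b$ into irreducible channels indexed precisely by the admissible $i$ with $|a-b|\le i\le a+b$ and $a+b+i$ even.

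The orthogonality half of this—that distinct channels $i\ne i'$ contribute nothing—follows from the vanishing clause in the tetrahedral reduction \eqref{eq:tetreduce}: a bubble built from an inadmissible triple is zero, and joining the $i$-channel vertex to the $i'$-channel vertex produces exactly such a vanishing configuration when $i\ne i'$. The completeness half—that the projections exhaust the identity—follows from the fact that the $i$-colored spaces $\mathrm{Hom}(f_i,\,f_a\otimes f_b)$ are one-dimensional for each admissible $i$ and that $f_a\otimes f_b$ decomposes as the direct sum of these over all admissible $i$; this dimension count is the defining property of the recoupling basis and can be cited from \cite{kl,mv}. I expect the main obstacle to be bookkeeping rather than conceptual: one must track the framing and cyclic-orientation data at each trivalent vertex carefully so that the bubble coefficient in \eqref{eq:bubble} is applied with the correct normalization $\theta(a,b,i)/\Delta_i$, and one must confirm that the normalization conventions for $\theta$ and $\Delta_i$ used here match those of the cited sources so that no spurious powers of $A$ or signs $\phi_i$ from \eqref{eq:removeloop} creep in. Once the single-channel reduction and the orthogonality of distinct channels are in hand, assembling them into the stated sum over admissible $i$ is immediate.
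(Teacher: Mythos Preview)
The paper does not supply its own proof of the Fusion Formula: it is stated as one of several standard recoupling identities, with the blanket citation ``For details, see \cite{kl,mv}'' covering the surrounding formulas. So there is nothing in the paper to compare your argument against directly.

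That said, your outline is essentially the standard argument one finds in those references. The key ingredients---that the trivalent vertices give maps between $f_a\otimes f_b$ and $f_i$ for each admissible $i$, that these assemble into mutually orthogonal idempotents by the bubble identity \eqref{eq:bubble}, and that completeness follows from the decomposition of $f_a\otimes f_b$ into one-dimensional channels indexed by admissible $i$---are exactly right. One small correction: your appeal to \eqref{eq:tetreduce} for the orthogonality of distinct channels is slightly off; the relevant vanishing is already contained in the $\delta^c_d$ in \eqref{eq:bubble} (a bubble with two different internal colors $i\ne i'$ is zero), not in tetrahedral inadmissibility. Otherwise the sketch is sound and matches the treatment in \cite{kl,mv}.
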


For details of the following theorem, see \cite{gh}.

\begin{theorem}\label{thm:gh}
 If a sphere intersects a skein element in exactly 2 labelled arcs, then
 $$\begin{minipage}{.65in}\includegraphics[height=.75in]{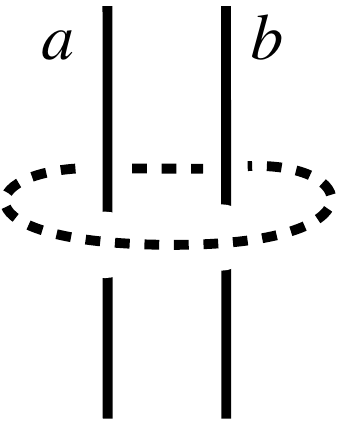}\end{minipage}=  \hspace{.1in} \ds\frac{\delta^a_b}{\Delta_a} \hspace{.1in}\begin{minipage}{.75in}\includegraphics[height=.85in]{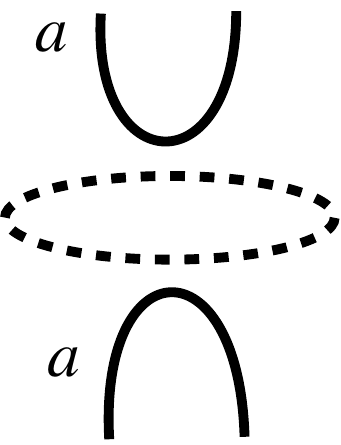}\end{minipage}.$$

If a sphere intersects a skein element in exactly 3 labelled arcs, then
 $$\begin{minipage}{.9in}\includegraphics[height=.8in]{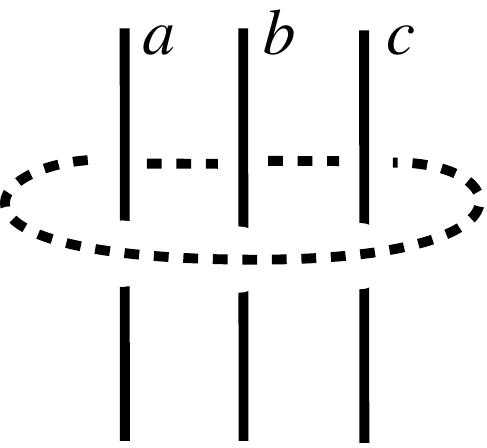}\end{minipage}=  \hspace{.1in}
  \left\{ \begin{array}{l l}
 \ds\frac{1}{\theta(a,b,c)}   \hspace{.1in} \begin{minipage}{.65in}\includegraphics[height=.9in]{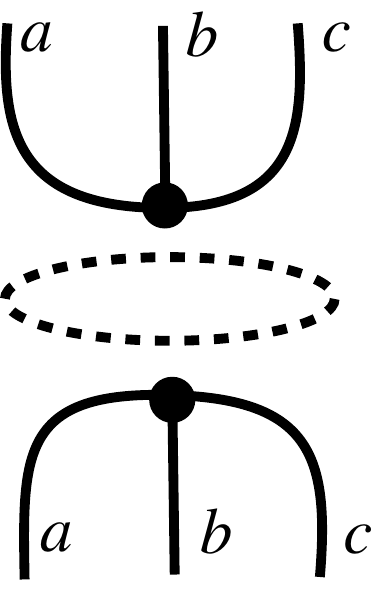}\end{minipage}, & \text{ if } (a,b,c) \text{ admissible} \\
 0, & \text{otherwise.}
 \end{array}\right.$$

If a sphere intersects a skein element in exactly $n > 3$ arcs, then
$$\begin{minipage}{1.3in}\includegraphics[height=.9in]{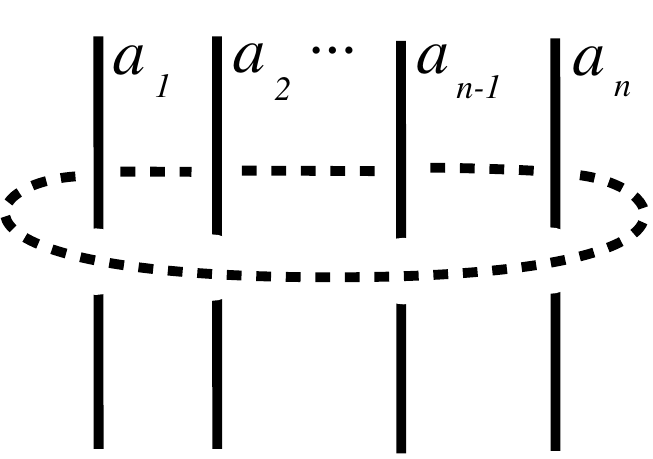}\end{minipage}=  \ds\sum \frac{1}{\begin{minipage}{1.3in}\includegraphics[height=.9in]{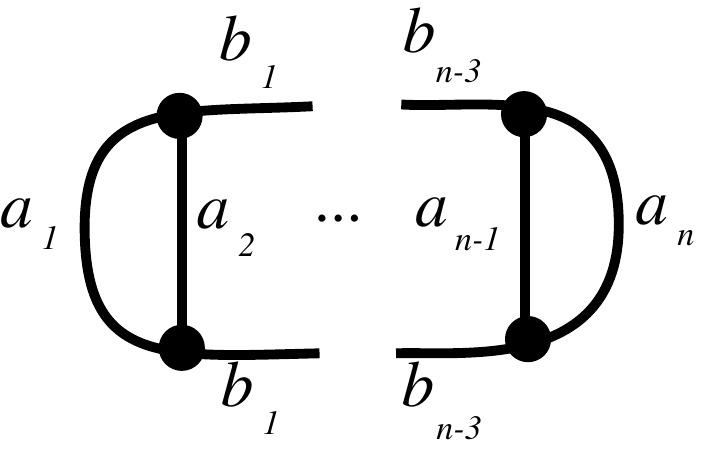}\end{minipage}} \hspace{.25in} \begin{minipage}{1.35in}\includegraphics[height=1.3in]{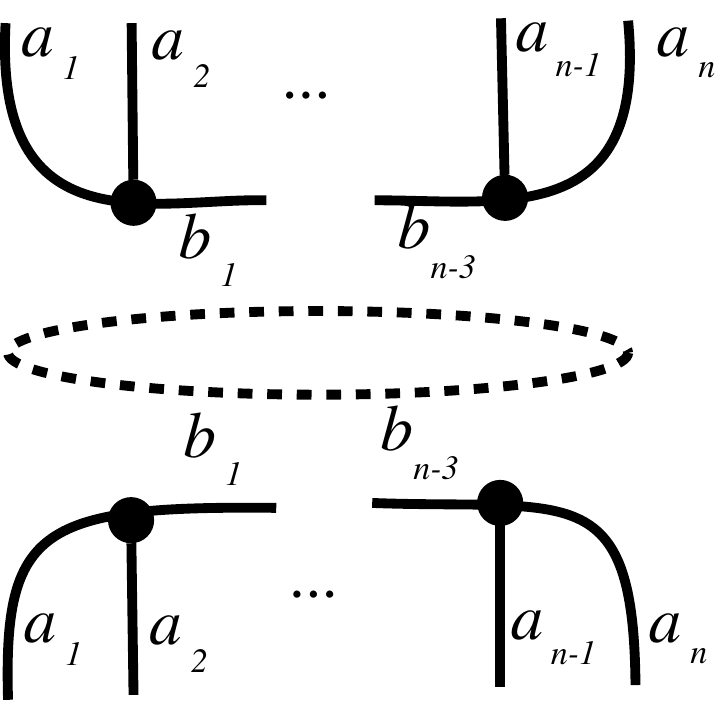}\end{minipage},$$
where the sum is over all admissible labellings.
\end{theorem}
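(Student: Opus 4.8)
The plan is to prove all three formulas uniformly by analyzing the structure of the relative Kauffman bracket skein module of the $3$-ball bounded by the sphere. Let $\Sigma$ denote the sphere and $B$ the ball it bounds, so that the portion of the given skein element lying inside $\Sigma$ is an element $T$ of $K_R(B;a_1,\dots,a_n)$, the relative skein module of $B$ whose $n$ boundary points are colored $a_1,\dots,a_n$. The key structural fact, which I would establish first, is that this module is free with a basis given by banded colored trivalent trees: fix a planar trivalent tree $\tau$ whose $n$ leaves carry the colors $a_1,\dots,a_n$, and let $G_\alpha$ range over all colorings $\alpha$ of the internal edges of $\tau$ that are admissible at every vertex. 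Applying the Fusion Formula (Theorem \ref{thm:fusion}) repeatedly to split adjacent boundary strands rewrites any $T$ as a linear combination $T=\sum_\alpha c_\alpha G_\alpha$, so the $G_\alpha$ span; their linear independence will follow from the nondegeneracy of the pairing introduced below.

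For the cases $n=2$ and $n=3$ the tree $\tau$ has no internal edges, so the basis has at most one element. When $n=2$ the module is spanned by a single arc and is nonzero only if $a=b$, since the Jones--Wenzl idempotent $f_a$ cannot be joined to $f_b$ for $a\neq b$; this produces the factor $\delta^a_b$. To pin down the scalar I would glue $T$ to a copy of the single arc across $\Sigma$: the basis arc closes to an $a$-colored unknot of value $\Delta_a$, while closing $T$ gives precisely the evaluation displayed in the ``after'' picture, so the coefficient is $1/\Delta_a$. The case $n=3$ is identical in spirit: the module is spanned by the trivalent vertex and is nonzero exactly when $(a,b,c)$ is admissible, and gluing $T$ to the vertex yields a theta graph of value $\theta(a,b,c)$ via the bubble relation \eqref{eq:bubble}, giving the factor $1/\theta(a,b,c)$ and the vanishing in the inadmissible case.

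For $n>3$ the coefficients $c_\alpha$ are computed by the same gluing pairing $\langle\ ,\ \rangle$ that caps two copies of $B$ along $\Sigma$ to form $S^3$. The heart of the argument is to show this pairing is diagonal on the tree basis: gluing $G_\alpha$ to $G_\beta$ produces a chain of bubbles along the internal edges, and the bubble relation \eqref{eq:bubble} forces each internal color of $\alpha$ to agree with that of $\beta$ through its Kronecker delta while contributing the factors $\theta/\Delta$. An induction on the number of internal edges then collapses $\langle G_\alpha,G_\alpha\rangle$ to the product of $\theta$'s over the vertices divided by the $\Delta$'s over the internal edges, which is exactly the evaluation of the doubled tree appearing in the denominator of the stated formula. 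Consequently $c_\alpha=\langle T,G_\alpha\rangle/\langle G_\alpha,G_\alpha\rangle$, and substituting the closed-up evaluation of $T$ for $\langle T,G_\alpha\rangle$ yields the claimed sum over admissible labelings. I expect this orthogonality computation to be the main obstacle: one must simultaneously verify the vanishing of the off-diagonal terms and the precise identification of the diagonal weight with the displayed theta-graph, which is a careful bookkeeping induction rather than a single application of the reduction moves.
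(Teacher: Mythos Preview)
The paper does not give its own proof of this theorem: it simply states the result and refers the reader to \cite{gh} with the sentence ``For details of the following theorem, see \cite{gh}.'' So there is no in-paper argument to compare against.

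That said, your outline is essentially the standard proof one finds in \cite{gh} (and in the background references \cite{kl,mv}): show that $K_R(B^3;a_1,\dots,a_n)$ is spanned by admissibly colored trivalent trees via repeated fusion, and then extract coefficients using the gluing pairing across $\Sigma$, whose diagonality on the tree basis follows from iterated applications of the bubble relation \eqref{eq:bubble}. Your identification of the diagonal weight $\langle G_\alpha,G_\alpha\rangle$ with the product of $\theta$'s over vertices divided by the product of $\Delta$'s over internal edges is correct and is exactly the evaluation of the doubled tree appearing as the denominator in the $n>3$ formula. The only places I would tighten the write-up are the two vanishing claims: that $K_R(B^3;a,b)=0$ for $a\neq b$ follows because every crossingless matching in the ball must return at least one strand to the larger idempotent, which annihilates it; and the inadmissible $n=3$ case follows by the same turn-back argument applied to the unique trivalent vertex configuration. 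With those sentences added, your argument is complete and matches the cited source.
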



\subsection{Defining the graph basis of $K_R(\storus,2)$}\label{subsection:graphbasis}

Given a pair of non-negative integers $(i,\vareps)$, let \[g_{i,\vareps} =\hspace{.15in}\begin{minipage}{1in}\includegraphics[height=.7in]{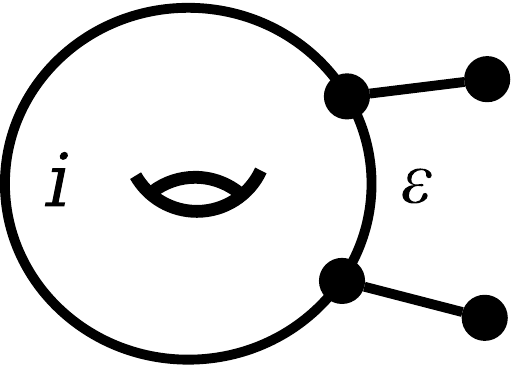}\end{minipage}.\]  Note that this definition implies that the triple $(1,i,\vareps)$ must be admissible.  Therefore, either $\vareps = i+1$ or $\vareps = i-1$.

Since we can write any skein element as a linear combination of these $g_{i,\vareps}$'s using the fusion formula from Theorem \ref{thm:fusion} and Formulas \ref{eq:bubble} - \ref{eq:removeloop}, we have that the $g_{i,\vareps}$'s form a generating set for $K_R (\storus, 2)$.

Making use of work of Hoste-Przytycki, we see that $K_R(S^1 \times S^2)/\text{torsion} = R$ via an isomorphism which sends the empty link to one (see \cite[Theorem 2.3 (d)]{pr}).  We define a pairing $\langle \text{ , }\rangle_D: K_R (\storus, 2) \times K_R (\storus, 2) \rightarrow K_R(S^1 \times S^2)/\text{torsion} = R$ as follows.  First, perform a radial twist on the second solid torus (see Fig. \ref{fig:radialtwist}), then identify the boundaries of the two solid tori via an orientation-reversing homeomorphism to obtain $S^1 \times S^2$.  We view the result as an element of $K_R(S^1 \times S^2)/\text{torsion}$. This pairing is symmetric and can be represented pictorially on the $g_{i,\vareps}$'s as follows, where the loop denotes a 0-surgery:
$$\langle g_{i,\vareps},g_{i^\prime,\vareps^\prime} \rangle_D = \begin{minipage}{1in}\includegraphics[height=.8in]{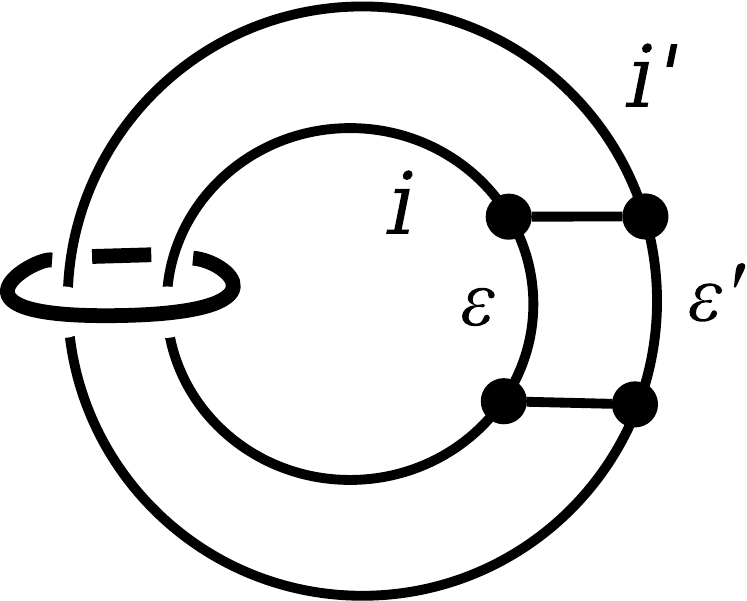}\end{minipage}.$$

\begin{figure}
\includegraphics[height=.85in]{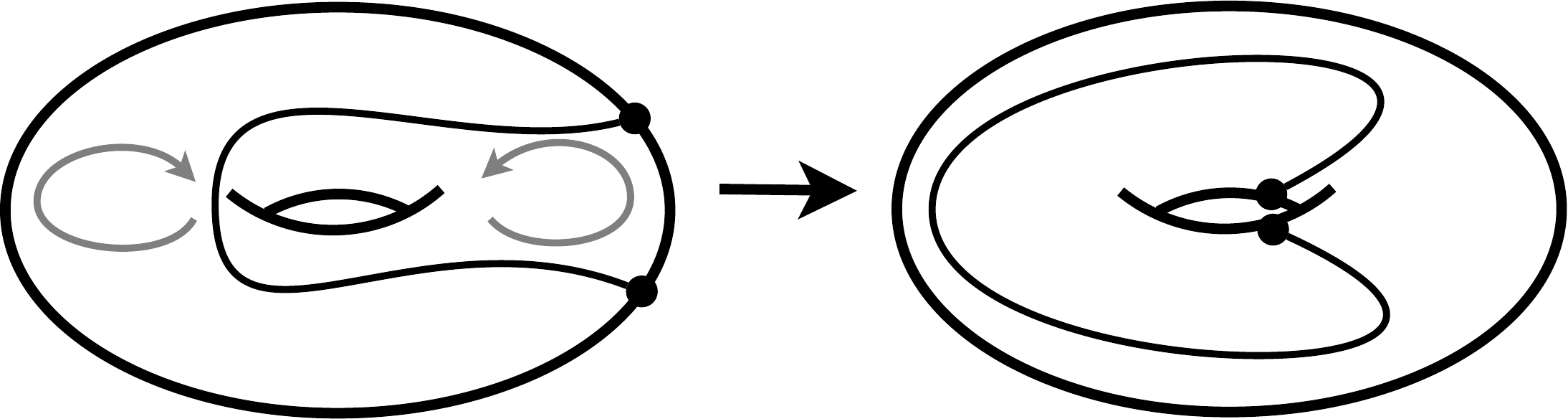}
\caption{A homeomorphism given by twisting the solid torus.}\label{fig:radialtwist}
\end{figure}

We call this the doubling pairing.  We have the following theorem which shows that the $g_{i,\vareps}$'s are orthogonal with respect to the doubling pairing and are therefore linearly independent.  So they form a basis for $K_R (\storus, 2)$.

\begin{theorem}\label{thm:doublingpairing}
We have that \\$\ds\langle g_{i,\vareps}, g_{i^\prime,\vareps^\prime} \rangle_D =
\left\{\begin{array}{l l}
\ds\frac{\theta(1,i,\vareps)^2}{\Delta_i \Delta_\vareps}, & (i,\vareps) = (i^\prime,\vareps^\prime)\\
0, & \text{otherwise}\\
\end{array}\right. $

\end{theorem}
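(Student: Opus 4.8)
The plan is to compute the pairing $\langle g_{i,\vareps}, g_{i',\vareps'}\rangle_D$ directly from its pictorial definition by simplifying the skein in $K_R(S^1\times S^2)/\text{torsion}$ that arises after doubling. The doubled skein consists of the two graphs $g_{i,\vareps}$ and $g_{i',\vareps'}$ glued along their boundary points, together with a $0$-framed surgery loop encircling the ``core'' direction coming from the identification of the two solid tori. First I would use the fusion formula (Theorem \ref{thm:fusion}) on the pair of parallel strands colored $1$ that run between the two copies, expanding the two coincident boundary arcs as a sum over admissible colors $j$ with $(1,1,j)$ admissible, i.e. $j\in\{0,2\}$. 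This turns the picture into a sum of closed banded trivalent graphs sitting inside $S^1\times S^2$, each carrying the surgery loop.

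Next I would reduce each resulting closed graph. After fusion, the graph near each former boundary point becomes a trivalent vertex, and applying the bubble identity (Formula \ref{eq:bubble}) forces a Kronecker delta that kills all terms unless $(i,\vareps)=(i',\vareps')$: concretely, the loops colored $i$ and $\vareps$ on the two sides must match up, which (combined with admissibility $\vareps=i\pm1$) collapses the double sum to the diagonal. This is the step that produces the ``$0$ otherwise'' case. On the diagonal, what remains is a standard closed graph in $S^1\times S^2$ — essentially a theta-graph doubled around the $S^1$ — linked once by the $0$-surgery curve. I would then use the $S^1\times S^2$ evaluation built into the pairing's codomain: in $K_R(S^1\times S^2)/\text{torsion}=R$ a $0$-framed surgery curve encircling a strand colored $k$ evaluates that strand to the empty skein unless $k=0$ (this is the standard consequence of \cite[Theorem 2.3]{pr} used to define the pairing, parallel to Theorem \ref{thm:gh}). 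Tracking the coefficients $\Delta_i,\Delta_\vareps$ from removing the $i$- and $\vareps$-colored loops and the two factors of $\theta(1,i,\vareps)$ from the two trivalent vertices created by fusion yields exactly $\theta(1,i,\vareps)^2/(\Delta_i\Delta_\vareps)$.

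Concretely, the order of steps is: (1) draw the doubled skein for $\langle g_{i,\vareps}, g_{i',\vareps'}\rangle_D$ with its surgery loop; (2) apply fusion to the two boundary-connecting strands; (3) apply the bubble formula at the two vertices to force $(i,\vareps)=(i',\vareps')$ and reduce to a single closed graph; (4) slide/isotope so the $0$-surgery loop bounds, and use the $S^1\times S^2$ vanishing to evaluate; (5) collect the scalar factors $\theta$, $\Delta_i$, $\Delta_\vareps$. The main obstacle I anticipate is step (4): making precise, without hand-waving the pictures, how the radial twist on the second solid torus interacts with the identification to $S^1\times S^2$, and exactly which component the $0$-surgery curve links, so that the evaluation genuinely forces the relevant color to $0$ and no spurious powers of $A$ survive. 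Once that normalization is pinned down, orthogonality and the stated formula follow, and linear independence of the $g_{i,\vareps}$ over $R$ is then immediate since the Gram matrix is diagonal with entries $\theta(1,i,\vareps)^2/(\Delta_i\Delta_\vareps)$, which are units in $R$ (quantum integers having been inverted).
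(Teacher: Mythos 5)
Your toolkit (fusion, bubbles, the topology of $S^1\times S^2$) is the right one, but the step that actually produces orthogonality is misattributed, and as written it fails. After you fuse the two strands colored $1$ that join the two copies, the local configurations created at the former boundary points are \emph{triangles} (two $1$-edges together with the $\varepsilon$-arc), not bubbles; reducing them uses Formula \ref{eq:tetreduce} and produces no Kronecker delta relating $i$ to $i'$ or $\varepsilon$ to $\varepsilon'$. Indeed, no planar recoupling can force $i=i'$ at that stage, because that constraint comes from the global topology: the essential sphere of $S^1\times S^2$ (equivalently, the $0$-surgery curve) meets the doubled skein in \emph{two} arcs, one colored $i$ and one colored $i'$. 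The principle you invoke --- that a $0$-surgery curve encircling a single strand colored $k$ kills it unless $k=0$ --- is the one-arc case and does not apply here; what is needed is the two-arc case of Theorem \ref{thm:gh}, which yields the factor $\delta^i_{i'}/\Delta_i$ and joins the two arcs. Only after that cut do two of the vertices become connected by both a $1$-edge and an $i$-edge, i.e.\ only then does a genuine bubble appear; Formula \ref{eq:bubble} then contributes $\delta^\varepsilon_{\varepsilon'}\,\theta(1,i,\varepsilon)/\Delta_\varepsilon$ and leaves a theta graph worth $\theta(1,i,\varepsilon)$. That three-step computation (two-arc sphere lemma, bubble, theta evaluation) is the paper's entire proof, and it requires no fusion at all.

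Your fusion-first route could be repaired, but at a cost you have not paid: after fusing the $1$-strands over $j\in\{0,2\}$ and reducing the resulting triangles, you would still have to fuse the $i$- and $i'$-colored strands where they pass through the surgery curve, observe that only the $c=0$ term survives (this is where $i=i'$ and the $1/\Delta_i$ actually come from, and it is just Theorem \ref{thm:gh} in disguise), and then check that the remaining sum over $j$ of factors $\Delta_j/\theta(1,1,j)$ times tetrahedral coefficients collapses to $\theta(1,i,\varepsilon)^2/(\Delta_i\Delta_\varepsilon)$. None of that bookkeeping is carried out in the proposal, and the claim that the answer ``yields exactly'' the stated formula is asserted rather than derived.
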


\begin{proof}
According to Theorem \ref{thm:gh} and Formula \ref{eq:bubble}, we have

$$\begin{array}{r c l}
\langle g_{i,\vareps}, g_{i^\prime,\vareps^\prime} \rangle_D & = & \begin{minipage}{1.3in}\includegraphics[height=1in]{graphbasispairing1-eps-converted-to.pdf}\end{minipage}  =  \ds\frac{\delta^i_{i^\prime}}{\Delta_i} \hspace{.1in} \begin{minipage}{.8in}\includegraphics[height=.8in]{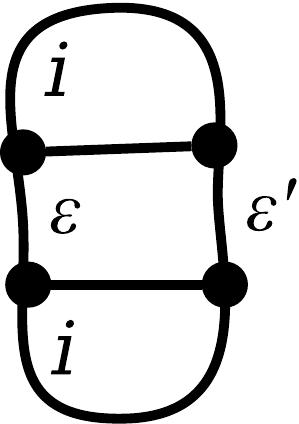}\end{minipage} \\
& = & \ds\frac{\delta^i_{i^\prime}\delta^\vareps_{\vareps^\prime}\theta(1,i,\vareps)}{\Delta_i \Delta_\vareps} \hspace{.1in} \begin{minipage}{.6in}\includegraphics[height=.6in]{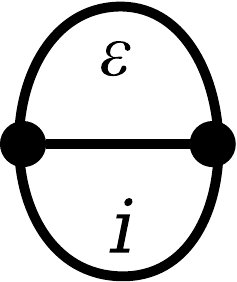}\end{minipage}\\
& = & \ds\frac{\delta^i_{i^\prime}\delta^\vareps_{\vareps^\prime}\theta(1,i,\vareps)^2}{\Delta_i \Delta_\vareps}.
\end{array}$$

\end{proof}


\section{The almost-orthogonal basis}\label{section:almostorthbasis}

Recall, several bases for $\K$ are defined in \cite{bhmv92}.  Let $e_i$ be a non-contractable loop in $\K$ colored $i$.  The set of all such elements is a basis for $\K$.  In particular, $e_1$ is also denoted by $z$.  The set $\{1, z, z^2, \ldots\}$ also forms a basis for $\K$, and furthermore, $\K = \laurent[z]$ as an algebra.  Finally, $\{Q_n\}$ for $n > 0$ is a basis where $Q_n = (z - \phi_0)(z-\phi_1)\ldots(z-\phi_{n-1})$ and $\phi_n = -A^{2n+2}-A^{-2n-2}$.  Each of these bases is related to the others by a unimodular triangular basis change.
 
Recall the Hopf pairing on $\K$ defined in \cite{bhmv92}.  Choose an orientation-preserving embedding of two disjoint solid tori into $S^3$ such that each of the standard bands is sent to one component of the banded Hopf link where each component has writhe zero.  Then let $\langle \text{ , } \rangle$ be given by the induced map $\K \times \K \rightarrow K(S^3) = \laurent$.  Note that $K(S^3)$ is isomorphic to $\laurent$ via the isomorphism which sends the empty link to one.  We express this pairing pictorially as follows:  $$\langle a, b \rangle =  \left\langle\hspace{.05in}\begin{minipage}{.5in}\includegraphics[width=.5in]{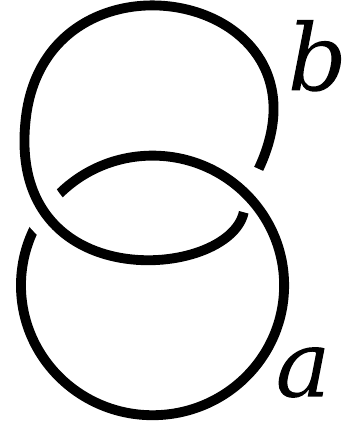}\end{minipage}\right\rangle.$$ 

\begin{lemma}[BHMV]\label{Qorth}
The set $\{Q_n\}$ is a basis for $\K$ which is orthogonal with respect to the bilinear form $\langle\text{ , }\rangle$.  
\end{lemma}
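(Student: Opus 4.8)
The plan is to compute the Hopf pairing $\langle Q_m, Q_n \rangle$ directly and show it vanishes unless $m = n$. The key observation is that $Q_n = (z - \phi_0)(z - \phi_1)\cdots(z-\phi_{n-1})$ should, in skein-theoretic terms, be expressible in terms of the colored loops $e_i$: in fact one expects $Q_n$ to equal the loop colored by $f_n$, i.e. $Q_n = e_n$, up to the recursive identity $z \cdot e_i = e_{i+1} + e_{i-1}$ together with the fact that a single loop colored $i$ encircled by a 0-framed loop (the Hopf-paired loop colored $j$) picks up the eigenvalue governed by $\phi$-type scalars. So the first step I would carry out is to recall or re-derive that $e_n$ satisfies $z e_n = e_{n+1} + e_{n-1}$ (the Chebyshev recursion coming from $f_1 \otimes f_n = f_{n+1} \oplus f_{n-1}$, which is exactly Formula \ref{eq:removeloop} applied in reverse / the fusion formula of Theorem \ref{thm:fusion}), and that $Q_n$ satisfies the same recursion with the same initial data, hence $Q_n = e_n$.

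Second, I would compute $\langle e_m, e_n \rangle$ using Theorem \ref{thm:gh}. The Hopf pairing puts a loop colored $m$ and a loop colored $n$ as the two components of a Hopf link in $S^3$, with writhe zero. Pushing one loop through and using the ``sphere meeting the skein in two arcs'' case of Theorem \ref{thm:gh} (equivalently, the standard computation that the Hopf link colored $(m,n)$ evaluates via the $S$-matrix), one gets $\langle e_m, e_n\rangle = 0$ whenever $m \neq n$, because a loop colored $n$ bounding a disk that a loop colored $m$ pierces forces $m = n$ for a nonzero contribution by admissibility of the triple $(m,n,0)$; and when $m = n$ one gets a nonzero power-series-free scalar (in fact $(-1)^n \Delta_n$ or similar, certainly nonzero in $\laurent$). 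This establishes orthogonality, and since $\{Q_n\} = \{e_n\}$ is already known to be a basis (stated in the excerpt, from \cite{bhmv92}), the lemma follows.

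An alternative route, if one does not want to identify $Q_n$ with $e_n$, is purely algebraic: $z$ is self-adjoint for the Hopf pairing (twisting a loop around the other is symmetric), and a loop colored $i$ threaded by $z$ once is an eigenvector-type relation; more precisely $\langle z a, b\rangle = \langle a, z b\rangle$, and $\langle Q_n, 1 \rangle = 0$ for $n \geq 1$ together with the fact that $z$ acts on the ``$e$-basis'' with eigenvalue data matching the $\phi_i$ lets one run an induction: $\langle Q_m, Q_n\rangle = \langle Q_m, (z - \phi_{n-1}) Q_{n-1}\rangle = \langle (z-\phi_{n-1}) Q_m, Q_{n-1}\rangle$, and $(z - \phi_{n-1})Q_m$ is, for $m < n$, a polynomial in $z$ of degree $\le n-1$ lying in the span of $Q_0, \dots, Q_m, \dots$ — one checks the relevant pairings vanish by the inductive hypothesis after expressing everything in the $Q$-basis. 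I would present whichever is cleaner; the $Q_n = e_n$ identification is the slickest.

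The main obstacle I expect is bookkeeping the normalizations and signs: getting $z e_n = e_{n+1} + e_{n-1}$ with the correct conventions (Chebyshev $\Delta_n$ vs. $S_n$, the $-A^2 - A^{-2}$ conventions), verifying the base cases $Q_0 = e_0 = 1$ and $Q_1 = z - \phi_0$ versus $e_1 = z$ — note $Q_1 = z - \phi_0 = z + A^2 + A^{-2} = z - \delta^{-1}\cdot(\text{something})$, so one must be careful that $Q_1 \ne e_1$ on the nose and the identification $Q_n = e_n$ in fact may fail; more likely the correct statement is that $Q_n$ is the loop colored $f_n$ only after a shift, or that the Hopf pairing is diagonalized by $\{Q_n\}$ precisely because $\phi_{n-1}$ are the eigenvalues of the meridian-twist map, which is the real content. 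So the careful point is to pin down exactly which geometric operation has $\phi_i$ as eigenvalues and to confirm $Q_n$ is built from exactly those eigenvalues; once that is nailed down, orthogonality is immediate from Theorem \ref{thm:gh}.
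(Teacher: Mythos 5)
Your primary route fails at both of its key steps, so the gap is genuine. First, $Q_n \neq e_n$: since $\phi_0 = -A^2 - A^{-2} = \delta$, already $Q_1 = z - \delta$ while $e_1 = z$; the two families agree only up to a triangular change of basis. You flag this yourself at the end, but the argument never recovers from it. Second, and more fatally, the colored cores $e_n$ are \emph{not} orthogonal for the Hopf pairing: $\langle e_m, e_n\rangle$ is the $(m,n)$-colored Hopf link evaluation (an $S$-matrix entry), which is a nonzero Laurent polynomial for all $m,n$ --- for instance $\langle e_1, e_0\rangle = \langle z, 1\rangle = \delta \neq 0$. Your appeal to the two-arc case of Theorem \ref{thm:gh} is a misapplication: that statement requires a \emph{sphere} meeting the skein in two points, and no sphere in $S^3$ separates the two components of a Hopf link; this is precisely why the doubling pairing of Theorem \ref{thm:doublingpairing}, which lives in $S^1\times S^2$ where such a sphere exists, diagonalizes the $g_{i,\vareps}$, while the Hopf pairing does not diagonalize the $e_n$. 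Indeed, the entire point of passing from $\{e_n\}$ to $\{Q_n\}$ is that the former is not an orthogonal basis. Your fallback identity $\langle za,b\rangle = \langle a, zb\rangle$ is also false: $\langle z\cdot z, 1\rangle = \delta^2$ while $\langle z, z\cdot 1\rangle = -\delta(A^4+A^{-4})$.

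The correct mechanism is the one you name only in your final sentence. The operator with eigenvalues $\phi_i$ is not multiplication by $z$ but the map that encircles a skein with a $0$-framed meridian loop; Formula \ref{eq:removeloop} gives $\phi_j$ on a strand colored $j$. Since a $0$-framed parallel of one Hopf component is a meridian of the other, each factor of $z$ on the left of the pairing becomes such an encircling loop around the right-hand skein, so $\langle z^k, e_j\rangle = \phi_j^{\,k}\,\Delta_j$ and hence $\langle Q_n, e_j\rangle = \Delta_j\prod_{i=0}^{n-1}(\phi_j - \phi_i)$, which vanishes exactly when $j < n$ (this is the content of Lemma \ref{lemma:removingq}). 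As $Q_m$ lies in the span of $e_0,\dots,e_m$, this gives $\langle Q_n, Q_m\rangle = 0$ for $m<n$, and symmetry of the pairing handles $m>n$; the basis claim is immediate because $Q_n$ is monic of degree $n$ in $z$ and $\{z^n\}$ is a basis. Note the paper itself offers no proof --- the lemma is quoted from \cite{bhmv92} --- but the argument above is the standard one and is what your ``alternative route'' becomes once the self-adjointness error is repaired.
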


The following formula for $\langle Q_n, Q_n \rangle$ is stated in \cite[Section 2]{gm2}.

\begin{lemma}
For all $n\geq 0$, we have $\langle Q_n,Q_n\rangle = \Delta_n\displaystyle \prod _{i=0}^{n-1}(\phi_n - \phi_i)$.
\end{lemma}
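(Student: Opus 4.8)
The plan is to turn the Hopf pairing into a polynomial evaluation. First I would prove the evaluation identity
\[
\langle e_j, p(z)\rangle \;=\; p(\phi_j)\,\Delta_j \qquad\text{for all } p\in\laurent[z] \text{ and all } j\ge 0,
\]
where $e_j$ is the loop colored $j$. By bilinearity of the Hopf pairing this reduces to the monomial case $\langle e_j, z^m\rangle = \phi_j^m\,\Delta_j$. For that, recall that in the banded Hopf link used to define the pairing the second component is isotopic, in the complement of the first, to a meridian of a tubular neighborhood of the first component; hence placing $z^m$ on the second component is the same, after such an isotopy, as placing $m$ nested parallel $1$-colored loops around the single $j$-colored strand carrying $e_j$. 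Removing these loops one at a time and applying the encircling formula (Formula~\ref{eq:removeloop}), each removal contributes a factor $\phi_j$, and what is left is $e_j$ on a $0$-framed unknot in $S^3$, which evaluates to $\Delta_j$. This yields $\langle e_j, z^m\rangle = \phi_j^m\Delta_j$, hence the evaluation identity.

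Granting this, the rest is formal bookkeeping. By the unimodular triangular change of basis between $\{e_k\}$ and $\{1,z,z^2,\dots\}$, the set $\{e_0,\dots,e_n\}$ is a basis over $\laurent$ for the submodule of polynomials in $z$ of degree at most $n$; since $Q_n=(z-\phi_0)\cdots(z-\phi_{n-1})$ is monic of degree exactly $n$ in $z$, I may write $Q_n=\sum_{k=0}^{n}\beta_k e_k$ with $\beta_k\in\laurent$ and $\beta_n=1$. Bilinearity and the evaluation identity then give
\[
\langle Q_n, Q_n\rangle \;=\; \sum_{k=0}^{n}\beta_k\,\langle e_k, Q_n\rangle \;=\; \sum_{k=0}^{n}\beta_k\,Q_n(\phi_k)\,\Delta_k .
\]
For $0\le k\le n-1$ the value $Q_n(\phi_k)=\prod_{i=0}^{n-1}(\phi_k-\phi_i)$ contains the factor $\phi_k-\phi_k=0$, so every term with $k<n$ drops out, leaving $\langle Q_n, Q_n\rangle=\beta_n\,Q_n(\phi_n)\,\Delta_n=\Delta_n\prod_{i=0}^{n-1}(\phi_n-\phi_i)$, as claimed. (The same computation with $Q_m$, $m<n$, in the first slot gives $0$, so this argument also recovers the orthogonality asserted in Lemma~\ref{Qorth}.)

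The one step that needs genuine care, rather than routine formal manipulation, is the evaluation identity. There one must check that the isotopy moving the copies of $z$ into meridional position preserves all framings (so that no twist coefficient is introduced and Formula~\ref{eq:removeloop} applies verbatim), and that each successive loop removal leaves the underlying $j$-colored strand unchanged, so that the factors really multiply to $\phi_j^m$. Everything downstream of that is bilinearity, triangularity of the basis change, and the elementary observation that $Q_n$ vanishes at $\phi_0,\dots,\phi_{n-1}$.
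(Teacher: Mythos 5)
Your argument is correct, but it is worth noting that the paper does not prove this lemma at all: it simply cites \cite[Section 2]{gm2} for the formula. What you have written is therefore a genuine, self-contained derivation rather than a variant of the paper's proof. Your key evaluation identity $\langle e_j, p(z)\rangle = p(\phi_j)\Delta_j$ is sound: in the zero-writhe banded Hopf link the second component is a planar meridian of the first, so $z^m$ becomes $m$ nested $1$-colored loops encircling the $j$-colored strand, each removable by Formula~\ref{eq:removeloop} at the cost of a factor $\phi_j$, leaving the $j$-colored $0$-framed unknot with value $\Delta_j$; this is essentially the same encircling computation the paper performs later in Lemma~\ref{lemma:removingq} (with $i$-colored strand replaced by $e_j$ and $Q_j$ replaced by $z^m$), so all the ingredients are already present in the paper. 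The bookkeeping step is also right: since $e_n = z^n + (\text{lower order})$ by the unimodular triangular basis change, $Q_n = e_n + \sum_{k<n}\beta_k e_k$, and every term $\beta_k Q_n(\phi_k)\Delta_k$ with $k<n$ vanishes because $\phi_k$ is a root of $Q_n$, leaving exactly $Q_n(\phi_n)\Delta_n = \Delta_n\prod_{i=0}^{n-1}(\phi_n-\phi_i)$. Your approach buys two things over the citation: it keeps the paper self-contained, and, as you observe, the same computation with $Q_m$ ($m<n$) in the first slot gives $0$ and hence reproves the orthogonality of Lemma~\ref{Qorth} for free. The one point you flag as needing care --- that the isotopy to meridional position introduces no framing correction --- is indeed the only delicate spot, and it is fine here precisely because both components of the banded Hopf link are taken with writhe zero.
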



By adapting the definition of the Hopf pairing, we define an analagous pairing $\langle\text{ , }\rangle:K_R(S^1\times D^2, 2) \times K_R(S^1\times D^2, 2)\rightarrow K_R(S^3) = R$ as follows:   $$\langle a, b \rangle =  \left\langle\hskip.05in\begin{minipage}{1in}\includegraphics[width=1in]{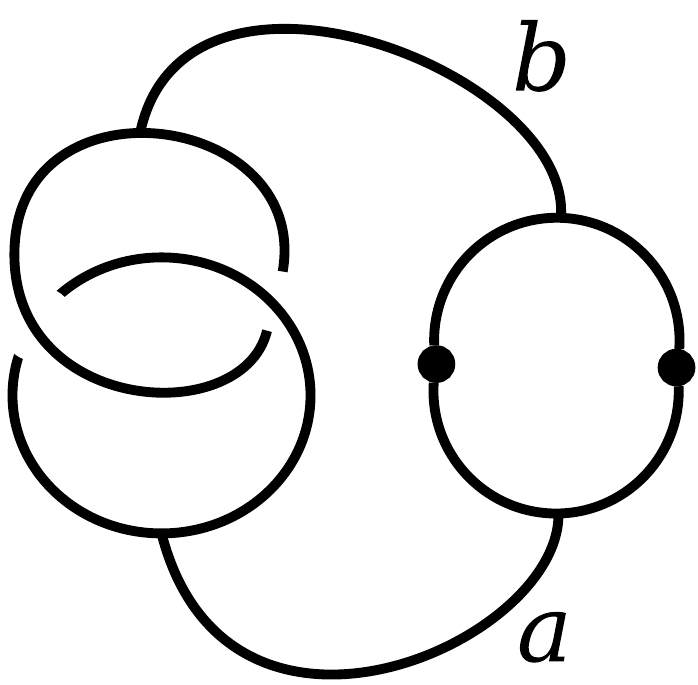}\end{minipage}\hspace{.04in}\right\rangle$$ where $a$ and $b$ are skein elements lying in a regular neighborhood of the trivalent graphs pictured.  Again, we note that $K_R(S^3)$ is isomorphic to $R$ via the isomorphism that sends the empty link to one.  We call this the relative Hopf pairing.  We use the same notation for this pairing as for the Hopf pairing, but the context should make it clear which pairing is being used.

It is easy to see that this pairing is a symmetric bilinear form on $K_R(S^1\times D^2, 2)$.  Furthermore, this pairing restricted to $K(S^1\times D^2, 2)\times K(S^1\times D^2, 2)$ is a symmetric bilinear form which takes values in $K(S^3) = \laurent$.  For simplicity, we use the same notation for the restricted pairing.

We use the basis $\{Q_n\}$ to define a basis for $K(\storus, 2)$.  For $n \geq 0$, let  $$x_n =  \begin{minipage}{1in}\includegraphics[width=1in]{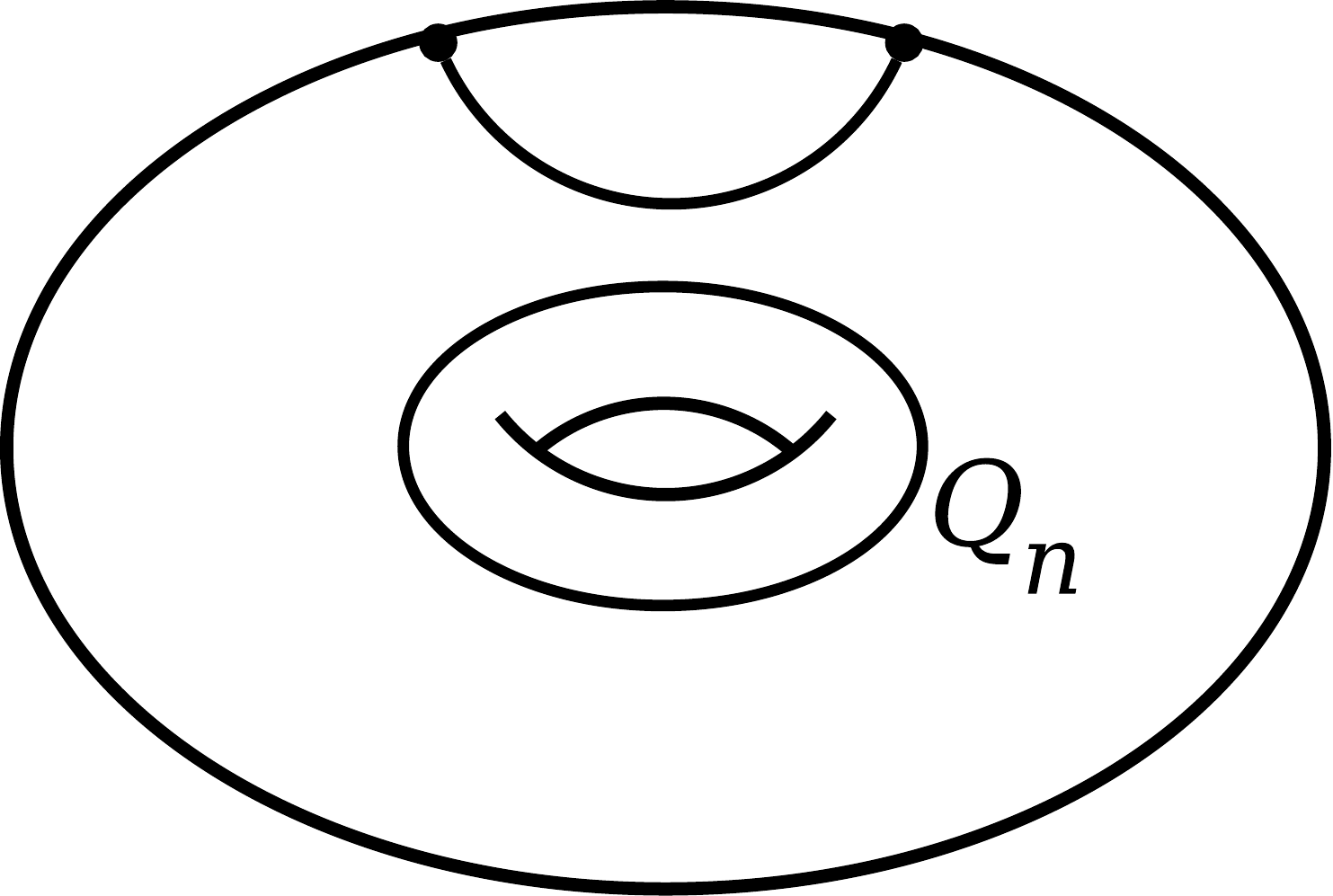}\end{minipage}\text{ and }y_n =  \begin{minipage}{1in}\includegraphics[width=1in]{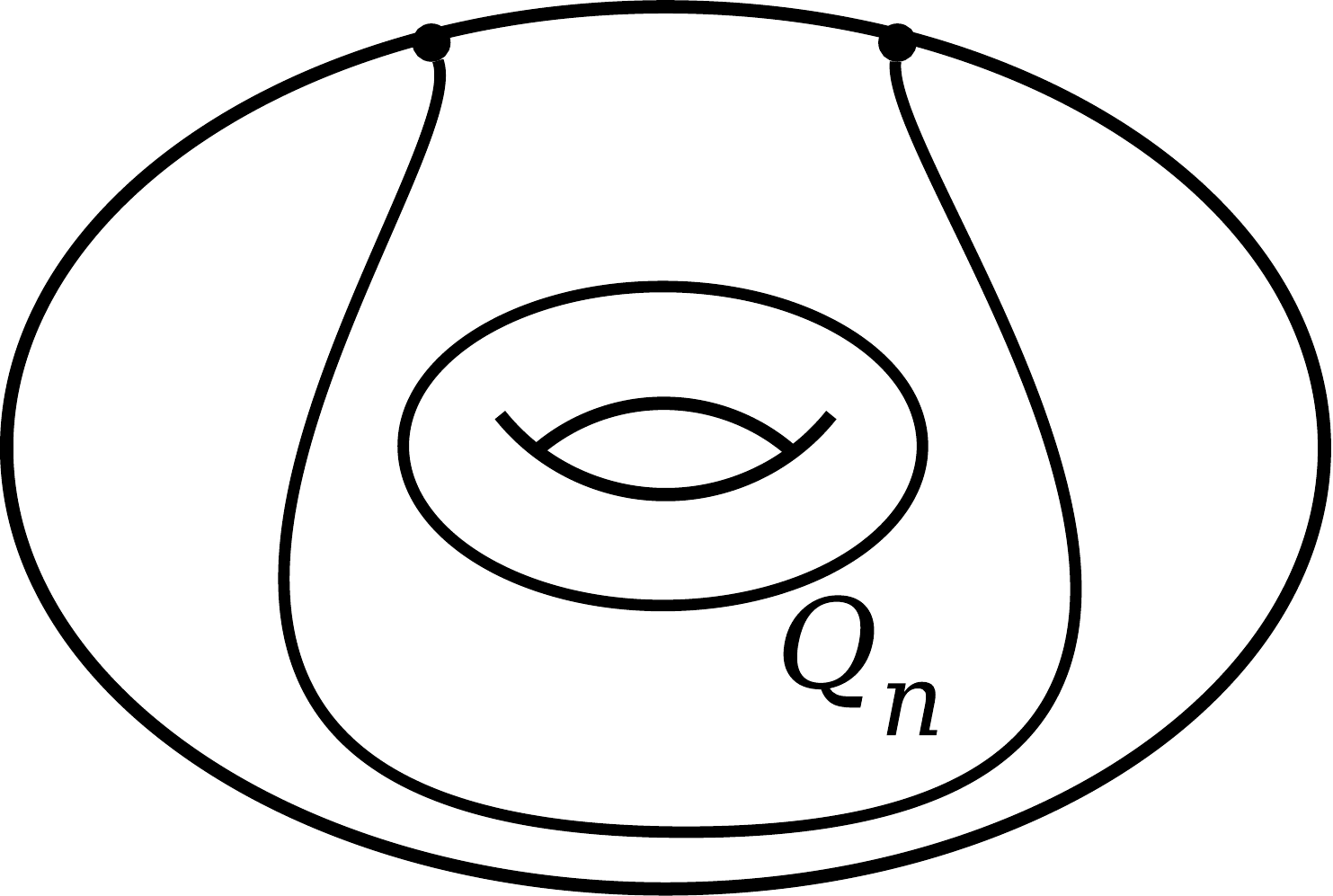}\end{minipage}.$$  Since $\{Q_n\}$ is a basis for $\K$, it is not hard to see that $\{x_n,y_n\}$ is a basis for $\Krel$.

Before discussing how this basis relates to the relative Hopf pairing, we must define a map on $K(S^1 \times D^2)$.  We let $\bar\tau$ denote the mirror image of the map $\tau$ from \cite{bhmv92}.  So, $\bar\tau (u)$ for $u \in K(\storus)$ is given by adding a single loop as in Fig. \ref{taubar}.  The next two lemmas follow directly from \cite[Lemmas 3.2 and 4.9]{bhmv92} respectively.

\begin{figure}
\includegraphics[height=1in]{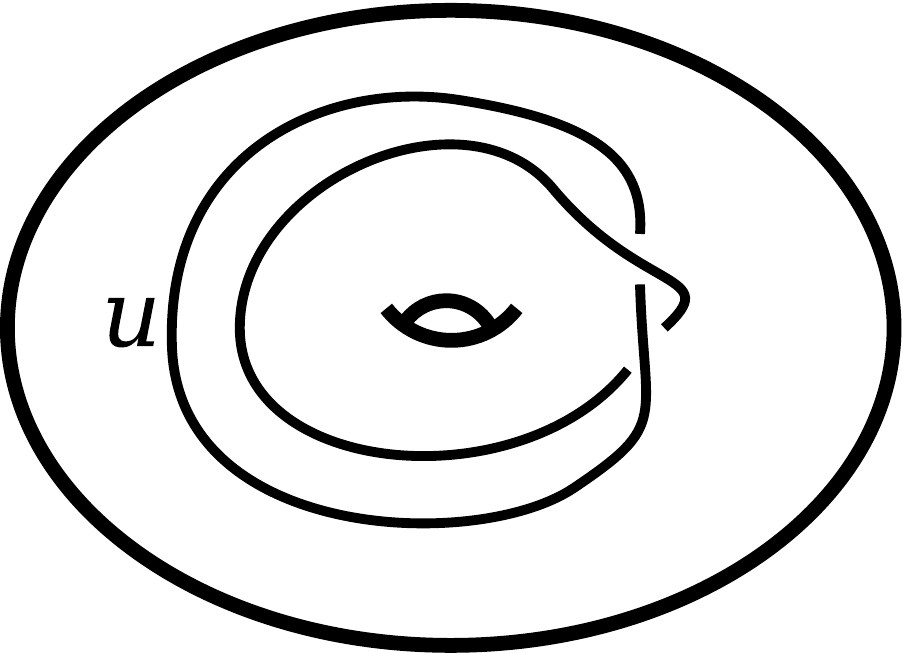}
\caption{$\bar\tau(u)$}\label{taubar}
\end{figure}

\begin{lemma}\label{taubarz}
$\bar\tau(Q_{n-1}) = A^{-2n+2}Q_n + \ldots$, where the dots indicate lower order terms; that is, terms in which the index of each $Q_i$ appearing is at most $n-1$.
\end{lemma}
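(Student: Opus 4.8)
The plan is to compute $\bar\tau(Q_{n-1})$ directly by expanding $Q_{n-1}$ in a convenient basis, applying $\bar\tau$, and then re-expressing the result in the $\{Q_n\}$ basis. Recall that $\bar\tau(u)$ is obtained from $u \in \K$ by encircling it with a single $0$-framed loop (the mirror of the map $\tau$ from \cite{bhmv92}). Since $\bar\tau$ is $\laurent$-linear, it suffices to understand how it acts on the basis $\{e_i\}$ of non-contractable loops colored $i$, or equivalently on the powers $\{z^m\}$, and then track the triangular change of basis between $\{z^m\}$ and $\{Q_n\}$. Because each of the bases $\{e_i\}$, $\{z^m\}$, $\{Q_n\}$ is related to the others by a unimodular upper-triangular change of basis, the leading-order behavior of $\bar\tau$ on $Q_{n-1}$ will be governed by its leading-order behavior on $e_{n-1}$ (or $z^{n-1}$), with all corrections landing in lower order terms — exactly the form of the claimed statement.

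First I would record the effect of adding the encircling loop to a single colored strand $e_i$. Using the fusion formula (Theorem \ref{thm:fusion}) to fuse the encircling loop with the strand colored $i$, and then Formula \ref{eq:removeloop} to evaluate the resulting small loops colored $i\pm1$, one gets $\bar\tau(e_{i}) = $ a linear combination of $e_{i+1}$ and $e_{i-1}$, with the coefficient of the top term $e_{i+1}$ being an explicit monomial in $A$; this is precisely the mirror-image analogue of \cite[Lemma 3.2]{bhmv92}, whence the hypothesis that the lemma "follows directly from" that reference. In particular $\bar\tau$ raises the top degree by exactly one when applied to a degree-$(n-1)$ element, and the new leading coefficient is the old leading coefficient times that monomial. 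Next I would translate this to the $\{Q_n\}$ basis: since $Q_{n-1} = z^{n-1} + (\text{lower})$ and since $\bar\tau$ sends lower-order terms to lower-order terms (degree increases by at most one under $\bar\tau$), we get $\bar\tau(Q_{n-1}) = c_n z^n + (\text{lower}) = c_n Q_n + (\text{lower})$ for the appropriate scalar $c_n$.

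The only genuine content, then, is pinning down the scalar $c_n = A^{-2n+2}$. I would extract it by evaluating the relevant closed diagram: encircle the top strand colored $n-1$ by the loop, fuse, and keep only the channel that produces color $n$, reading off the coefficient from the fusion coefficient $\Delta_{n-1}/\theta(1,n-1,n)$ together with the twist/removal factors coming from Formula \ref{eq:removeloop} (or equivalently the mirror twist coefficient $\lambda$ from Formula \ref{eq:lambdatwist}). This is a short computation with the admissibility constraint $(1,n-1,n)$, and it should yield exactly $A^{-2n+2}$, matching the sign/exponent conventions of $\bar\tau$ as the mirror of $\tau$. The main obstacle I anticipate is purely bookkeeping: getting the power of $A$ right, since it depends sensitively on the framing/writhe conventions built into $\bar\tau$ and on whether one uses $\phi_i$ or $\lambda^{1\,i}_{i\pm1}$ to evaluate the encircling loop. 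Once the convention is fixed, everything else is formal linear algebra over the triangular basis change, so I would present the computation of $c_n$ carefully and then invoke \cite[Lemma 3.2]{bhmv92} for the structural part.
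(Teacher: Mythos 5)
Your route is different from the paper's: the paper gives no computation at all, observing instead that $\bar\tau$ is the mirror image of the map $\tau$ of \cite{bhmv92} and that each $Q_n = \prod_{i<n}(z-\phi_i)$ is invariant under $A \mapsto A^{-1}$ (since each $\phi_i = -A^{2i+2}-A^{-2i-2}$ is), so the statement is literally \cite[Lemma 3.2]{bhmv92} with $A$ replaced by $A^{-1}$, turning the leading coefficient $A^{2n-2}$ into $A^{-2n+2}$. Your plan to reprove this from scratch via the triangular basis changes among $\{e_i\}$, $\{z^m\}$, $\{Q_n\}$ is structurally sound, but as written it has two gaps.

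First, you describe $\bar\tau(u)$ as ``encircling $u$ with a single $0$-framed loop.'' If that loop is a meridian of the solid torus --- a small unknotted circle around the strands of $u$ --- then the operation is diagonal on the basis $\{e_i\}$ with eigenvalue $\phi_i$ (this is exactly Formula \ref{eq:removeloop}; compare Lemma \ref{lemma:removingq}); it raises no degrees, so $\bar\tau(Q_{n-1})$ would have no $Q_n$ component at all and the lemma would be false for that operator. The loop in Fig.~\ref{taubar} is longitudinal: it runs once around the hole of the solid torus and crosses the strands of $u$, and only for such a loop does fusing it with $e_{n-1}$ open the color-$n$ channel. Second, the entire nontrivial content of the lemma is the exponent in $A^{-2n+2}$, and your proposal stops at ``it should yield exactly $A^{-2n+2}$.'' To finish, you must actually resolve the crossings of the added strand with the $(n-1)$-cable (equivalently, apply the twist coefficient of Formula \ref{eq:lambdatwist} for the admissible triple $(1,n-1,n)$, with the crossing sign dictated by the mirror convention) and check that the top channel contributes $A^{-2(n-1)}$; nothing in your write-up distinguishes $A^{-2n+2}$ from $A^{2n-2}$ or from $1$. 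If that bookkeeping is unappealing, the shorter and safer route is the paper's: quote \cite[Lemma 3.2]{bhmv92} and apply the mirror involution, noting that the $Q_n$ are fixed by it.
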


\begin{lemma}\label{taubarQQ}
One has that $\langle\bar\tau Q_n,Q_n\rangle = (A^{-2n}\sigma_n-A^{-2n+2}\sigma_{n-1})\langle Q_n, Q_n\rangle$ for all $n\geq 0$ where $\sigma_n = \sum_{i=0}^n \phi_i$.
\end{lemma}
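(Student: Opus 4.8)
The plan is to derive Lemma~\ref{taubarQQ} as the mirror image of \cite[Lemma~4.9]{bhmv92}, in the same way that Lemma~\ref{taubarz} is the mirror image of \cite[Lemma~3.2]{bhmv92}. Passing to mirror images of link diagrams induces on each Kauffman bracket skein module an involution: it acts on scalars by the ring automorphism of $\laurent$ sending $A$ to $A^{-1}$, it carries \cite{bhmv92}'s map $\tau$ to $\bar\tau$ by the definition of $\bar\tau$, and it is compatible with the Hopf pairing on $\K$. The last point holds because the $e_i$ are crossingless, hence fixed by the involution, while the Hopf pairing matrix $\bigl(\langle e_i,e_j\rangle\bigr)$ in the basis $\{e_i\}$ has entries lying in $\mathbb{Z}[A^{2},A^{-2}]$ and palindromic in $A$ (up to sign they are the entries of the $S$-matrix; e.g.\ $\langle z,z\rangle$ is the bracket of the $0$-framed Hopf link, which is palindromic), so that $\overline{\langle a,b\rangle}=\langle\bar a,\bar b\rangle$ for all $a,b\in\K$. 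Applying the involution to the identity $\langle\tau Q_n,Q_n\rangle=(A^{2n}\sigma_n-A^{2n-2}\sigma_{n-1})\langle Q_n,Q_n\rangle$ of \cite[Lemma~4.9]{bhmv92} (with notation matched to ours) then expresses $\langle\bar\tau Q_n,Q_n\rangle$ as the image of the right-hand side under $A\mapsto A^{-1}$.

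It remains to observe that the only quantities in that right-hand side which are not already palindromic in $A$ are the monomials $A^{\pm 2n}$. Indeed, $\phi_i=-A^{2i+2}-A^{-2i-2}$ is palindromic, so $\sigma_n=\sum_{i=0}^{n}\phi_i$ is palindromic; the core loop $z=e_1$ is crossingless, so $Q_n=(z-\phi_0)\cdots(z-\phi_{n-1})$ is fixed by the involution, and hence so is $\langle Q_n,Q_n\rangle$ (this also follows from the formula $\langle Q_n,Q_n\rangle=\Delta_n\prod_{i=0}^{n-1}(\phi_n-\phi_i)$, $\Delta_n$ being palindromic). Thus the involution turns $A^{2n}\sigma_n-A^{2n-2}\sigma_{n-1}$ into $A^{-2n}\sigma_n-A^{-2n+2}\sigma_{n-1}$ and fixes $\langle Q_n,Q_n\rangle$, which is exactly the asserted formula. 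As a sanity check, for $n=0$ we have $\sigma_{-1}=0$ and $Q_0=1$, so the formula predicts $\langle\bar\tau(1),1\rangle=\sigma_0=\phi_0=-A^2-A^{-2}=\delta$, consistent with the fact that $\bar\tau(1)$ is a single unknotted loop and so pairs with $1$ to give $\delta$.

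The step I expect to demand the most care is purely one of bookkeeping: matching the conventions of \cite{bhmv92} to those used here. One must check that the mirror of \cite{bhmv92}'s $\tau$ is precisely the $\bar\tau$ of Fig.~\ref{taubar} rather than, say, $\tau^{-1}$ or $\tau$ composed with a framing twist, and that \cite{bhmv92}'s normalisations of $Q_n$, of $\sigma_n$, and of the Hopf pairing coincide with ours (in particular whether their Lemma~4.9 is stated for $\tau$ or already for its mirror, and whether the index on $\sigma$ is shifted). If a verbatim translation is not available, one can argue directly: expand $\bar\tau Q_n$ in the orthogonal basis $\{Q_k\}$ of Lemma~\ref{Qorth}; by Lemma~\ref{taubarz} applied with index $n+1$ the expansion has the form $A^{-2n}Q_{n+1}+c_nQ_n+(\text{terms of index}<n)$, so orthogonality gives $\langle\bar\tau Q_n,Q_n\rangle=c_n\langle Q_n,Q_n\rangle$, reducing the lemma to the computation of the subleading coefficient $c_n$, which must equal $A^{-2n}\sigma_n-A^{-2n+2}\sigma_{n-1}$. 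That computation is \cite[Lemma~3.2]{bhmv92} carried one step further in the filtration, and can also be read off from the skein picture of $\bar\tau$ using Formulas~\ref{eq:bubble}--\ref{eq:removeloop}, the fusion formula, and Theorem~\ref{thm:gh}.
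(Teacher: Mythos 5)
Your argument is correct and is essentially the paper's own: the paper proves this lemma by simply asserting that it follows directly from \cite[Lemma 4.9]{bhmv92}, and your mirror-image involution argument (with the observations that $Q_n$, $\sigma_n$, $\langle Q_n,Q_n\rangle$, and the Hopf pairing matrix are all fixed under $A\mapsto A^{-1}$) is exactly the justification that citation leaves implicit. The bookkeeping caveat you raise is the only real content of the verification, and your fallback via Lemma~\ref{Qorth} and the expansion $\bar\tau Q_n=A^{-2n}Q_{n+1}+c_nQ_n+\cdots$ is a sound safety net.
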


Using these results, we prove the following lemma which states that the basis $\{x_n,y_n\}$ is almost orthogonal with respect to the relative Hopf pairing.  In view of this lemma, we refer to $\{x_n,y_n\}$ as the almost orthogonal basis.


\begin{lemma}\label{basisorth}  We have the following formulas for pairings of elements of $\{x_n,y_n\}$:\smallskip
\begin{enumerate}[(i)]

\item $\displaystyle \langle x_m, x_n\rangle = \left\{
\begin{array}{ll}
\delta\langle Q_m,Q_m\rangle, &\text{ if } m=n\\
0, & \text{otherwise}.
\end{array}
\right.$\medskip

\item $\displaystyle \langle x_m, y_n\rangle = \left\{
\begin{array}{ll}
\langle Q_m, Q_m \rangle, & \text{ if }m=n+1\\
 \phi_m \langle Q_m, Q_m \rangle,& \text{ if }m = n\\
 0, & \text{otherwise}.
\end{array}
\right.$\medskip

\item $\displaystyle \langle y_m, y_n\rangle = \left\{
\begin{array}{ll}
A^{-2k-4}\langle Q_k, Q_k \rangle \text{ where }k=\max\{m,n\}, & \text{ if }|n-m|=1\\
A^{-6}(A^{-2m}\sigma_m - A^{-2m+2}\sigma_{m-1})\langle Q_m, Q_m \rangle,& \text{ if }m = n\\
 0, & \text{otherwise}.
\end{array}
\right.$
\end{enumerate}

\end{lemma}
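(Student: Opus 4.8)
The plan is to unfold the relative Hopf pairing into an explicit framed link diagram in $S^3$ and then reduce each of the three computations to an \emph{ordinary} Hopf pairing on $\K$, where Lemma~\ref{Qorth} and the formula for $\langle Q_n,Q_n\rangle$ apply directly, supplemented in part (iii) by Lemmas~\ref{taubarz} and \ref{taubarQQ}. Concretely, $\langle a,b\rangle$ is computed from the diagram consisting of two $0$-framed solid tori forming a Hopf link in $S^3$, carrying $a$ and $b$ respectively, with the two boundary points of $a$ joined to those of $b$ by a trivial pair of external arcs; here $x_n$ and $y_n$ each consist of a copy of $Q_n$ placed in the solid torus together with an arc joining the two boundary points, the arc being trivial for $x_n$ and running once around the torus for $y_n$.

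With this picture, part (i) is immediate: in $\langle x_m,x_n\rangle$ the trivial arc of $x_m$, the trivial arc of $x_n$, and the two external arcs close up into a split unknotted circle, contributing a factor $\delta$, while $Q_m$ and $Q_n$ remain in the two Hopf-linked solid tori; hence $\langle x_m,x_n\rangle=\delta\langle Q_m,Q_n\rangle$, and Lemma~\ref{Qorth} gives the stated value. For part (ii), the trivial arc of $x_m$ lets the joined circle be isotoped so that it lies inside the solid torus carrying $Q_n$ as a longitude parallel to $Q_n$; that torus then carries $z\,Q_n$ in $\K=\laurent[z]$, so $\langle x_m,y_n\rangle=\langle Q_m,z\,Q_n\rangle$. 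Rewriting $z\,Q_n=Q_{n+1}+\phi_nQ_n$ via the recursion $Q_{n+1}=(z-\phi_n)Q_n$ and expanding by bilinearity, Lemma~\ref{Qorth} yields the three listed cases.

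Part (iii) is the substantive one; by symmetry of the pairing we may assume $m\le n$. Now the arc of $y_m$ runs once around its torus, so the joined circle genuinely passes through both solid tori. The key step is to isotope this circle in $S^3$ and recognise the result as the ordinary Hopf pairing of $Q_n$ with $\bar\tau(Q_m)$, the element obtained from $Q_m$ by adjoining a loop as in Figure~\ref{taubar}, up to a framing (kink) correction $A^{-6}$ that reconciles the blackboard framing of the joined circle with the prescribed framing of the loop added by $\bar\tau$; that is, $\langle y_m,y_n\rangle=A^{-6}\langle\bar\tau Q_m,Q_n\rangle$. When $m=n$ this equals $A^{-6}(A^{-2m}\sigma_m-A^{-2m+2}\sigma_{m-1})\langle Q_m,Q_m\rangle$ by Lemma~\ref{taubarQQ}. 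When $n=m+1$, Lemma~\ref{taubarz} gives $\bar\tau Q_m=A^{-2n+2}Q_n+(\text{lower order})$, so orthogonality yields $A^{-6}\cdot A^{-2n+2}\langle Q_n,Q_n\rangle=A^{-2n-4}\langle Q_n,Q_n\rangle$, which is $A^{-2k-4}\langle Q_k,Q_k\rangle$ with $k=\max\{m,n\}=n$. When $n\ge m+2$, Lemma~\ref{taubarz} shows $\bar\tau Q_m$ involves only $Q_i$ with $i\le m+1<n$, so the pairing vanishes by orthogonality.

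The main obstacle is this diagrammatic reduction in part (iii): carrying out the isotopy that collapses the doubly-wrapping joined circle into the Hopf configuration with a $\bar\tau$-loop adjoined, and in particular pinning down the framing correction so that the constant is exactly $A^{-6}$ rather than some other power of $A$. I would verify the simpler identification $\langle x_m,y_n\rangle=\langle Q_m,z\,Q_n\rangle$ of part (ii) with the same care, since it is the prototype for, and a useful consistency check on, the harder computation.
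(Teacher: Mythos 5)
Your proposal is correct and follows essentially the same route as the paper: part (i) via $\langle x_m,x_n\rangle=\delta\langle Q_m,Q_n\rangle$ and orthogonality, part (ii) via $\langle x_m,y_n\rangle=\langle Q_m,zQ_n\rangle$ and the recursion $Q_{n+1}=(z-\phi_n)Q_n$, and part (iii) via the identification $\langle y_m,y_n\rangle=A^{-6}\langle \bar\tau Q_m,Q_n\rangle$ followed by Lemmas~\ref{taubarQQ} and \ref{taubarz}. The $A^{-6}$ framing correction you flag as the main point to verify is exactly what the paper establishes diagrammatically before invoking those lemmas.
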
 

\begin{proof}
\begin{enumerate}[(i)]
\item We first consider pairing two $x$-elements together. Since the $Q_i$'s form an orthogonal basis according to Lemma \ref{Qorth}, we have 

$ \displaystyle\langle x_m,x_n \rangle= 
 \hskip.1in
 \begin{minipage}{1in}\includegraphics[width=1in]{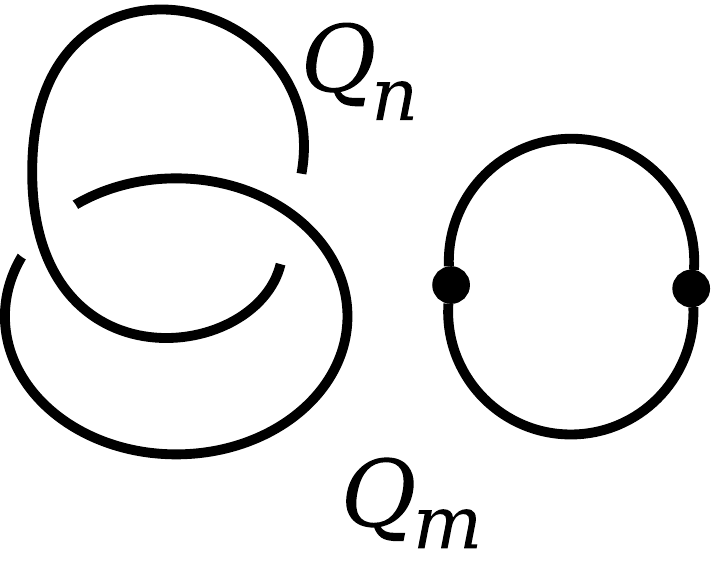}\end{minipage} = \delta\langle Q_m,Q_n\rangle = \left\{\begin{array}{ll}
\delta\langle Q_m,Q_m\rangle, & \text{ if }m=n\\
0, & \text{otherwise}\hspace{.1in}.
\end{array}\right.
$

\item When pairing $x_m$ with $y_n$, we see that
 $$ \displaystyle\langle x_m,y_n \rangle = 
 \hskip.1in
 \begin{minipage}{1in}\includegraphics[width=1in]{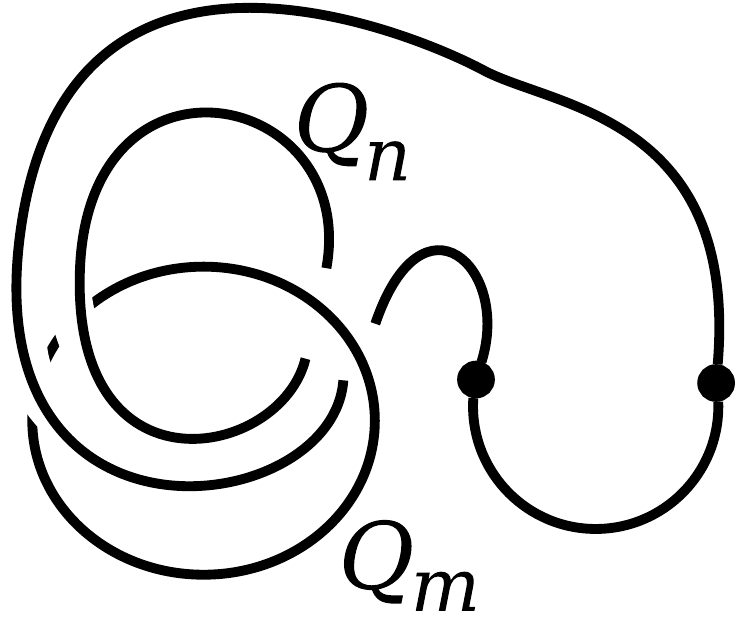}\end{minipage} = \langle Q_m,zQ_n\rangle.$$
Then, since $Q_{n+1} = (z-\phi_n)Q_n$, we have that
$$\langle Q_m,zQ_n\rangle = \langle Q_m, Q_{n+1}+\phi_{n}Q_n \rangle = \left\{
\begin{array}{ll}
\langle Q_{m}, Q_{m} \rangle, & \text{ if } m=n+1\\
 \phi_m \langle Q_m, Q_m \rangle,& \text{ if } m = n\\
 0, & \text{otherwise}\hspace{.1in}.
\end{array}
\right.
$$

\item Finally, we have that

$$\displaystyle \langle y_m,y_n\rangle = \hskip.1in
 \begin{minipage}{1in}\includegraphics[width=1in]{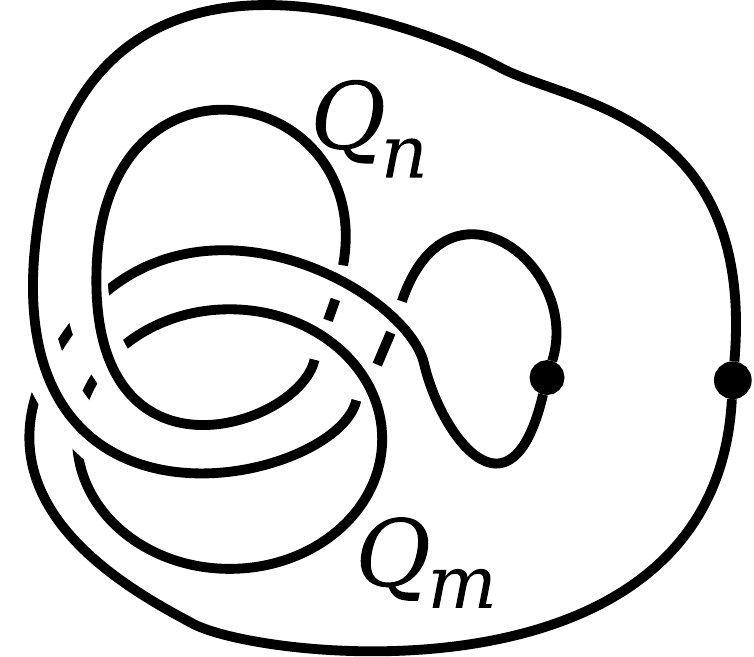}\end{minipage} = 
A^{-6}\hspace{.1in}\begin{minipage}{.7in}\includegraphics[width=.7in]{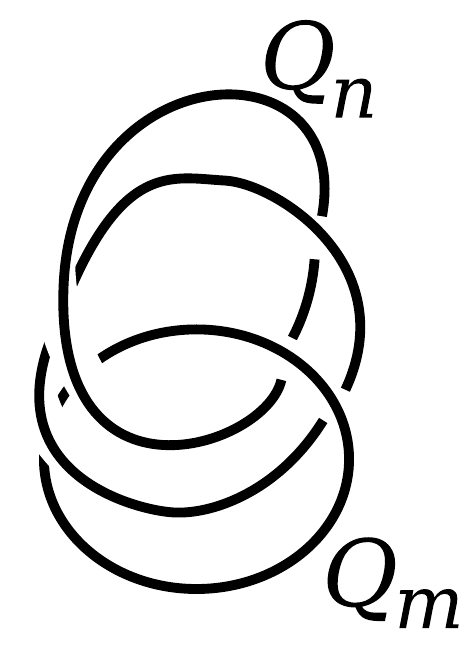}\end{minipage} = A^{-6}\langle Q_m,\bar\tau Q_n\rangle.$$

If $m=n$, then according to Lemma \ref{taubarQQ}, we have that $\langle y_m, y_n \rangle = A^{-6}\langle Q_m,\bar\tau Q_m \rangle = A^{-6}(A^{-2m}\sigma_m-A^{-2m+2}\sigma_{m-1})\langle Q_m, Q_m \rangle.$

Suppose $m = n + 1$.  Then Lemma \ref{taubarz} implies $ \langle y_m, y_n \rangle = A^{-6}\langle Q_m,\bar\tau Q_{m-1}\rangle = A^{-6}A^{-2m+2}\langle Q_m,Q_m\rangle$.

If $n = m + 1$, we have $\langle y_m,y_n\rangle = A^{-6}A^{-2n+2}\langle Q_n, Q_n \rangle$ since the relative Hopf pairing is symmetric.

Suppose $m \geq n + 2$.  Then $\langle y_m, y_n \rangle =A^{-6}\langle Q_m,\bar\tau Q_{n}\rangle = A^{-6}\langle Q_m,A^{-2n}Q_{n+1} + \ldots \rangle = 0$ since $m$ is greater than $n+1$ and the index of each lower order term.  Because the relative Hopf pairing is symmetric, we also have $\langle y_m,y_n \rangle = 0$ if $n \geq m + 2$.  In this way, (iii) follows.

\end{enumerate}

\end{proof}



\section{A finite set of generators for the Kauffman bracket ideal}\label{sec:fingen}

In this section, we outline an algorithm for computing a finite list of generators for the Kauffman bracket ideal $I_\G$ of a genus-1 tangle. However, we must first discuss how the graph basis and the almost-orthogonal basis relate to one another.

\begin{lemma}\label{lemma:removingq}
$$\begin{minipage}{.75in}\includegraphics[height=.8in]{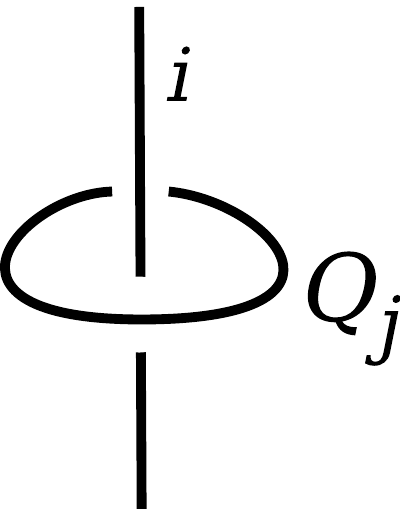}\end{minipage} = \ds\prod_{k=0}^{j-1}(\phi_i - \phi_k) \hspace{.04in}\begin{minipage}{.3in}\includegraphics[height=.65in]{strandcoloredi-eps-converted-to.pdf}\end{minipage}$$
where $\phi_n = -A^{2n+2}-A^{-2n-2}$.  If $j=0$, we let $\ds\prod_{k=0}^{j-1}(\phi_i - \phi_k)=1$.
\end{lemma}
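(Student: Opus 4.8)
The plan is to reduce the lemma to Formula~\ref{eq:removeloop} by recognizing the operation ``encircle a fixed $i$-colored strand'' as an $\laurent$-algebra homomorphism out of $K(\storus)=\laurent[z]$ that sends $z$ to $\phi_i$, and then evaluating this homomorphism on $Q_j=\prod_{k=0}^{j-1}(z-\phi_k)$.

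First I would make the encircling operation precise. Given a skein element $u\in K_R(\storus)$, place $u$ in a regular neighborhood of a loop which bounds a meridian disk met once by the $i$-colored strand; the result is a skein element in a ball whose boundary sphere meets it in exactly two arcs, both colored $i$, so by the two-arc case of Theorem~\ref{thm:gh} it is a scalar multiple of the single $i$-colored strand. Denote that scalar by $\psi_i(u)$. Because the Kauffman bracket relations and the insertion of a skein into a fixed subregion are both $\laurent$-linear, $\psi_i\colon K_R(\storus)\to R$ is $\laurent$-linear.

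Next I would compute $\psi_i$ on the monomial basis $\{1,z,z^2,\dots\}$ of $K(\storus)=\laurent[z]$. One has $\psi_i(1)=1$, since $z^0$ is the empty skein, and $\psi_i(z)=\phi_i$ by Formula~\ref{eq:removeloop}. The monomial $z^n$ is represented by $n$ parallel meridional loops around the strand; applying Formula~\ref{eq:removeloop} to the innermost loop replaces the configuration by $\phi_i$ times the same configuration with one fewer loop (the strand remaining colored $i$), so by induction $\psi_i(z^n)=\phi_i^n$. Hence $\psi_i$ is the $\laurent$-algebra map $\laurent[z]\to R$, $p(z)\mapsto p(\phi_i)$.

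Finally, since $Q_j=\prod_{k=0}^{j-1}(z-\phi_k)$ as an element of $\laurent[z]=K(\storus)$, with the convention $Q_0=1$ matching the empty-product convention in the statement, applying $\psi_i$ yields $\psi_i(Q_j)=\prod_{k=0}^{j-1}(\phi_i-\phi_k)$, which is exactly the asserted coefficient of the $i$-colored strand. The only step requiring genuine care is the \emph{well-definedness} of $\psi_i$ — that the encircled configuration really does lie in the one-dimensional span of the $i$-colored strand, independently of how $u$ is presented — and this is furnished directly by Theorem~\ref{thm:gh}; the remaining steps are routine, so I anticipate no serious obstacle.
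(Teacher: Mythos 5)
Your proposal is correct and follows essentially the same route as the paper: the paper's proof simply expands $Q_j=(z-\phi_0)\cdots(z-\phi_{j-1})$ around the $i$-colored strand and removes each encircling loop via Formula \ref{eq:removeloop}, picking up a factor $(\phi_i-\phi_k)$ per factor. Your formalization of this as the evaluation homomorphism $z\mapsto\phi_i$ (with well-definedness from Theorem \ref{thm:gh}) is just a more carefully packaged version of the same computation.
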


\begin{proof}
From the definition of $Q_j$ and Formula \ref{eq:removeloop}, we see that
$$\begin{array}{r c c c l}
\begin{minipage}{.6in}\includegraphics[height=.8in]{removingq-eps-converted-to.pdf}\end{minipage} & = & \begin{minipage}{.55in}\includegraphics[height=1in]{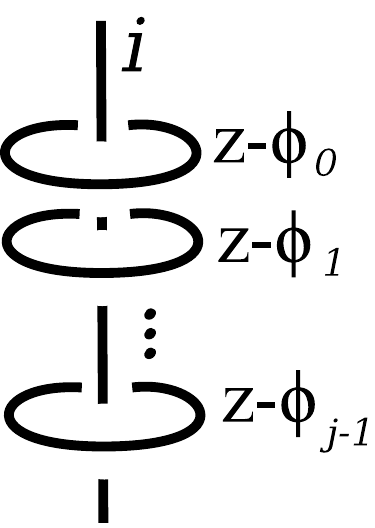}\end{minipage} & = &  \ds\prod_{k=0}^{j-1}(\phi_i - \phi_k) \hspace{.04in}\begin{minipage}{.15in}\includegraphics[height=.65in]{strandcoloredi-eps-converted-to.pdf}\end{minipage}.\\
\end{array}$$
\end{proof}

We can now compute the relative Hopf pairings of graph basis elements with almost-orthogonal basis elements, which we need to compute the generators of $I_\G$.

\begin{proposition}\label{prop:basespairings}
We have that $\langle g_{i,\vareps}, x_j\rangle = \ds\theta(1,\vareps,i)\prod_{k=0}^{j-1}(\phi_i - \phi_k)$ and $\langle g_{i,\vareps}, y_j \rangle = \ds\theta(1,\vareps,i)(\lambda^{1\text{ } i}_\vareps)^{-1}(\lambda^{i\text{ } 1}_\vareps)^{-1}\prod_{k=0}^{j-1}(\phi_i - \phi_k)$ for all non-negative $i$, $\vareps$, and $j$.  Again, if $j=0$, we let $\prod_{k=0}^{j-1}(\phi_i - \phi_k)=1$.  Note that if $j >i$, then both of these pairings are zero.
\end{proposition}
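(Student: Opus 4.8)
The plan is to compute both pairings by picture manipulation in $K_R(S^3)$, reducing the relative Hopf pairing $\langle\,\cdot\,,\,\cdot\,\rangle$ of a graph basis element against an almost-orthogonal basis element to a combination of the local moves collected in Section~\ref{section:graphbasis} together with Lemma~\ref{lemma:removingq}. First I would draw the diagram computing $\langle g_{i,\vareps}, x_j\rangle$: the element $g_{i,\vareps}$ sits in a neighborhood of its trivalent graph (a loop colored $i$ with a little arc colored $\vareps$ joined by two edges colored $1$), and $x_j$ contributes a parallel loop carrying $Q_j$ linked Hopf-style with the $i$-colored loop. The $Q_j$-decorated loop encircling the strand colored $i$ is exactly the configuration addressed by Lemma~\ref{lemma:removingq}, so it evaluates to the scalar $\prod_{k=0}^{j-1}(\phi_i-\phi_k)$ and disappears. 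What remains is the $\vareps$-colored arc sitting on the $i$-colored loop via two $1$-colored edges — a $\theta$-graph configuration — which evaluates to $\theta(1,\vareps,i)$ by the definition of $\theta$ (or by Formula~\ref{eq:bubble}/Theorem~\ref{thm:gh} applied to the appropriate sphere). Combining the two scalars gives $\langle g_{i,\vareps},x_j\rangle = \theta(1,\vareps,i)\prod_{k=0}^{j-1}(\phi_i-\phi_k)$.

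For $\langle g_{i,\vareps},y_j\rangle$, the difference is that $y_j$ closes up through the hole in the other direction (recall $y_n$ versus $x_n$), so when the two solid tori are glued by the relative Hopf configuration the $\vareps$-colored arc of $g_{i,\vareps}$ acquires a full twist at each of its two trivalent vertices relative to the $x_j$ picture. Each such twist is removed using Formula~\ref{eq:lambdatwist}, contributing a factor $(\lambda^{1\,i}_\vareps)^{-1}$ at one vertex and $(\lambda^{i\,1}_\vareps)^{-1}$ at the other (the inverses arising because the twist in $y_j$ is opposite in sign to whatever normalization $g_{i,\vareps}$ carries). After untwisting, the picture is identical to the one for $x_j$, so the rest of the evaluation is the same, yielding $\langle g_{i,\vareps},y_j\rangle = \theta(1,\vareps,i)(\lambda^{1\,i}_\vareps)^{-1}(\lambda^{i\,1}_\vareps)^{-1}\prod_{k=0}^{j-1}(\phi_i-\phi_k)$. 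The final remark that both pairings vanish when $j>i$ follows immediately: $Q_j = \prod_{k=0}^{j-1}(z-\phi_k)$ and the product $\prod_{k=0}^{j-1}(\phi_i-\phi_k)$ contains the factor $\phi_i-\phi_i=0$ once $k$ ranges past $i$, i.e.\ whenever $j-1\geq i$.

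The step I expect to be the main obstacle is pinning down precisely which twist factors appear in the $y_j$ computation and with which orientation — i.e.\ justifying that the corrective scalar is exactly $(\lambda^{1\,i}_\vareps)^{-1}(\lambda^{i\,1}_\vareps)^{-1}$ rather than, say, $\lambda^{1\,i}_\vareps\lambda^{i\,1}_\vareps$ or a single such factor. This requires carefully tracking the framing change induced by the radial/Hopf gluing that distinguishes $y_j$ from $x_j$ and comparing it against the framing conventions built into the definitions of $g_{i,\vareps}$, $x_n$, and $y_n$; once the bookkeeping of the two vertices is done correctly, everything else is a routine application of Lemma~\ref{lemma:removingq} and the definition of $\theta$. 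A clean way to handle this is to first establish the $x_j$ formula, then express $y_j$ in terms of $x_j$-type diagrams plus an explicit local twist at each vertex, and invoke Formula~\ref{eq:lambdatwist} verbatim.
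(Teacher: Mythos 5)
Your proposal matches the paper's proof: both pairings are computed by first applying Lemma \ref{lemma:removingq} to absorb the $Q_j$-decorated loop into the scalar $\prod_{k=0}^{j-1}(\phi_i-\phi_k)$, then (for $y_j$) removing the two vertex twists via Formula \ref{eq:lambdatwist} to produce the factors $(\lambda^{1\,i}_\vareps)^{-1}(\lambda^{i\,1}_\vareps)^{-1}$, and finally evaluating the remaining $\theta$-graph. Your explanation of the vanishing for $j>i$ (the factor $\phi_i-\phi_i$ appears in the product) is also the correct reason behind the paper's closing remark.
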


\begin{proof}
First, consider pairing a graph basis element with $x_j$.  From Lemma \ref{lemma:removingq}, we have that 
$$ \langle g_{i,\vareps}, x_j\rangle = \hspace{.05in}\begin{minipage}{1in}\includegraphics[height=.75in]{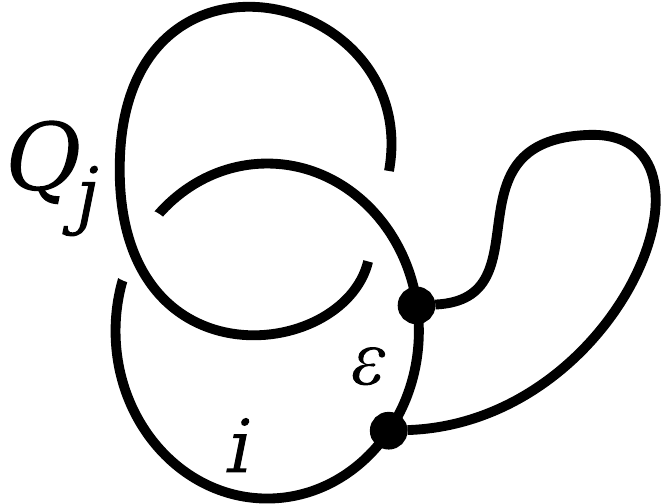}\end{minipage} = \ds\prod_{k=0}^{j-1}(\phi_i - \phi_k)\hspace{.05in} \begin{minipage}{.65in}\includegraphics[height=.5in]{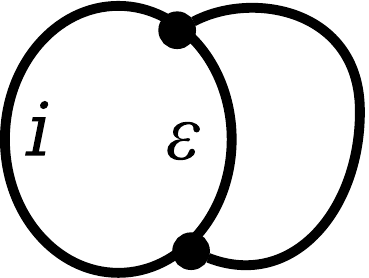}\end{minipage} =  \ds\theta(1,\vareps,i)\prod_{k=0}^{j-1}(\phi_i - \phi_k).$$

For the second case, we have from the Formula \ref{eq:lambdatwist} that

$$\begin{array}{r c  l}
\langle g_{i,\vareps}, y_j\rangle & = & \begin{minipage}{.75in}\includegraphics[height=.85in]{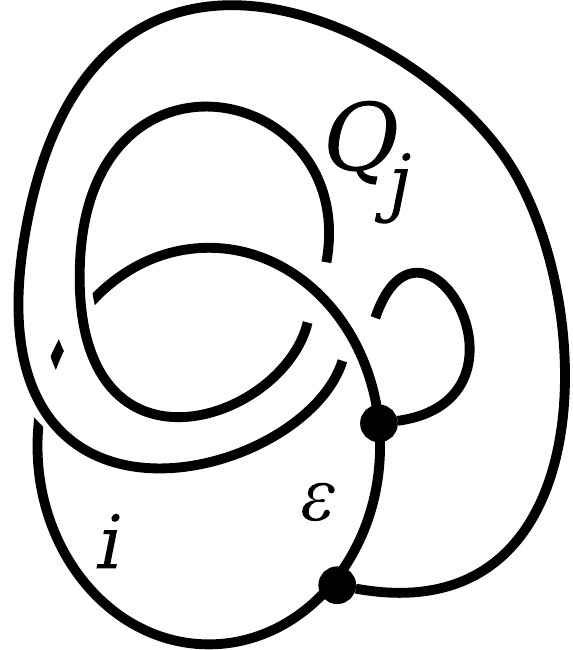}\end{minipage}  = \hspace{.05in}  \ds\prod_{k=0}^{j-1}(\phi_i - \phi_k)\hspace{.05in} \begin{minipage}{.75in}\includegraphics[height=.85in]{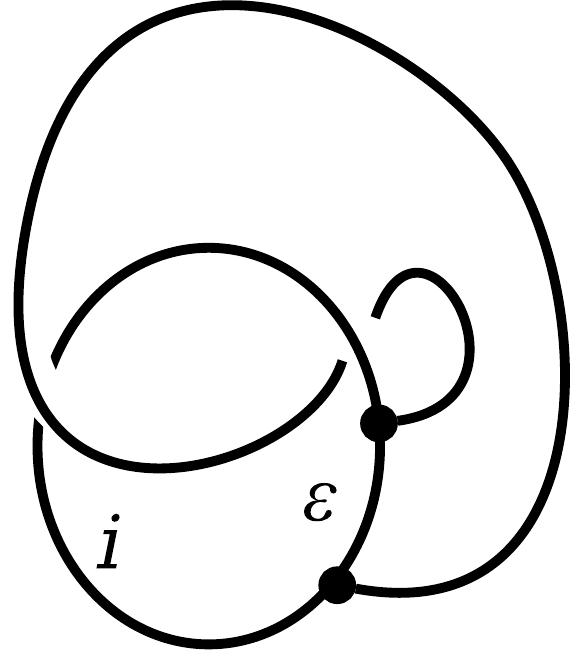}\end{minipage}\\
& & \\
& = & \ds(\lambda^{1\text{ } i}_\vareps)^{-1}(\lambda^{i\text{ } 1}_\vareps)^{-1}\prod_{k=0}^{j-1}(\phi_i - \phi_k)\hspace{.05in} \begin{minipage}{.6in}\includegraphics[height=.5in]{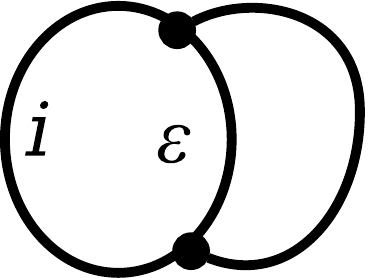}\end{minipage}\\
& = & \ds\theta(1,\vareps,i)(\lambda^{1\text{ } i}_\vareps)^{-1}(\lambda^{i\text{ } 1}_\vareps)^{-1}\prod_{k=0}^{j-1}(\phi_i - \phi_k).
\end{array}$$

\end{proof}

We are now able to outline an algorithm for explicitly computing a finite list of generators for the Kauffman bracket ideal of a genus-1 tangle $\G$.  Let $L$ be a closure of $\G$.  Then $L$ may be viewed as the relative Hopf pairing of $\G$ with some complementary genus-1 tangle $\mathcal{H}$.  Since the set $\{x_n,y_n\}$ is a basis for $K(\storus, 2)$,  we have that $\mathcal{H}$ can be written as a linear combination of elements of $\{x_n,y_n\}$.  Thus, $\langle L \rangle = \langle \G, \mathcal{H}\rangle$ is a linear combination of pairings $\langle \G, x_j \rangle$ and $\langle \G, y_j \rangle$.  Since we are considering only non-empty links, we have that $\langle L \rangle^\prime = \langle L \rangle/\delta = \langle \G, \mathcal{H} \rangle/\delta \in \laurent$.  So, $\langle \G, x_j \rangle /\delta$ and $\langle \G, y_j \rangle /\delta$ form a generating set for $I_\G$.

To compute these generators, we view $\G$ as an element of $K_R(\storus,2)$ which allows us to write $\G$ as a linear combination $\sum c_{i,\vareps} g_{i,\vareps}$ of graph basis elements.  We do this by computing the doubling pairing $\langle \G, g_{i,\vareps}\rangle_D$ for each $(i,\vareps)$ using Theorem \ref{thm:gh}, along with the fusion formula given in Theorem \ref{thm:fusion} and Formulas \ref{eq:bubble} - \ref{eq:removeloop}.  Since the graph basis is orthogonal with respect to the doubling pairing, we have that $c_{i,\vareps} = \langle \G, g_{i,\vareps}\rangle_D/\langle g_{i,\vareps}, g_{i,\vareps}\rangle_D$ for any $(i,\vareps)$.

Then, $\langle \G, x_j \rangle /\delta = \langle \sum c_{i,\vareps} g_{i,\vareps}, x_j \rangle /\delta = \sum (c_{i,\vareps}/\delta)\langle g_{i,\vareps}, x_j \rangle$ and $\langle \G, y_j \rangle/\delta = \langle \sum c_{i,\vareps} g_{i,\vareps} ,y_j\rangle/\delta =  \sum (c_{i,\vareps}/\delta)\langle g_{i,\vareps}, y_j \rangle$.  We compute these pairings using Proposition \ref{prop:basespairings} which states that $\langle g_{i,\vareps}, x_j \rangle$ and $\langle g_{i,\vareps}, y_j \rangle$ are non-zero only if $i \geq j$.  There are only finitely many $j$ less than or equal to a given $i$, and there are finitely many non-zero terms in the linear combination $\G = \sum c_{i,\vareps} g_{i,\vareps}$.  Therefore, the set of all non-zero $\langle \G, x_j \rangle/\delta$ and $\langle \G, y_j \rangle/\delta$ is a finite generating set for the Kauffman bracket ideal $I_\G$.


\section{The Kauffman bracket ideal of $\f$}\label{section:1057}


\begin{proof}[Proof of Theorem \ref{thm:1057}]

\begin{lemma}
$I_\f$ is generated by the following elements:
$$\pair{\f,x_i}/\delta =  \begin{minipage}{.95in}\includegraphics[height=1.3in]{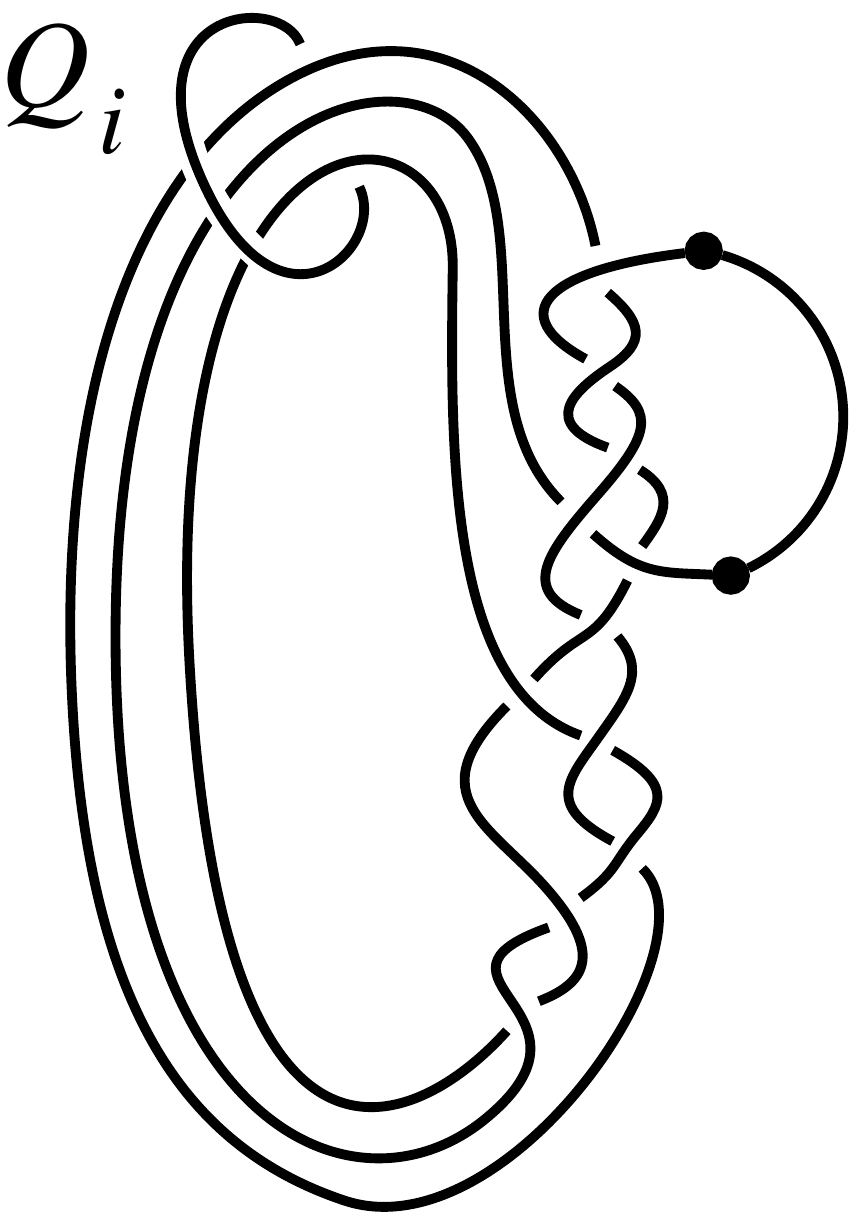}\end{minipage}/\delta \text{ and }
\pair{\f,y_i} /\delta =  \begin{minipage}{.95in}\includegraphics[height=1.3in]{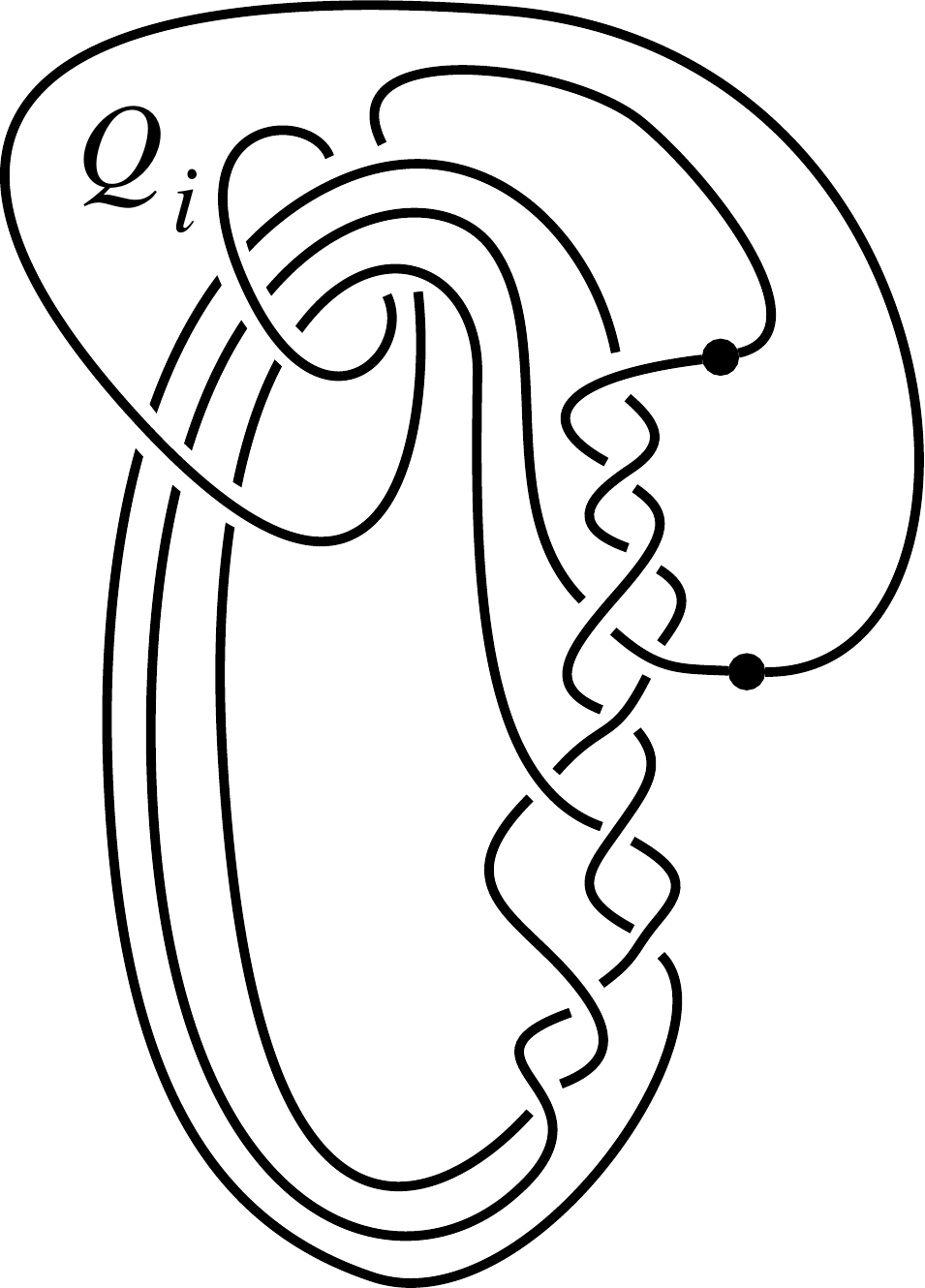}\end{minipage}/\delta\\$$
where $0 \leq i \leq 3$. 
\end{lemma}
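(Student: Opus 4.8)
The plan is to run the algorithm of Section~\ref{sec:fingen} and then argue that only colors at most $3$ can occur. First I would view $\f$ as an element of $K_R(\storus,2)$ and expand it in the graph basis, $\f=\sum_{i,\vareps}c_{i,\vareps}\,g_{i,\vareps}$, where by orthogonality (Theorem~\ref{thm:doublingpairing}) the coefficients are $c_{i,\vareps}=\langle\f,g_{i,\vareps}\rangle_D\,\Delta_i\Delta_\vareps/\theta(1,i,\vareps)^2$. To evaluate each doubling pairing $\langle\f,g_{i,\vareps}\rangle_D$ one isotopes the doubled diagram into a neighborhood of a trivalent graph, applies the fusion formula (Theorem~\ref{thm:fusion}) along the strands of the $10_{57}$ braid, and repeatedly reduces using Formulas~\ref{eq:bubble}--\ref{eq:removeloop} and Theorem~\ref{thm:gh}; I would push the bookkeeping of this reduction into Appendix~\ref{app:linearcombo}.

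The essential point to extract is that $c_{i,\vareps}=0$ for every $i>3$, so that $\f=\sum_{0\le i\le 3,\ \vareps}c_{i,\vareps}\,g_{i,\vareps}$ is a finite sum supported in colors at most $3$. Heuristically this is forced because $\f$ is the partial closure of a braid on four strands, so an isotopy makes $\f$ meet a meridian disk in few points and the fusion expansion never produces an internal edge of color higher than $3$; in the doubled picture this shows up as every cutting sphere carrying only low-colored arcs, so Theorem~\ref{thm:gh} annihilates the contributions with $i\ge 4$. If one does not want to invoke such a soft argument, one simply notes that the truncation is witnessed directly by the finitely many explicit computations $\langle\f,g_{4,\vareps}\rangle_D=0$ performed in Appendix~\ref{app:linearcombo}.

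Given this truncated expansion I would finish exactly as in Section~\ref{sec:fingen}. Any closure $L$ of $\f$ equals $\langle\f,\mathcal{H}\rangle$ for some genus-$1$ tangle $\mathcal{H}$, and writing $\mathcal{H}$ in the almost-orthogonal basis $\{x_n,y_n\}$ shows that $I_\f$ is generated by all $\langle\f,x_j\rangle/\delta$ and $\langle\f,y_j\rangle/\delta$, $j\ge 0$. By Proposition~\ref{prop:basespairings}, $\langle g_{i,\vareps},x_j\rangle=\langle g_{i,\vareps},y_j\rangle=0$ whenever $j>i$; combining this with the expansion of $\f$ in colors $\le 3$ gives, for every $j\ge 4$, $\langle\f,x_j\rangle=\sum_{i\le 3,\ \vareps}c_{i,\vareps}\langle g_{i,\vareps},x_j\rangle=0$ and likewise $\langle\f,y_j\rangle=0$. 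Hence the generating set collapses to the $\langle\f,x_i\rangle/\delta$ and $\langle\f,y_i\rangle/\delta$ with $0\le i\le 3$, which are precisely the eight skein pictures displayed in the statement.

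I expect the main obstacle to be the second step: genuinely certifying that no internal color exceeds $3$ in the graph-basis expansion of $\f$. This is where the specific geometry of the $10_{57}$ braid enters, and it is the part that ultimately rests on the explicit (but routine) graph reductions collected in Appendix~\ref{app:linearcombo}; everything else is formal manipulation of the doubling pairing and the relative Hopf pairing already set up in Sections~\ref{section:graphbasis}--\ref{sec:fingen}.
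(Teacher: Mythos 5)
Your proposal follows essentially the same route as the paper: expand $\f=\sum c_{i,\vareps}g_{i,\vareps}$ in the graph basis via the doubling pairing, show the expansion is supported in colors $i\le 3$, and then invoke Proposition~\ref{prop:basespairings} to kill $\pair{\f,x_j}$ and $\pair{\f,y_j}$ for $j\ge 4$; this is exactly what the paper does, and your primary (``soft'') justification of the truncation is the right one --- the admissibility constraints that appear after the first fusion step in Appendix~\ref{app:linearcombo} force $\langle\f,g_{i,\vareps}\rangle_D=0$ unless $i\in\{1,3\}$, for \emph{all} $i$ at once. One caveat: your proposed fallback, ``simply note the explicit computations $\langle\f,g_{4,\vareps}\rangle_D=0$,'' would not by itself close the argument, since a finite check at $i=4$ says nothing about $i=5,6,\dots$; the vanishing for all large $i$ genuinely has to come from the admissibility bound in the general formula (or equivalently from your meridian-disk argument), not from finitely many sample evaluations.
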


\begin{proof}
As described in Section \ref{sec:fingen}, we write $\f$ as a linear combination $\f = \sum c_{i,\vareps}g_{i,\vareps}$ of graph basis elements, and we have that \begin{equation}\label{eqn:cieps} c_{i,\vareps} = \langle \f, g_{i,\vareps}\rangle_D/\langle g_{i,\vareps}, g_{i,\vareps}\rangle_D.\end{equation}  The formula for $\langle g_{i,\vareps}, g_{i,\vareps}\rangle_D$ is given in Theorem \ref{thm:doublingpairing}.  We need to compute $\langle \f, g_{i,\vareps}\rangle_D$. 

We use Theorems \ref{thm:fusion} and \ref{thm:gh} and Formulas \ref{eq:bubble} - \ref{eq:removeloop} to find a general formula for $\langle \f, g_{i,\vareps}\rangle_D$. The computation of this formula is given in Appendix \ref{app:linearcombo}.  From the second line of that computation, one can see using admissibility that $\langle \f, g_{i,\vareps}\rangle_D = 0$ unless $i = 1$ or $i = 3$.  So, we need only compute four coefficients: $c_{1,0}$, $c_{1,2}$, $c_{3,2}$, and $c_{3,4}$.  Using some code from \cite{h}, we implemented the formula derived in Appendix \ref{app:linearcombo} in Mathematica to find explicit expressions for these coefficients.  As $\langle \f, g_{1,0}\rangle_D$ turned out to be zero, we have that $c_{1,0} = 0$.  The remaining three coefficients are as follows:\\
\begin{align*}
c_{1,2} &  =  \ds \frac{\langle \f, g_{1,2}\rangle_D}{\langle g_{1,2},g_{1,2}\rangle_D} =  \begin{minipage}{.9in}\includegraphics[height=1.3in]{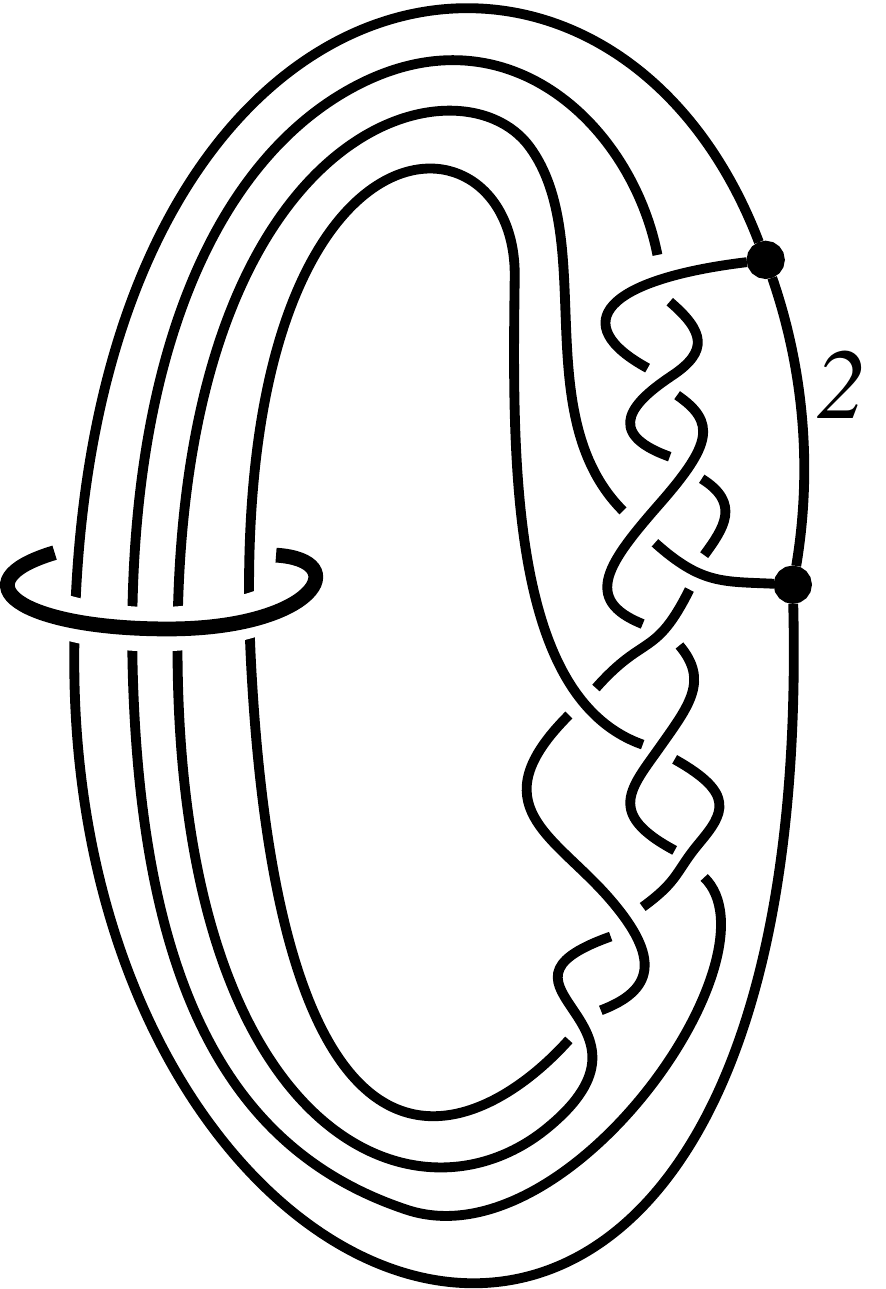}\end{minipage}/\langle g_{1,2},g_{1,2}\rangle_D\\
& = (1 + A^4 + A^8)^{-1} (-A^{-21} + 2A^{-17} -4A^{-13} +4A^{-9} -3A^{-5}+\\
& \hspace{.2in}  2A^{-1} +A^3-4A^7+4A^{11}-4A^{15} + 2A^{19} - A^{23}),\\
\end{align*}
\begin{align*}
c_{3,2} &   =  \ds \frac{\langle \f, g_{3,2}\rangle_D}{\langle g_{3,2},g_{3,2}\rangle_D} =  \begin{minipage}{.9in}\includegraphics[height=1.3in]{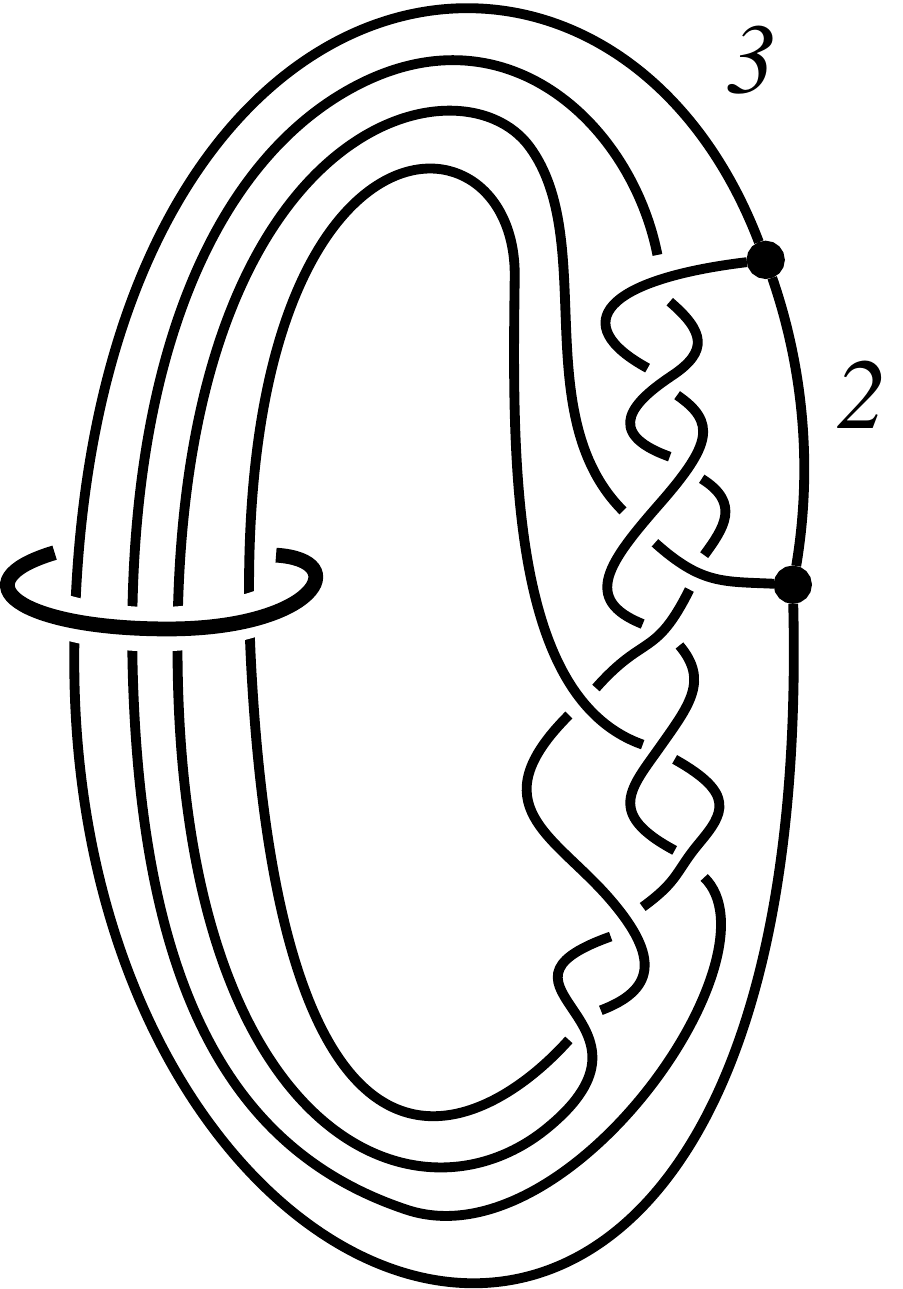}\end{minipage}/\langle g_{3,2},g_{3,2}\rangle_D\\
& =  (A^7 + A^{11} + A^{15} + A^{19})^{-1}(1-A^4+A^8-A^{16}+A^{20}), \text{ and}\\
c_{3,4} &  =  \ds \frac{\langle \f, g_{3,4}\rangle_D}{\langle g_{3,4},g_{3,4}\rangle_D} =  \begin{minipage}{.9in}\includegraphics[height=1.3in]{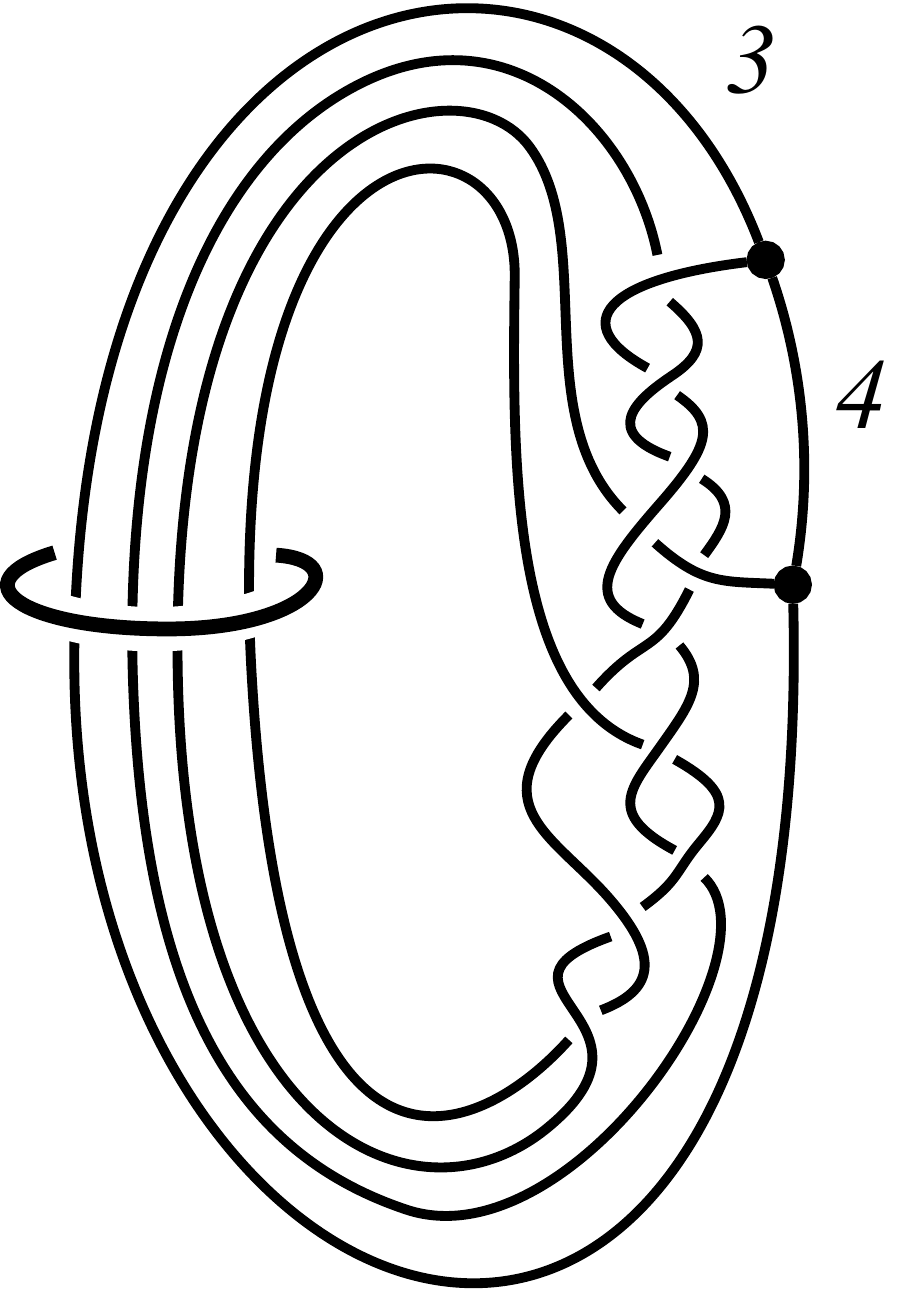}\end{minipage}/\langle g_{3,4},g_{3,4}\rangle_D = A^3.
\end{align*}
So $\f = c_{1,2}g_{1,2} + c_{3,2} g_{3,2} + c_{3,4} g_{3,4}$.

Now, we use Lemma \ref{lemma:removingq} and Proposition \ref{prop:basespairings} to compute $\langle \f, x_i\rangle/\delta$ and $\langle \f, y_i\rangle/\delta$ for any $i$ to obtain a list of generators for $I_\f$.  According to Proposition \ref{prop:basespairings}, since $\f = c_{1,2}g_{1,2} + c_{3,2} g_{3,2} + c_{3,4} g_{3,4}$, we have that $\langle \f, x_i\rangle/\delta$ and $\langle \f, y_i\rangle/\delta$ are zero if $i > 3$.   So, $I_\f$ is generated by $\langle \f, x_i\rangle/\delta$ and $\langle \f, y_i\rangle/\delta$ where $i \leq 3$.  An explicit expression for each generator is given in Appendix \ref{app:generators}.
\end{proof}

Let $g_1, \ldots g_8$ denote these eight generators rescaled by the power of $A$ necessary to make the lowest degree term a constant.\footnote{Actually, as seen in Appendix \ref{app:generators}, $g_4$ is a multiple of $g_3$, and $g_8$ is a multiple of $g_7$.  Thus, $g_4$ and $g_8$ are not needed in the list of generators.}  Since $A$ is a unit in $\laurent$, we have that the ideal $\langle g_1, \ldots, g_8 \rangle_{\laurent} = I_\f$.  Using the GroebnerBasis command in Mathematica, we see that $\{11, 4-A^4\}$ is a generating set for the ideal $\langle g_1, \ldots, g_8\rangle_{\mathbb{Z}[A]}$.

\begin{lemma}
The Kauffman bracket ideal $I_\f = \langle 11, 4-A^4\rangle_{\laurent}$, and $I_\f$ is non-trivial in $\laurent$.
\end{lemma}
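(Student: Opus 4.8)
The plan is to verify the two claims in the lemma separately, both of which are essentially bookkeeping at this stage. First, for the equality $I_\f = \langle 11, 4-A^4\rangle_{\laurent}$: the preceding lemma expresses $\f = c_{1,2}g_{1,2} + c_{3,2}g_{3,2} + c_{3,4}g_{3,4}$ and gives the eight pairing generators $\langle\f,x_i\rangle/\delta$ and $\langle\f,y_i\rangle/\delta$ for $0\le i\le 3$; after rescaling by the appropriate power of $A$ (a unit in $\laurent$) these become polynomials $g_1,\dots,g_8\in\mathbb{Z}[A]$ with $\langle g_1,\dots,g_8\rangle_{\laurent} = I_\f$. The Mathematica GroebnerBasis computation cited in the text shows $\langle g_1,\dots,g_8\rangle_{\mathbb{Z}[A]} = \langle 11, 4-A^4\rangle_{\mathbb{Z}[A]}$, and extending scalars from $\mathbb{Z}[A]$ to $\laurent = \mathbb{Z}[A,A^{-1}]$ preserves the equality of these ideals, so $I_\f = \langle 11, 4-A^4\rangle_{\laurent}$. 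I would also note (from the footnote) that $g_4, g_8$ are redundant, being $\laurent$-multiples of $g_3, g_7$, so the list genuinely reduces to six generators if one wants; this does not affect the ideal.

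Second, for non-triviality: I need to show $1 \notin \langle 11, 4-A^4\rangle_{\laurent}$. The cleanest route is to exhibit a ring homomorphism out of $\laurent$ that kills the ideal but not $1$. Reducing coefficients mod $11$ gives $\laurent \to \mathbb{F}_{11}[A,A^{-1}]$, under which the ideal becomes $\langle 4 - A^4\rangle$, i.e. $\langle A^4 - 4\rangle$. Since $4 = 2^2$ and the multiplicative order considerations in $\mathbb{F}_{11}$ make $A^4 - 4$ a non-unit (indeed $5^4 = 625 = 4 \pmod{11}$, so $A=5$ is a root and $A^4-4$ is not a unit in $\mathbb{F}_{11}[A,A^{-1}]$; concretely the quotient $\mathbb{F}_{11}[A,A^{-1}]/(A^4-4)$ is nonzero), the image of the ideal is proper, hence $1$ is not in it. Therefore $I_\f \ne \laurent$, and in fact $I_\f \subseteq \langle 11, 4-A^4\rangle$ shows every closure $L$ of $\f$ has $\langle L\rangle' \in \langle 11, 4-A^4\rangle$; specializing $A$ suitably recovers the Jones polynomial congruence stated in Theorem~\ref{thm:1057}.

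The only genuine obstacle here is that the heart of the first claim — the Groebner basis computation and, further upstream, the explicit formula for $\langle\f,g_{i,\vareps}\rangle_D$ from Appendix~\ref{app:linearcombo} and the resulting coefficients $c_{1,2}, c_{3,2}, c_{3,4}$ — is a machine computation rather than something to reprove by hand. So the ``proof'' of this lemma is really a matter of correctly assembling the outputs of Appendices~\ref{app:linearcombo} and \ref{app:generators}: confirming the rescaling by powers of $A$ is legitimate (units), confirming the Groebner basis output, and confirming the redundancy of $g_4, g_8$. I would present it as: cite the generator list from Appendix~\ref{app:generators}, invoke the GroebnerBasis result, deduce the ideal equality over $\laurent$, then give the short mod-$11$ argument for non-triviality. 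I expect no step to require new ideas beyond what is already set up; the verification that the mod-$11$ reduction of $\langle 11, 4-A^4\rangle$ is proper is the one place a couple of lines of honest (but trivial) ring theory are needed.
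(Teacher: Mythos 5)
Your proof takes essentially the same route as the paper's: invoke the Groebner basis computation to identify $\langle g_1,\ldots,g_8\rangle_{\mathbb{Z}[A]}$ with $\langle 11, 4-A^4\rangle_{\mathbb{Z}[A]}$, pass to $\laurent$ (the paper writes out both containments explicitly where you appeal to extension of scalars, but the content is identical), and then show the ideal dies under reduction mod $11$. One arithmetic slip to fix: $5^4 = 625 \equiv 9 \pmod{11}$, not $4$, so $A=5$ is \emph{not} a root of $A^4-4$ over $\mathbb{F}_{11}$; the correct specialization is $A=3$ (since $3^4=81\equiv 4$), which is exactly the homomorphism $\rho$ the paper uses. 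The $5$ in Theorem~\ref{thm:1057} is the value of $\sqrt{t}=A^{-2}\equiv 3^{-2}\equiv 5\pmod{11}$, not of $A$. Your argument survives the slip, because your fallback observation --- that $A^4-4$ is not a unit in $\mathbb{F}_{11}[A,A^{-1}]$, the units there being nonzero scalars times powers of $A$ --- already shows the image ideal is proper.
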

\begin{proof}

We first show that $I_\f \subseteq \langle 11, 4-A^4\rangle_{\laurent}$.  Let $f \in I_\f$.  Then $f = f_1g_1 + \ldots f_8 g_8$ for some $f_i \in \laurent$.  Since $\langle g_1, \ldots, g_8 \rangle_{\mathbb{Z}[A]} = \langle 11, 4-A^4\rangle_{\mathbb{Z}[A]}$, we have for each $i$ that $g_i = 11r_i + (4-A^4)s_i$ for some $r_i$ and $s_i$ in $\mathbb{Z}[A]$.  Then,
\[\begin{array}{r c l }
f &  = & f_1 g_1 + \ldots f_8 g_8 \\
& = & f_1 (11r_1 + (4-A^4)s_1) + \ldots + f_8 (11r_8 + (4-A^4)s_8)\\
& = & 11(f_1 r_1 + \ldots f_8 r_8) + (4-A^4) (f_1 s_1 + \ldots + f_8 s_8)
\end{array}\] which is an element of $\langle 11, 4-A^4\rangle_{\laurent}$.

It is easy to see that $\langle 11, 4 - A^4 \rangle \subseteq I_\f$.  Since $11$ and $4-A^4$ are elements of $\langle g_1, \ldots, g_8 \rangle_{\mathbb{Z}[A]}$, it follows immediately they are both in $\langle g_1, \ldots, g_8 \rangle_{\laurent} = I_\f $.  Therefore, $I_\f = \langle 11, 4-A^4 \rangle_{\laurent}$.

It remains only to show that $I_\f = \langle 11, 4-A^4\rangle_{\laurent}$ is non-trivial.  Let $\rho :\laurent \rightarrow \mathbb{Z}_{11}$ be the map which sends $A$ to $3$.  It is easy to see that $\rho$ is a ring homomorphism.  The image of $I_\f$ under $\rho$ is the ideal $\langle 11, 4 - 81 \rangle = \langle 11 \rangle = \langle 0 \rangle$ in  $\mathbb{Z}_{11}$.  So, $I_\f \subseteq \ker \rho$.  Since $\rho$ is not the trivial homomorphism, this implies that $I_\f \neq \laurent$.
\end{proof} 

Recall that the Jones polynomial of an oriented link $L$ is defined to be $J_L(\sqrt{t}) = A^{-3\omega(D)} \langle D \rangle^\prime$ where $D$ is an oriented diagram of $L$ with writhe $\omega(D)$ and $t = A^{-4}$.

We show that if $L$ is a closure of $\f$, then $J_L(\sqrt{t})$ evaluated at $\sqrt{t} = 5$ is $0 \pmod{11}$.  Let $D$ be an oriented diagram for $L$.  Then $\langle D \rangle^\prime \in I_\f$ and thus $A^{-3\omega(D)}\langle D \rangle^\prime \in I_\f$.  So, $\rho (A^{-3\omega(D)}\langle D \rangle^\prime) = 0$ in $\mathbb{Z}_{11}$.  Note that $\sqrt{t} = 5$ implies $t = 25 = 3 \pmod{11}$ and $3^{-4} = \frac{1}{81} = \frac{1}{4} = 3 \pmod{11}$.  Therefore, $J_L(\sqrt{t})|_{\sqrt{t}=5} = \rho (A^{-3\omega(D)}\langle D \rangle^\prime) = 0 \pmod{11}$.

\end{proof}

\begin{proposition}
The genus-1 tangle $\f$ contains no local knots.
\end{proposition}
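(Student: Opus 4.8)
The plan is to exploit that $\f$ is the partial closure of a braid. Write $\f$ as the partial closure of the $4$-strand braid $B$ whose closure is $10_{57}$, and regard $B$ as a tangle in $B^3=D^2\times I$, so that its strands are monotone in the $I$-coordinate. In the partial-closure construction the three arcs added in the attached $1$-handle $D^2\times I'$ may likewise be drawn monotone in the core direction $I'$. Since $\storus$ is obtained by gluing $D^2\times I'$ to $D^2\times I$ end to end, the meridional fibration $\storus\to S^1$ restricts on each cylinder to projection onto $I$ or $I'$; hence $\f$ is isotopic in $\storus$ to a tangle in \emph{closed-braid position} --- transverse to every meridian disk, crossing each one with a consistent orientation, and with its two endpoints on two distinct meridian circles of $\partial(\storus)$. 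First I would make this reduction precise by tracing the single component of $\f$ through the strands and closure arcs; this uses that $\f$ has one component, which corresponds to the braid permutation of $B$ being a $4$-cycle.

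The heart of the argument is the lemma: \emph{a tangle in closed-braid position in $\storus$ contains no local knots.} Given a ball $B_0\subset\storus$ meeting $\f$ in a single arc $\alpha$, fix a meridian disk $D$ at a generic level. The standard move is to isotope $\partial B_0$, keeping $\f$ fixed, so as to remove every circle of $\partial B_0\cap D$ by an innermost-disk argument, using the irreducibility of $\storus$; the key point is that because $\f$ is transverse to $D$ and monotone, a subdisk of $D$ bounded by an innermost circle can be taken to miss $\alpha$ --- a monotone arc cannot enter and leave such a subdisk on the same side --- so the compression never drags part of $\alpha$ across $D$. Once $\partial B_0\cap D=\emptyset$, the ball $B_0$ lies in the complement of $D$, a copy of $D^2\times I$, inside which $\alpha$ is a monotone arc; such an arc is ambient-isotopic in $\mathbb{R}^3$ to a straight segment and so has trivial local-knot type. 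As local-knot type is an isotopy invariant independent of the surrounding ball, $\alpha$ is unknotted, and since every ball meeting $\f$ in a single arc yields such an $\alpha$, the lemma --- and the proposition --- follows.

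The step I expect to be the main obstacle is the innermost-disk cleanup: one must guarantee that removing a circle of $\partial B_0\cap D$ never forces part of $\alpha$ across $D$, which requires choosing the innermost circle and the side of $\partial B_0$ to compress with care, organizing the cases by where the two endpoints of $\alpha$ sit relative to that circle, with monotonicity of $\f$ closing each case. As an independent check one can argue via connected sums: $10_{57}$ itself is a closure of $\f$ (cap the two free ends by the short arc rebuilding the full braid closure), $10_{57}$ is prime, and any nontrivial local knot of $\f$ would be a connect summand of every closure and hence equal to $10_{57}$; that remaining possibility is excluded either by the closed-braid picture above or, concretely, by exhibiting a closure of $\f$ of smaller Seifert genus.
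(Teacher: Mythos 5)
Your closing ``independent check'' is in fact the paper's entire proof, and it is the part of your write-up that works: a local knot of $\f$ is a connect summand of every closure, in particular of the prime knot $10_{57}$, so the only candidate nontrivial local knot is $10_{57}$ itself. But you then leave that last case open, deferring either to your closed-braid argument or to ``a closure of smaller Seifert genus'' that you do not exhibit. The paper closes this case with the ideal it has just computed: if $10_{57}$ were a local knot, every closure $L$ would satisfy $\langle L \rangle^\prime = \langle 10_{57}\rangle^\prime \langle K \rangle^\prime$, so $I_\f$ would be the principal ideal generated by $\langle 10_{57}\rangle^\prime$; since $11 \in I_\f$ by Theorem \ref{thm:1057}, the constant $11$ would have to be a multiple of $\langle 10_{57}\rangle^\prime$, which is impossible because that polynomial is not a monomial. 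You should finish your argument this way rather than leaving the final exclusion to an unproven alternative.

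The main route you propose --- put $\f$ in closed-braid position and prove that a tangle transverse to every meridian disk admits no local knots --- is a genuinely different, more geometric strategy, but as written it has a real gap exactly where you predicted. In the innermost-disk step, the subdisk $D^\prime \subset D$ bounded by an innermost circle of $\partial B_0 \cap D$ may meet $\f$ in several of its (three or four) transverse intersection points with $D$, and these need not lie on $\alpha$: the rest of the single component of $\f$ winds around the solid torus and can pass both through $D^\prime$ and through whichever region you intend to push $\partial B_0$ across. Your monotonicity remark constrains only $\alpha$, not $\f \setminus \alpha$, and it also does not guarantee that the sphere formed from $D^\prime$ and a subdisk of $\partial B_0$ bounds a ball disjoint from $B_0$ (it could contain $B_0$), so the proposed compression need not be an ambient isotopy of $\partial B_0$ in the complement of $\f$. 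Making this lemma rigorous appears to require braid-foliation-style bookkeeping in the spirit of Birman--Menasco, which is far more work than the four-line primeness-plus-ideal argument; I would base the proof on the latter and, if you wish, record the closed-braid lemma separately as a conjecture or remark.
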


\begin{proof}
The simplest closure of $\f$ is the knot $10_{57}$ which is prime.  Therefore, $\f$ has no local knots unless $10_{57}$ itself is a local knot.  In that case, any closure $L$ of $\f$ may be written as the connect sum of $10_{57}$ with some knot $K \subset S^3$, and we have that $\langle L \rangle ^\prime = \langle 10_{57} \rangle ^\prime \langle K \rangle ^\prime$.  Thus, $I_\f$ is the principal ideal generated by $\langle 10_{57} \rangle ^\prime$.  However, since $11 \in I_\f$, this means that $11$ is a multiple of $\langle 10_{57} \rangle ^\prime$ which is impossible.
\end{proof}


\section{Partial closures}\label{section:partialclosures}

Recall, the partial closure of a $(B^3,2n)$-tangle $\T$ is a genus-1 tangle obtained from $\T$ by gluing a copy of $D^2 \times I$ containing $n-1$ properly embedded arcs to $B^3$ as indicated in Fig. \ref{fig:partialclosure}.  We denote the partial closure by $\hat \T$.  We can describe this more colloquially as partially closing off $\T$ with $n-1$ simple arcs and placing the hole of the solid torus as indicated in Fig. \ref{fig:partialclosure}.

Theorem \ref{thm:partialclosures} states that in the case of a $(B^3,4)$-tangle whose partial closure has a single component, the Kauffman bracket ideal of the partial closure is exactly the Kauffman bracket ideal of the original $(B^3,4)$-tangle.  Before proving this result, we need the following lemma.

\begin{lemma}\label{lemma:partialclosure}
Let $\T$ be a $(B^3,4)$-tangle and let $\hat \T$ denote the genus-1 tangle which is the partial closure of $\T$.  Then the Kauffman bracket ideal $I_{\hat\T} = \langle \langle d(\T)\rangle^\prime,  (1-A^{-4})\langle n(\T) \rangle^\prime \rangle$.\\
\end{lemma}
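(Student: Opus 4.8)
The plan is to compute the Kauffman bracket ideal $I_{\hat\T}$ directly from the algorithm of Section \ref{sec:fingen}, by expanding the partial closure $\hat\T$ in the graph basis of $K_R(\storus,2)$ and pairing with the almost-orthogonal basis. The key observation is that when we build $\hat\T$ by gluing a single simple arc to the $(B^3,4)$-tangle $\T$ as in Fig. \ref{fig:partialclosure}, the skein element $\hat\T$ wraps around the solid torus with geometric intersection number at most one with a meridional disk; in fact the arc we glued on can be isotoped to sit entirely in the $D^2\times I$ piece. Concretely, $\hat\T$ can be written in the form of $\T$ closed up by the "numerator" or "denominator" type arc but with the closing strand running once around the hole. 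So when we decompose $\hat\T = \sum c_{i,\vareps} g_{i,\vareps}$ using the doubling pairing, only graph basis elements $g_{i,\vareps}$ with $i\le 1$ can appear (the core of the solid torus is hit at most once), i.e. only $g_{1,0}$ and $g_{1,2}$, and possibly $g_{0,1}$ if there is a closed-off component — but since $\hat\T$ is required to have exactly the structure coming from a partial closure, the relevant terms are $g_{1,0}$ and $g_{1,2}$.

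First I would make this precise: isotope $\hat\T$ so that it meets a fixed meridional disk of the solid torus in exactly the two boundary points of the one-strand part, recognizing $\hat\T$ as an element of $K_R(\storus,2)$ supported (in the graph basis) on $\{g_{1,0}, g_{1,2}\}$. Then I would compute the two doubling pairings $\langle \hat\T, g_{1,0}\rangle_D$ and $\langle \hat\T, g_{1,2}\rangle_D$; using Theorem \ref{thm:gh} to cut along a sphere, these reduce to Kauffman brackets of closures of $\T$ inside $S^3$. The natural guess, matching the statement, is that $\langle\hat\T,g_{1,0}\rangle_D$ is (a unit times) $\langle d(\T)\rangle$ and $\langle\hat\T,g_{1,2}\rangle_D$ packages $\langle n(\T)\rangle$ together with $\langle d(\T)\rangle$, because the projector $f_2$ on the colour-$2$ edge of $g_{1,2}$ is $1 - (\text{Temperley–Lieb cap-cup})$, so inserting $f_2$ produces the numerator closure minus a multiple of the denominator closure. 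Then by Theorem \ref{thm:doublingpairing} we divide by $\langle g_{1,\vareps},g_{1,\vareps}\rangle_D$ to get $c_{1,0}$ and $c_{1,2}$.

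Next I would apply Proposition \ref{prop:basespairings} with $i=1$: since the expansion of $\hat\T$ involves only $i=1$ terms, $\langle\hat\T,x_j\rangle$ and $\langle\hat\T,y_j\rangle$ vanish for $j>1$, so $I_{\hat\T}$ is generated by the four reduced pairings $\langle\hat\T,x_0\rangle/\delta$, $\langle\hat\T,x_1\rangle/\delta$, $\langle\hat\T,y_0\rangle/\delta$, $\langle\hat\T,y_1\rangle/\delta$. Plugging in the explicit values of $\theta(1,\vareps,1)$, the twist coefficients $\lambda^{1\ 1}_\vareps$, and $\phi_0 = -A^2 - A^{-2} = \delta$, $\phi_1 = -A^4 - A^{-4}$, and combining with the coefficients $c_{1,0}, c_{1,2}$ found above, each of these four generators should simplify — up to multiplication by units $\pm A^k$ — to an $\laurent$-combination of $\langle d(\T)\rangle^\prime$ and $(1-A^{-4})\langle n(\T)\rangle^\prime$, with $\langle d(\T)\rangle^\prime$ and $(1-A^{-4})\langle n(\T)\rangle^\prime$ themselves occurring among them. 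The factor $(1-A^{-4})$ is exactly the kind of thing that drops out of $\phi_0 - \phi_1 = A^4 - A^{-4} = A^4(1 - A^{-8})$ or from $\theta(1,2,1)/\theta(1,0,1)$-type ratios appearing when the colour-$2$ edge is present, so I expect it to appear naturally.

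\textbf{Main obstacle.} The delicate part is the bookkeeping in the second paragraph: correctly identifying, with all normalization constants, how the doubling pairing $\langle\hat\T, g_{1,2}\rangle_D$ breaks into a piece giving $\langle n(\T)\rangle$ and a correction piece giving $\langle d(\T)\rangle$, and making sure the radial twist in the definition of $\langle\ ,\ \rangle_D$ and the blackboard-framing conventions don't introduce extra powers of $A$ that would spoil the clean form $\langle d(\T)\rangle^\prime$, $(1-A^{-4})\langle n(\T)\rangle^\prime$. I would handle this by fixing an explicit diagram for $\hat\T$ sitting over $\T$, carrying out the sphere-cutting of Theorem \ref{thm:gh} on that diagram, and tracking the $f_2$-expansion term by term, postponing any genuinely routine polynomial simplification. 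Everything after that — the passage from the $c_{1,\vareps}$ to the four generators and the final identification of the ideal — is a mechanical application of Proposition \ref{prop:basespairings} and Lemma \ref{lemma:removingq}.
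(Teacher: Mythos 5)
Your route is genuinely different from the paper's: the paper never touches the graph basis or the almost-orthogonal basis in this proof. Instead, it takes an arbitrary closure $L$ of $\hat\T$ with $k$ strands through the hole, regards $L$ as a pairing of the $(B^3,2k+2)$-tangle $\T_k$ (namely $\hat\T$ together with those $k$ strands) against a $(2k+2)$-Catalan tangle, and induces on $k$: a Reidemeister II move reduces $k$ by two when the Catalan tangle joins adjacent strands, and the encircling identity $\phi_1=-A^4-A^{-4}$ reduces $k$ by one otherwise. The base case $k=1$ is where $1-A^{-4}$ is born, as $1+A^{-1}(-A^{-3})$ from resolving the single crossing between the once-through-the-hole closing arc and the tangle and removing the resulting kink; the reverse inclusion is then immediate because $d(\T)$ and that $k=1$ closure are themselves closures of $\hat\T$. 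This is elementary and independent of Sections 2--4, whereas your argument leans on the full machinery of Section \ref{sec:fingen}.

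Your approach is plausible, but as written it has two genuine gaps at load-bearing steps. First, the claim that $\hat\T$ lies in the span of $\{g_{1,0},g_{1,2}\}$ is asserted rather than proved: ``the core of the solid torus is hit at most once'' is not the relevant criterion (the core is disjoint from the tangle; what matters is the meridional disk), and your reason for excluding $g_{0,1}$ --- that $\hat\T$ ``has exactly the structure coming from a partial closure'' --- is not an argument. To make it precise, cut along the meridional disk of the $D^2\times I$ piece, expand $\T$ in the two Catalan tangles of $K(B^3,4)$ relative to the two cut points and the two boundary points, and check that regluing sends one Catalan tangle to a once-winding arc and the other to a trivial arc disjoint from a core circle, neither of which has a $g_{0,1}$-component. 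Second --- and this is the entire content of the lemma --- the identification of $\langle\hat\T,y_0\rangle/\delta$ as a unit times $(1-A^{-4})\langle n(\T)\rangle^\prime$ plus a multiple of $\langle d(\T)\rangle^\prime$ is explicitly left as a ``natural guess.'' Until that computation is carried out, nothing is established, and your proposed source of the factor is off: $\phi_0-\phi_1=A^4+A^{-4}-A^2-A^{-2}$, not $A^4-A^{-4}$; the factor actually comes from the kink produced when the closing arc crosses the tangle once. Note also that once the support claim is in place, Proposition \ref{prop:basespairings} gives $\langle\hat\T,x_1\rangle=(\phi_1-\phi_0)\langle\hat\T,x_0\rangle$ and similarly for $y_1$, so everything reduces to the two pairings with $x_0$ and $y_0$ --- which are exactly the $k=0$ and $k=1$ closures the paper computes directly, suggesting your argument, completed, collapses onto the paper's.
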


\begin{proof}
Recall, we can think of the operation of taking closures of $(B^3,2n)$-tangles as a symmetric bilinear pairing on $K(B^3,2n)$ as follows: \[\left\langle\text{ } \begin{minipage}{.7in}\includegraphics[width=.6in]{tangleS-eps-converted-to.pdf}\end{minipage}, \begin{minipage}{.7in}\includegraphics[width=.6in]{tangleR-eps-converted-to.pdf}\end{minipage}\right\rangle = \left\langle \begin{minipage}{1.5in}\includegraphics[width=1.45in]{3ballpairing-eps-converted-to.pdf}\end{minipage}\right\rangle.\]

For a given closure $L$ of $\hat \T$, $L$ has $k$ strands passing through the hole of the solid torus for some non-negative integer $k$.  We can think of $\langle L\rangle$ as the pairing of the $(B^3, 2k+2)$-tangle in Fig. \ref{fig:tk}, denoted by $\T_k$, with some complementary $(B^3, 2k+2)$-tangle.  Since the Catalan tangles form a basis for $K(B^3, 2k+2)$, we can write $\langle L\rangle$ as a linear combination of $\langle \T_k, \C\rangle$ where $\C$ is a $(2k+2)$-Catalan tangle.  See Fig. \ref{fig:pcexample} for an example where $k = 2$.

\begin{figure}
\includegraphics[width=.925in]{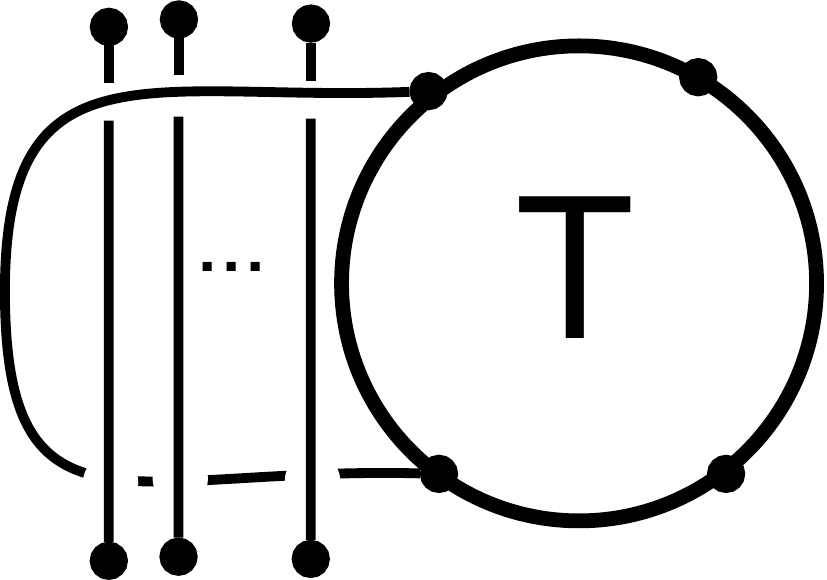}
 \caption{Given a $(B^3, 4)$-tangle $\T$, the tangle consisting of $\hat\T$ with an additional $k$ strands placed as above is denoted by $\T_k$.}\label{fig:tk}
\end{figure}

For any non-negative integer $k$ and Catalan tangle $\C$, we show that $\langle \T_k,\C\rangle = f \langle d(\T)\rangle + g(1-A^{-4})\langle n(\T) \rangle$ for some $f$ and $g$ in $\laurent$.  Hence, $\langle L \rangle$ is also a linear combination of $ \langle d(\T)\rangle$ and $ (1-A^{-4})\langle n(\T) \rangle$, and we have that the reduced Kauffman bracket polynomial $\langle L \rangle ^\prime$ is generated by $\langle d(\T)\rangle^\prime$ and $(1-A^{-4})\langle n(\T) \rangle^\prime$.

\begin{figure}
$\begin{minipage}{1in}\includegraphics[width=1in]{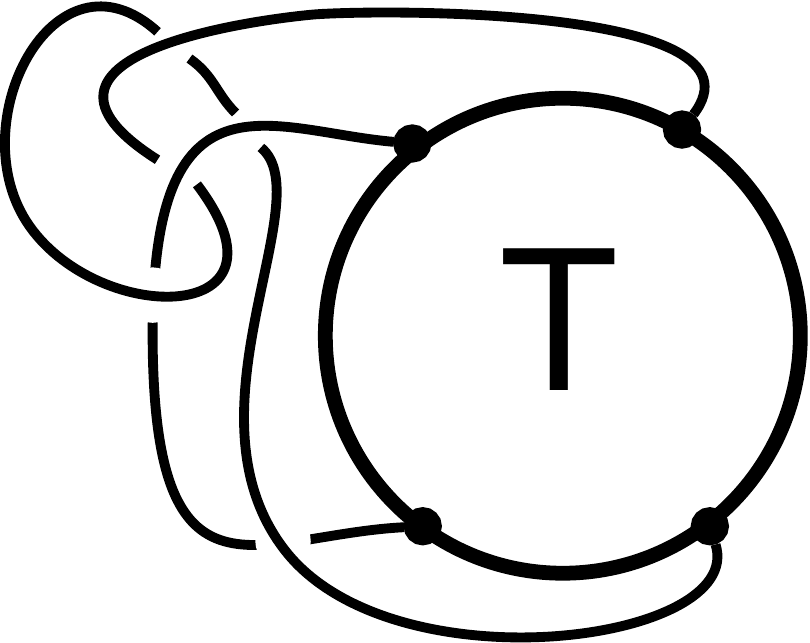}\end{minipage} =  \left\langle\text{ } \begin{minipage}{.9in}\includegraphics[width=.85in]{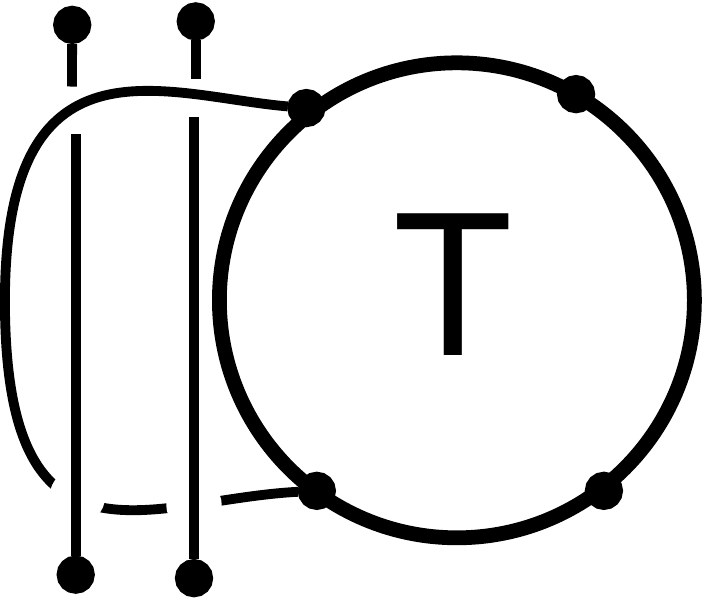}\end{minipage}, \begin{minipage}{.7in}\includegraphics[width=.6in]{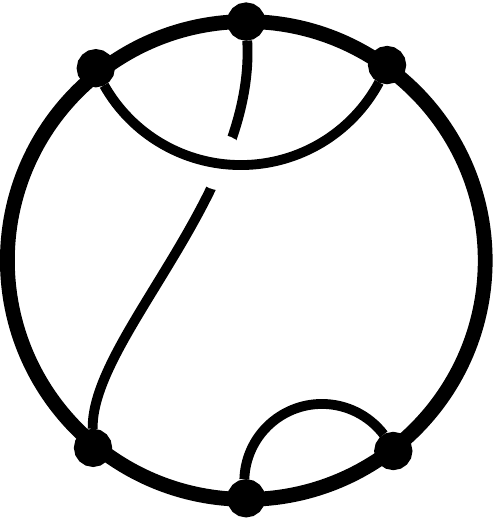}\end{minipage}\right\rangle$\\

\smallskip

$  =  A \left\langle\text{ } \begin{minipage}{.9in}\includegraphics[width=.85in]{pcex3balltangle-eps-converted-to.pdf}\end{minipage}, \begin{minipage}{.7in}\includegraphics[width=.6in]{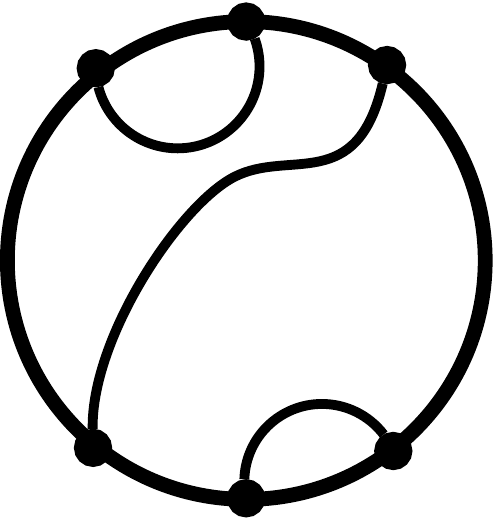}\end{minipage}\right\rangle +  A^{-1} \left\langle\text{ } \begin{minipage}{.9in}\includegraphics[width=.85in]{pcex3balltangle-eps-converted-to.pdf}\end{minipage}, \begin{minipage}{.7in}\includegraphics[width=.6in]{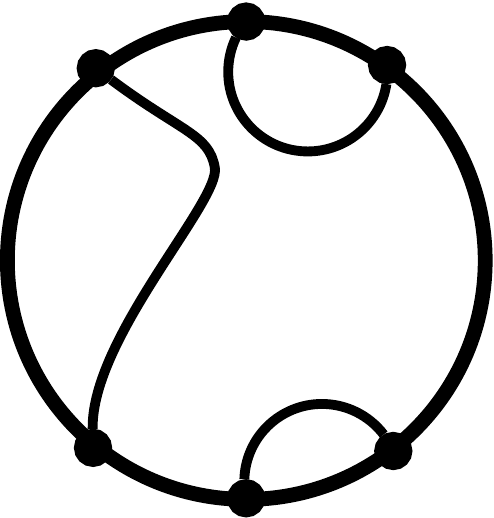}\end{minipage}\right\rangle$
\caption{The partial closure $L$ of $\hat \T$ represented as a linear combination of pairings of $(B^3,6)$-tangles.}\label{fig:pcexample}
\end{figure}

We proceed by induction on $k$.  If $k=0$, then $L = \langle \T_0 , \mathcal{U}\rangle = \langle \hat\T , \mathcal{U}\rangle$ for some $(B^3,2)$-tangle $\mathcal{U}$ and it is easy to see that $\langle L\rangle$ is a multiple of $\langle d(\T) \rangle$.

If $k = 1$, then there are two possibilities for $\langle \T_1, \C\rangle$:\\
 $\begin{array}{r c l} \left\langle \T_1,\begin{minipage}{.575in}\includegraphics[width=.55in]{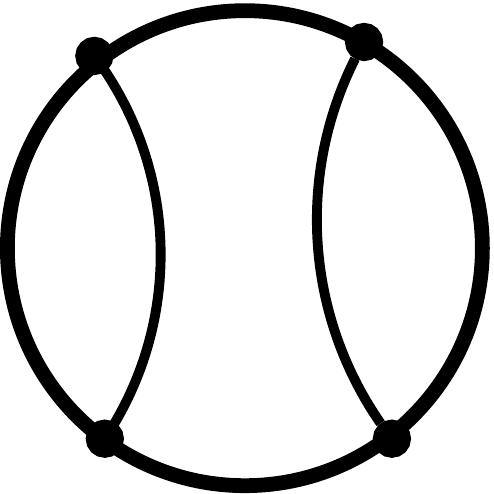}\end{minipage}\right\rangle &  = & \left\langle\text{ } \begin{minipage}{.725in}\includegraphics[width=.7in]{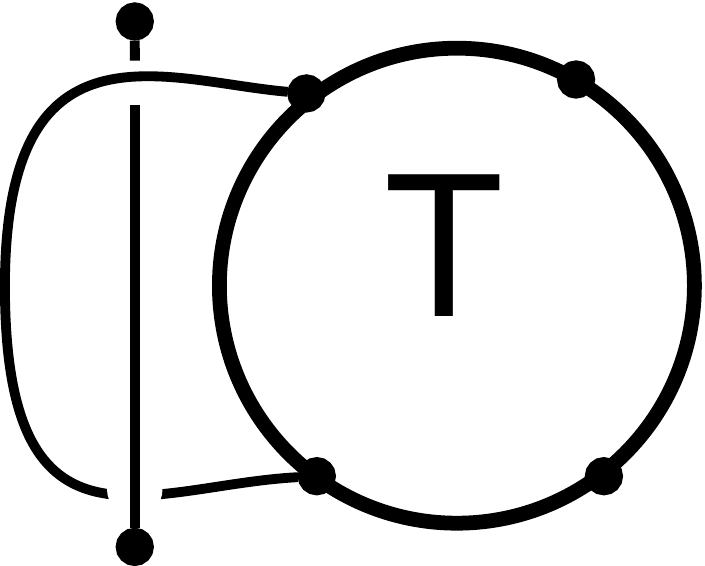}\end{minipage}, \begin{minipage}{.575in}\includegraphics[width=.55in]{lemma5102-eps-converted-to.pdf}\end{minipage}\right\rangle = \left\langle \begin{minipage}{.9in}\includegraphics[width=.85in]{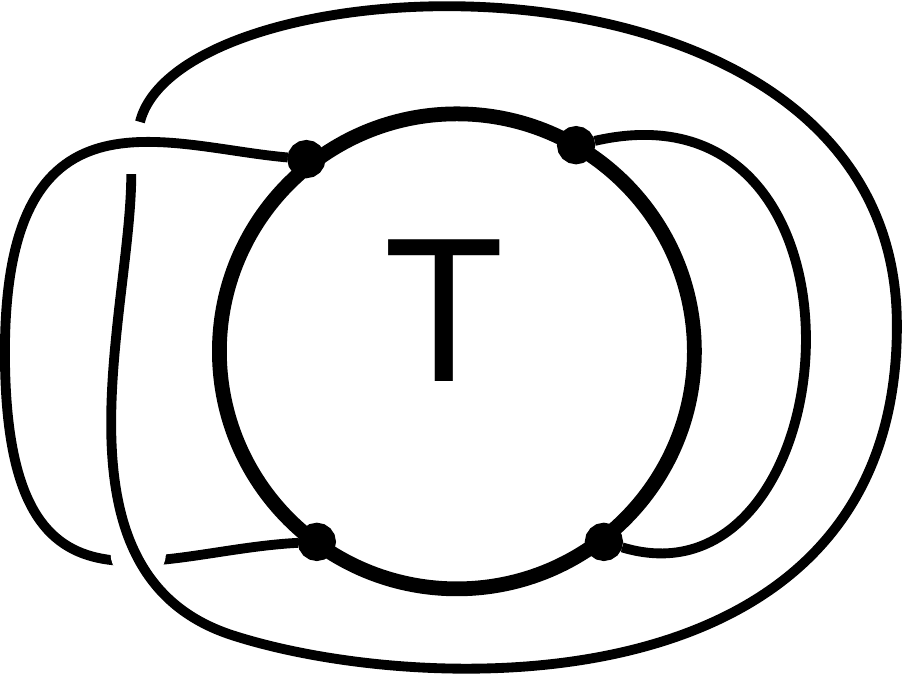}\end{minipage} \right\rangle\\
 & & \\
& = & (-A^4-A^{-4})\langle d(\T)\rangle,\text{ and}\\
\end{array}$

\begin{gather}
\begin{aligned}\label{eq:num}
 \left\langle \T_1, \begin{minipage}{.55in}\includegraphics[width=.525in]{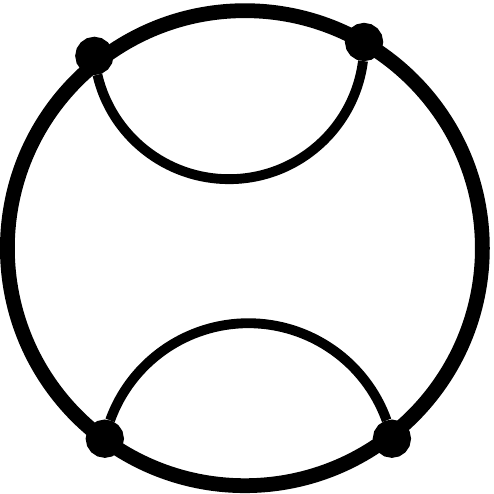}\end{minipage}\right\rangle &=\left\langle\text{ } \begin{minipage}{.7in}\includegraphics[width=.675in]{lemma5101-eps-converted-to.pdf}\end{minipage}, \begin{minipage}{.55in}\includegraphics[width=.525in]{lemma5108-eps-converted-to.pdf}\end{minipage}\right\rangle = \left\langle \begin{minipage}{.8in}\includegraphics[width=.75in]{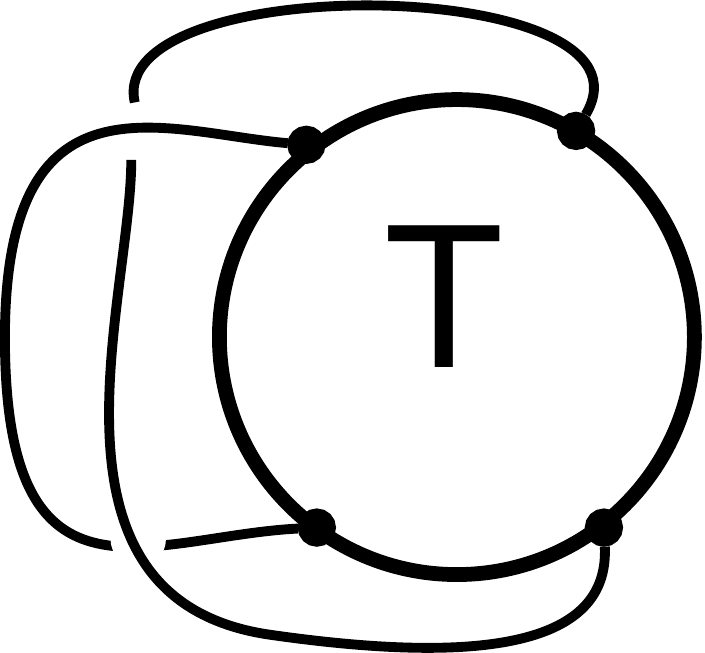}\end{minipage} \right\rangle
 \\ & = A \left\langle \begin{minipage}{.725in}\includegraphics[width=.7in]{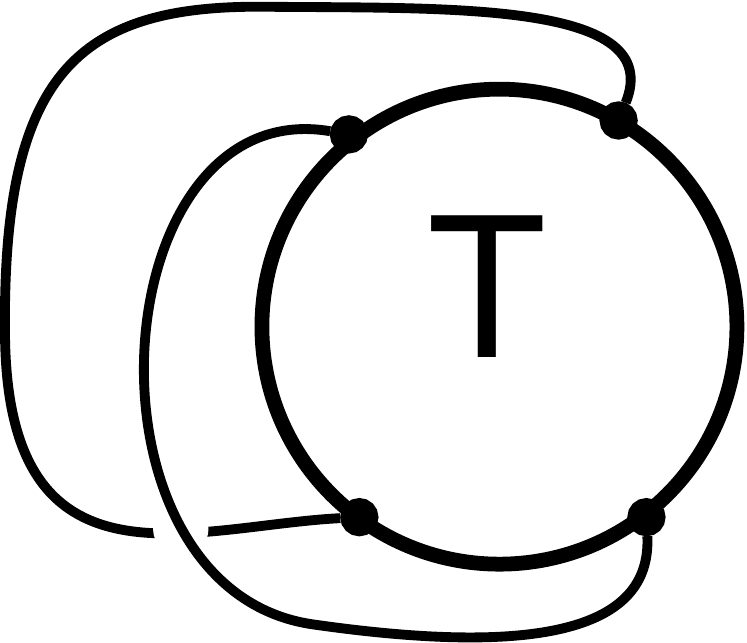}\end{minipage}\right\rangle + A^{-1} \left\langle \begin{minipage}{.725in}\includegraphics[width=.7in]{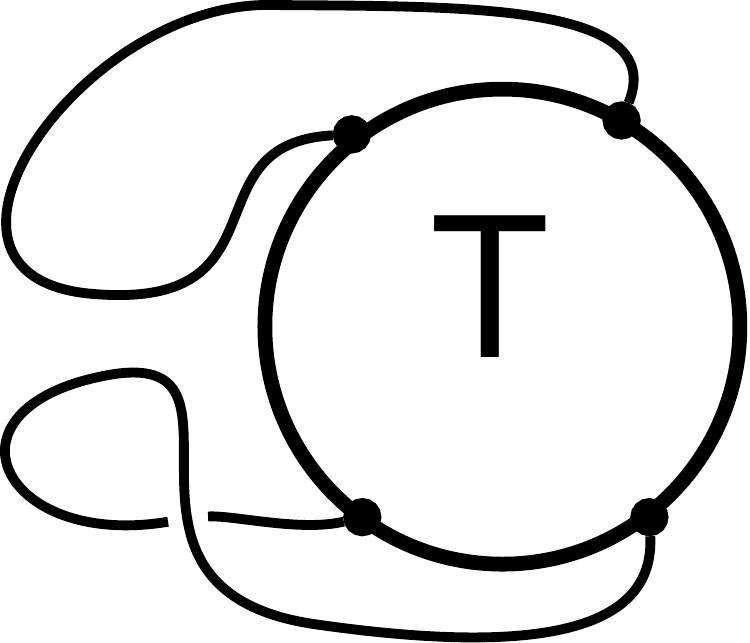}\end{minipage}\right\rangle
\\ &= A^2 \left\langle \begin{minipage}{.725in}\includegraphics[width=.7in]{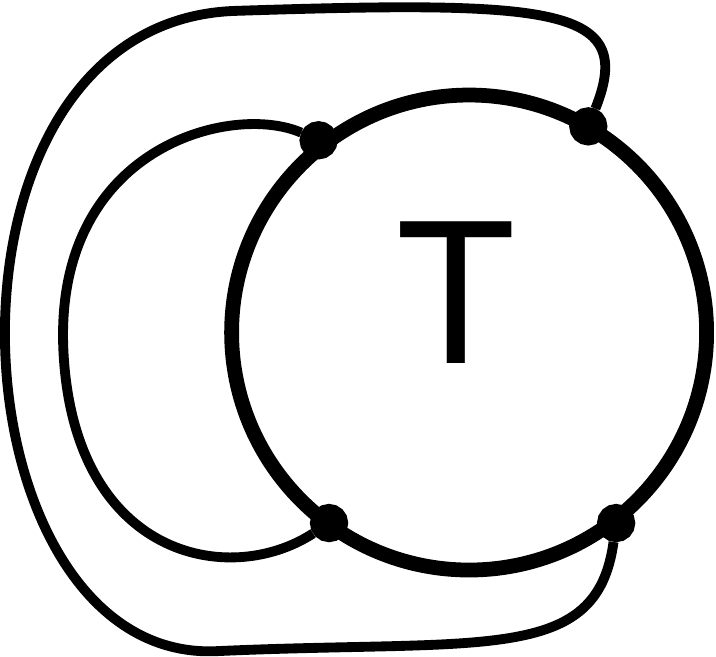}\end{minipage}\right\rangle + \left\langle \begin{minipage}{.55in}\includegraphics[width=.525in]{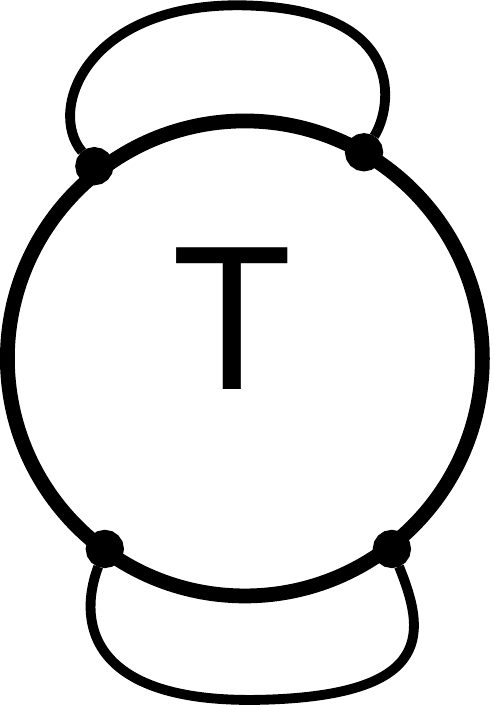}\end{minipage}\right\rangle - A^{-4} \left\langle \begin{minipage}{.55in}\includegraphics[width=.525in]{lemma51num-eps-converted-to.pdf}\end{minipage}\right\rangle
\\ &= (1-A^{-4})\langle n(\T) \rangle + A^2 \langle d(\T)\rangle.
\end{aligned}
\end{gather}

Suppose the property holds for $k > 1$, and consider $\langle \T_{k+1}, \C\rangle$.  We have two cases to consider.  The first case is that the Catalan tangle $\C$ connects two of the $k+1$ strands in $\T_{k+1}$ which are adjacent.  Then, $$\langle \T_{k+1}, \C\rangle = \left\langle\begin{minipage}{.85in}\includegraphics[width=.825in]{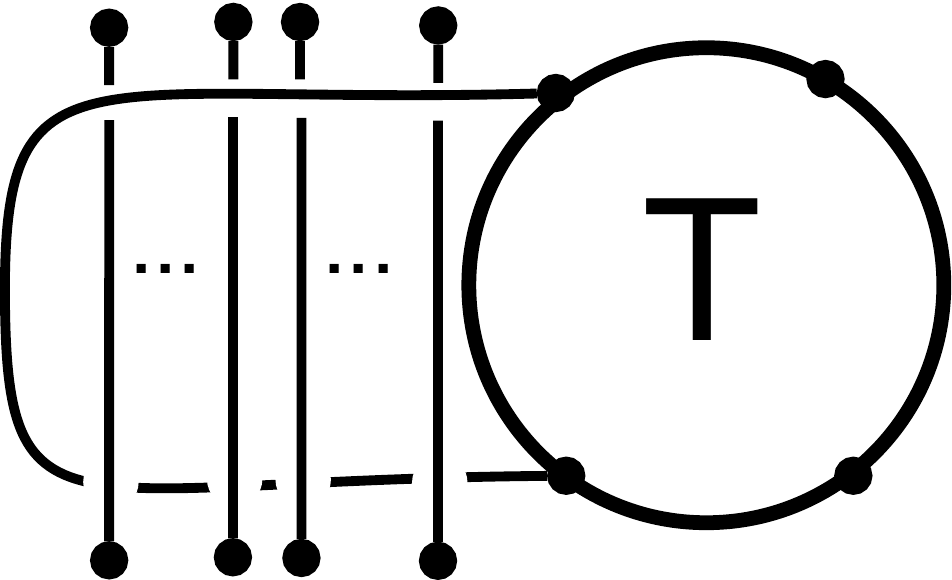}\end{minipage},\C\right\rangle = \left\langle \begin{minipage}{.85in}\includegraphics[width=.825in]{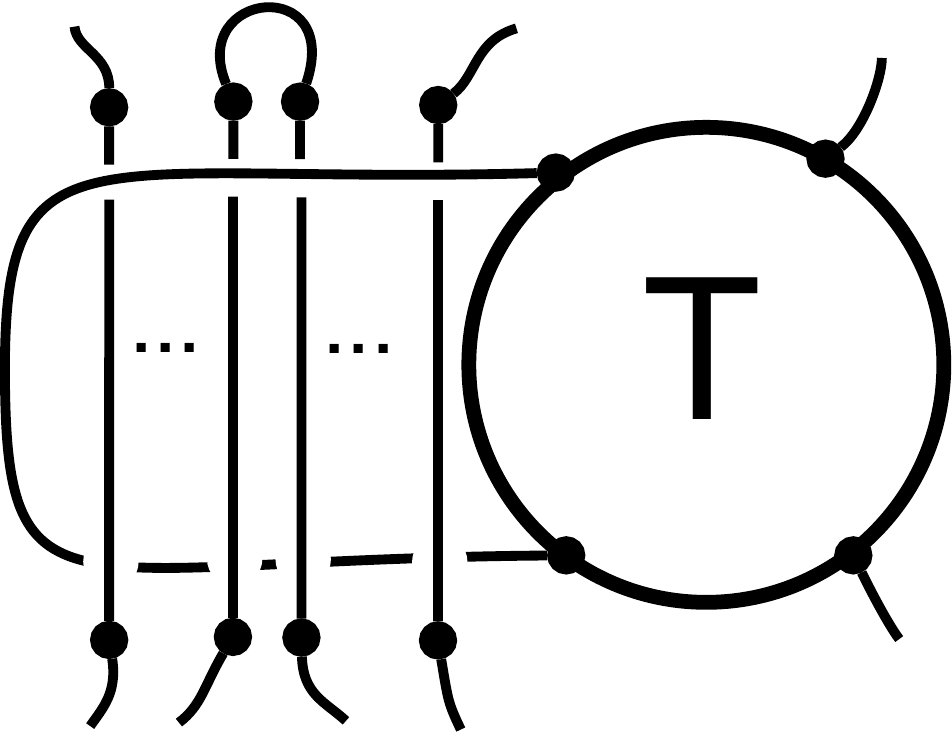}\end{minipage}\right\rangle$$ and we may perform a Reidemeister II move as follows:
$$\left\langle \begin{minipage}{.85in}\includegraphics[width=.825in]{pcproofadjacentstrands2-eps-converted-to.pdf}\end{minipage}\right\rangle = \left\langle \begin{minipage}{.85in}\includegraphics[width=.825in]{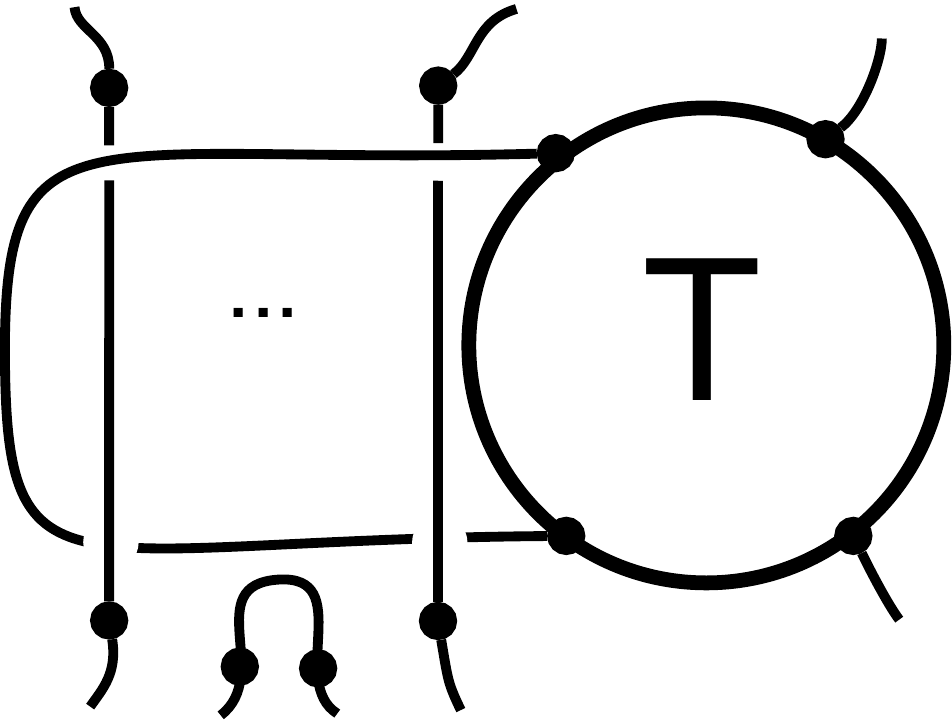}\end{minipage} \right\rangle = \left\langle\begin{minipage}{.85in}\includegraphics[width=.825in]{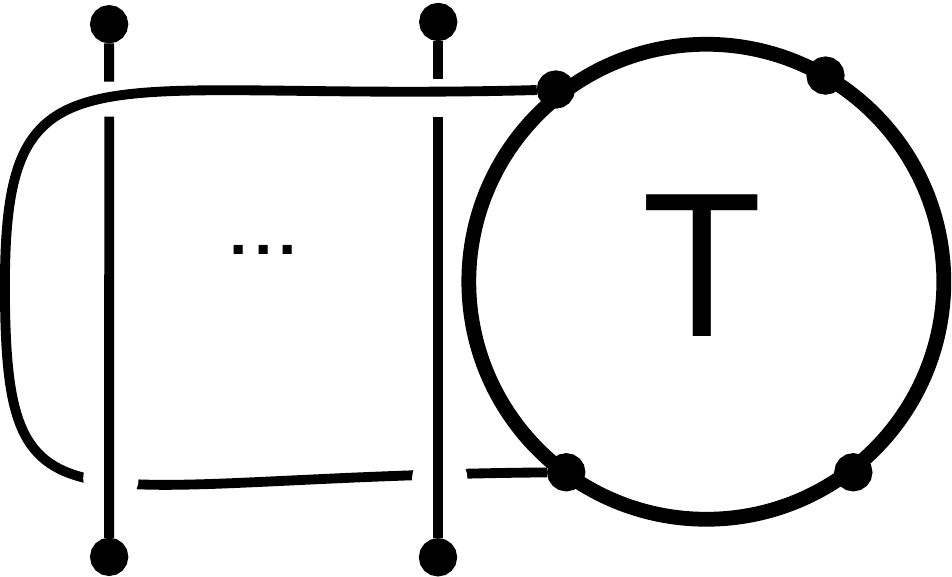}\end{minipage},\C^\prime\right\rangle = \langle \T_{k-1}, \C^\prime\rangle$$ where $\C^\prime$ is some $(2k-2)$-Catalan tangle.  So, $\langle \T_{k+1}, \C\rangle = \langle \T_{k-1}, \C^\prime\rangle = f \langle d(\T)\rangle + g(1-A^{-4})\langle n(\T) \rangle$ for some $f$ and $g$ in $\laurent$.

If no adjacent strands in $\T_{k+1}$ via pairing with the Catalan tangle, then $$\begin{array}{c}
\langle \T_{k+1}, \C\rangle = \left\langle\begin{minipage}{.95in}\includegraphics[width=.925in]{pcproof3strands-eps-converted-to.pdf}\end{minipage},\C\right\rangle = \left\langle \begin{minipage}{1.2in}\includegraphics[width=1.15in]{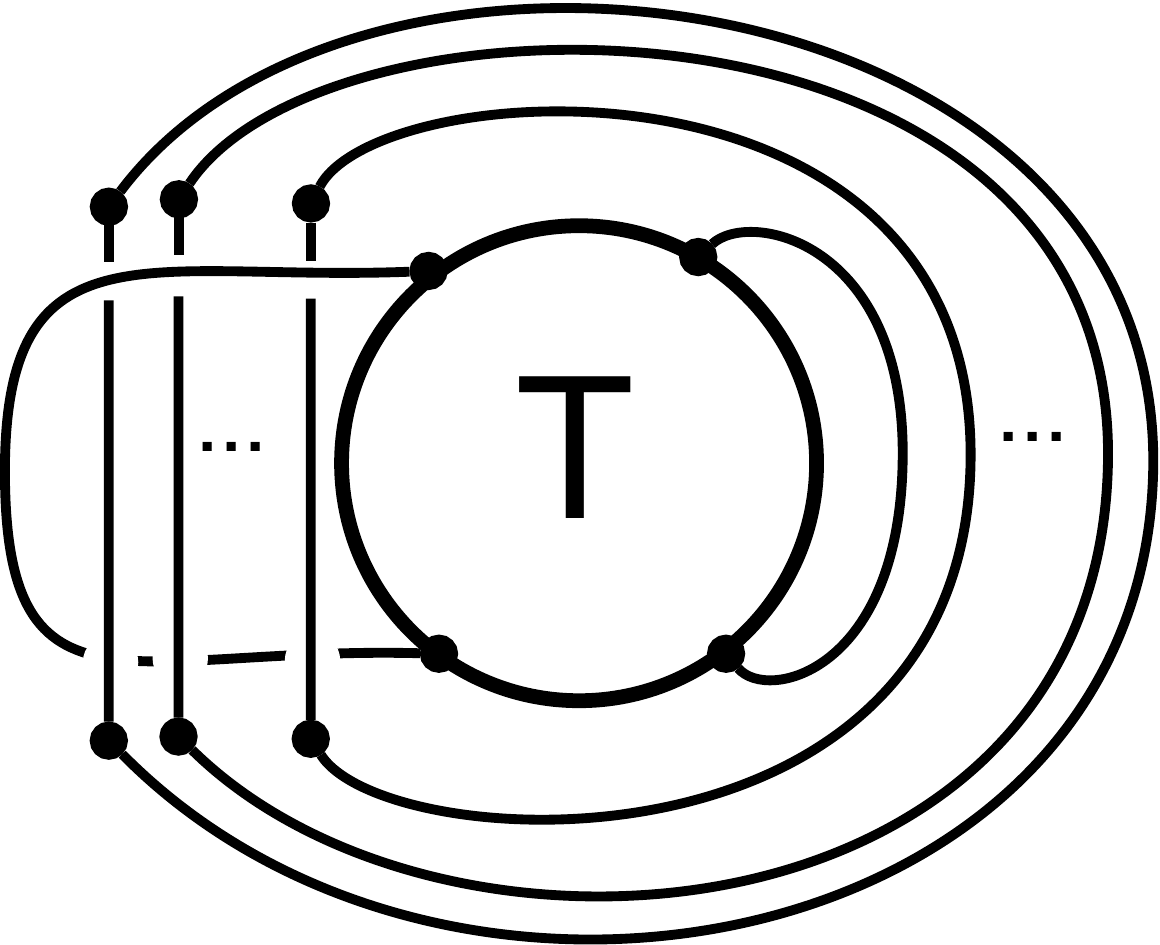}\end{minipage}\right\rangle \\
 = (-A^4-A^{-4})\left\langle \begin{minipage}{1.2in}\includegraphics[width=1.15in]{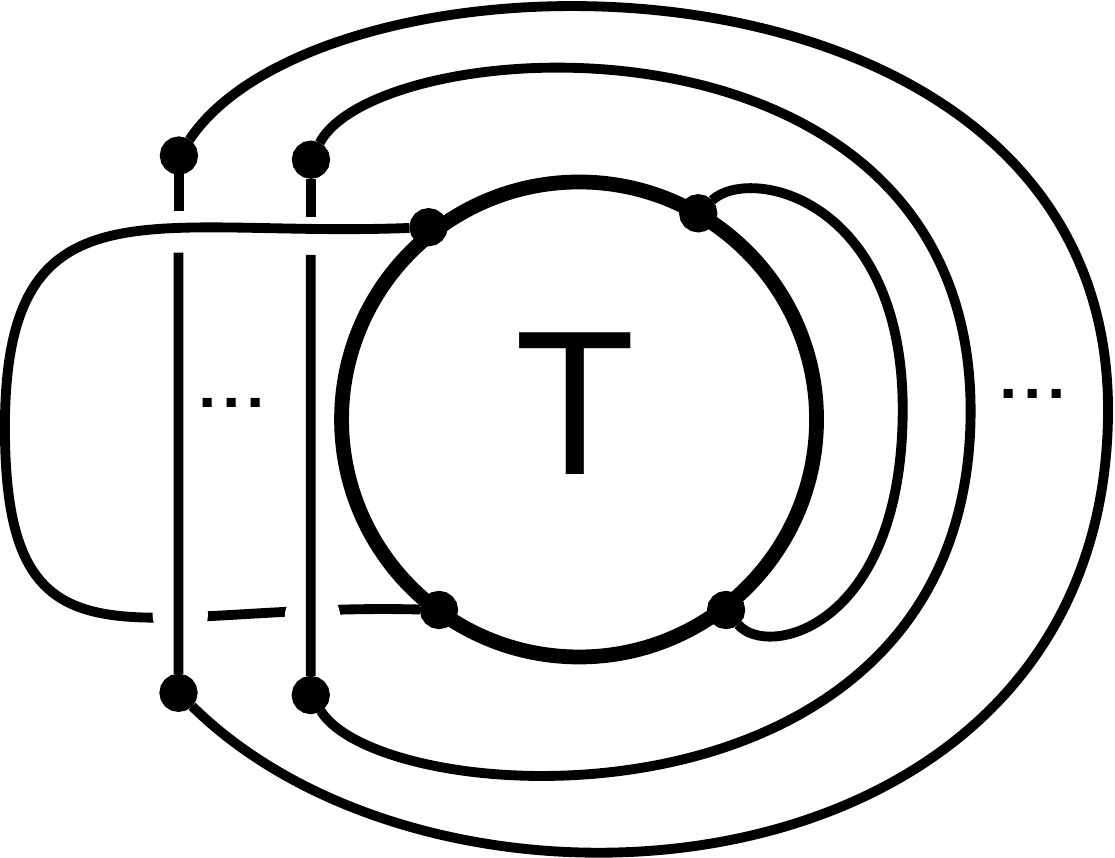}\end{minipage} \right\rangle = (-A^4-A^{-4}) \langle \T_k, \C^\prime\rangle
\end{array}$$
where $\C^\prime$ is some $(2k+2)$-Catalan tangle.

So, given any non-negative integer $k$ and any $(2k+2)$-Catalan tangle, we have that $\langle \T_k,\C\rangle = f \langle d(\T)\rangle + g(1-A^{-4})\langle n(\T) \rangle$ for some $f$ and $g$ in $\laurent$.  Thus, for any closure $L$ of $\hat\T$, we see that $\langle L \rangle$ is a linear combination of $\langle d(\T)\rangle$ and $ (1-A^{-4})\langle n(\T) \rangle$.  This implies that $\langle L\rangle^\prime$ is a linear combination of $\langle d(\T)\rangle^\prime$ and $ (1-A^{-4})\langle n(\T) \rangle^\prime$, and so $I_{\hat \T} \subseteq  \langle\langle d(\T)\rangle^\prime,(1-A^{-4})\langle n(\T) \rangle^\prime\rangle$.

Since the denominator $d(\T)$ is clearly a closure of $\hat \T$, we have that $\langle d(\T)\rangle^\prime \in I_{\hat\T}$.  Let $\s$ denote the tangle $\begin{minipage}{.325in}\includegraphics[width=.3in]{lemma5108-eps-converted-to.pdf}\end{minipage}$.  We have from Equation \eqref{eq:num} that $(1-A^{-4})\langle n(\T) \rangle =  \langle \T_1, \s\rangle - A^2\langle d(\T)\rangle$.  Since $\left\langle \T_1, \s\right\rangle$ is the Kauffman bracket polynomial of a closure of $\hat\T$, we see that $\langle \T_1, \s\rangle/\delta \in I_{\hat\T}$.  So, $(1-A^{-4})\langle n(\T) \rangle^\prime$ is the difference of two elements of $I_{\hat\T}$ and is therefore an element of $I_{\hat\T}$ itself.  Hence, the Kauffman bracket ideal $I_{\hat\T} = \langle\langle d(\T)\rangle,(1-A^{-4})\langle  n(\T) \rangle\rangle$.
\end{proof}

We now prove Theorem \ref{thm:partialclosures}.

\begin{proof}[Proof of Theorem \ref{thm:partialclosures}]
Let $\T$ be a  $(B^3, 4)$-tangle with partial closure $\hat\T$ which has a single component. Since any closure of $\hat\T$ is also a closure of $\T$, we have that $I_{\hat\T} \subseteq I_\T =\langle \langle n(\T)\rangle^\prime, \langle d(\T)\rangle^\prime \rangle$ according to Theorem \ref{thm:psw}.  According to Lemma \ref{lemma:partialclosure}, $I_{\hat\T} = \langle \langle d(\T)\rangle^\prime,(1-A^{-4})\langle n(\T) \rangle^\prime \rangle$, so it remains only to show that $\langle n(\T)\rangle^\prime \in I_{\hat\T}$ to prove equality of the two ideals.

Since $\hat \T$ has a single component, the denominator $d(\T)$ is a knot.  Then, its Jones polynomial $J_{d(\T)}(t)$ evaluated at one is one by \cite[Theorem 15]{jo} and so $J_{d(\T)} (t)= (1-t)f(t) + 1$ for some $f(t) \in \mathbb{Z}[t,t^{-1}]$.  Since $J_{d(\T)} (t) = A^{-3\omega} \langle d(\T) \rangle^\prime$ where $A^{-4} = t$ and $\omega$ is the writhe of an oriented diagram of the denominator, we have that \begin{equation}\label{eq:ntdt}\langle d(\T) \rangle^\prime = A^{3\omega}(1-A^{-4})f(A^{-4}) + A^{3\omega}.\end{equation}

Then, $\langle n(\T)\rangle^\prime \langle d(\T)\rangle^\prime \in I_{\hat\T}$ since $\langle d(\T)\rangle^\prime \in I_{\hat\T}$. We also have from  Equation \ref{eq:ntdt} that $\langle n(\T)\rangle^\prime \langle d(\T)\rangle^\prime  =  A^{3\omega}f(A^{-4})(1-A^{-4})\langle n(\T)\rangle^\prime + A^{3\omega}\langle n(\T)\rangle^\prime$.  Clearly, $A^{3\omega}f(A^{-4})(1-A^{-4})\langle n(\T)\rangle^\prime \in I_{\hat \T}$.  So, $A^{3\omega}\langle n(\T)\rangle^\prime$ and thus $\langle n(\T)\rangle^\prime$ are elements of $I_{\hat \T}$ as well. This concludes the proof.
\end{proof}

\section*{Acknowledgements}
The author thanks Patrick Gilmer whose advice, corrections, and suggestions significantly contributed to this paper.

\appendix
\section{}\label{app:linearcombo}
Here we give the computation illustrating how to write $\f$ as a linear combination of graph basis elements $g_{i,\vareps}$.  We first find a general formula for the pairing $\langle \f, g_{i,\vareps}\rangle_D$ for any $(i,\vareps)$.  Using this formula and Mathematica code from \cite{h}, we were able to compute this pairing and find the explicit formulas for non-zero $c_{i,\vareps}$ given in Section \ref{section:1057}.  The Mathematica notebook we used to do this is available on the author's website.  Each sum in the following computation ranges over all admissible colorings of the corresponding graph.  Using Theorems \ref{thm:fusion} and \ref{thm:gh} along with Formulas \ref{eq:bubble} - \ref{eq:removeloop}, we have that:
\begin{align*}
\langle \f, g_{i,\vareps}\rangle_D & =  \begin{minipage}{1in}\includegraphics[width=1.1in]{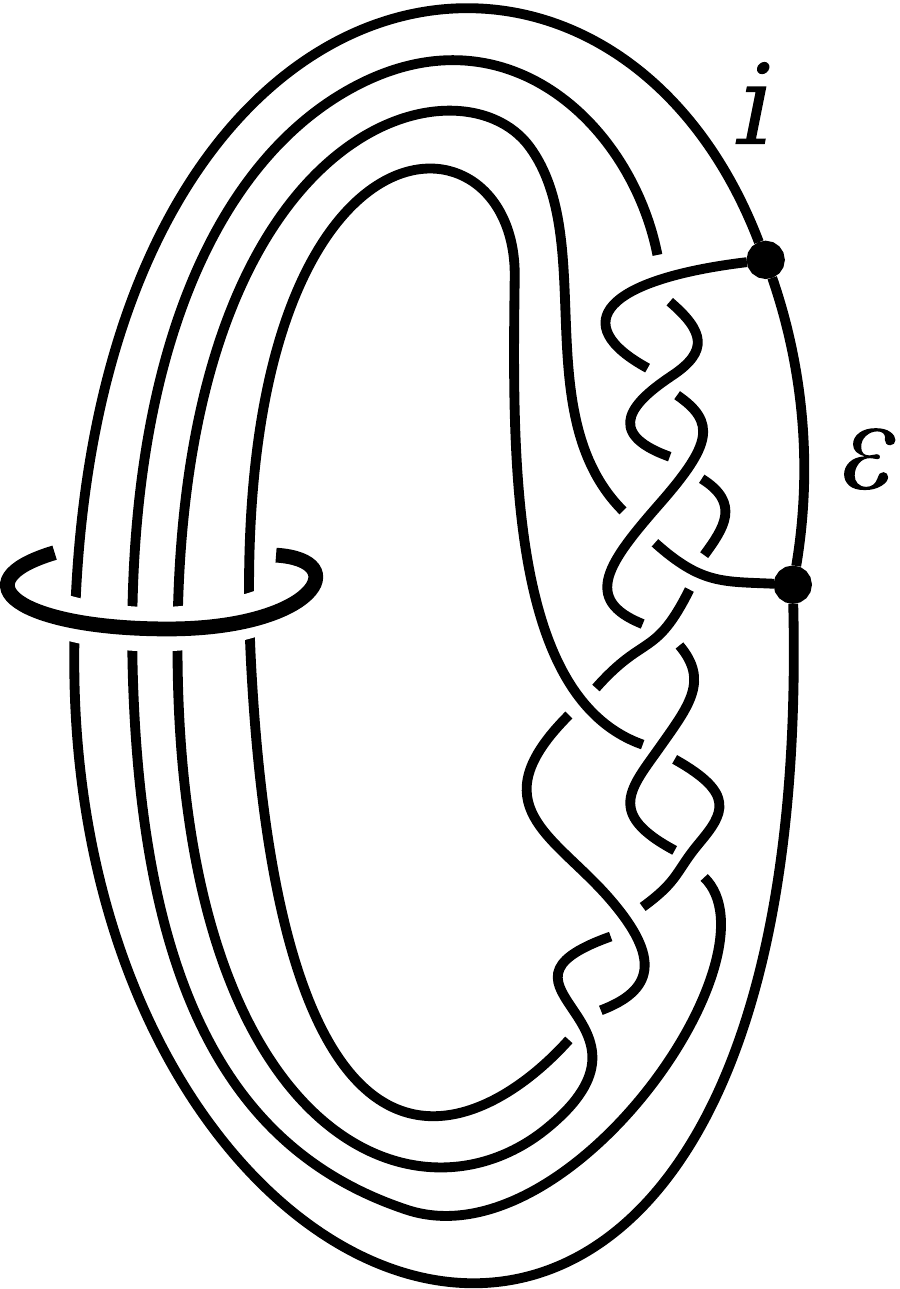}\end{minipage} =  \ds \sum_j c_j  \begin{centering}\begin{minipage}{.7in}\includegraphics[width=.6in]{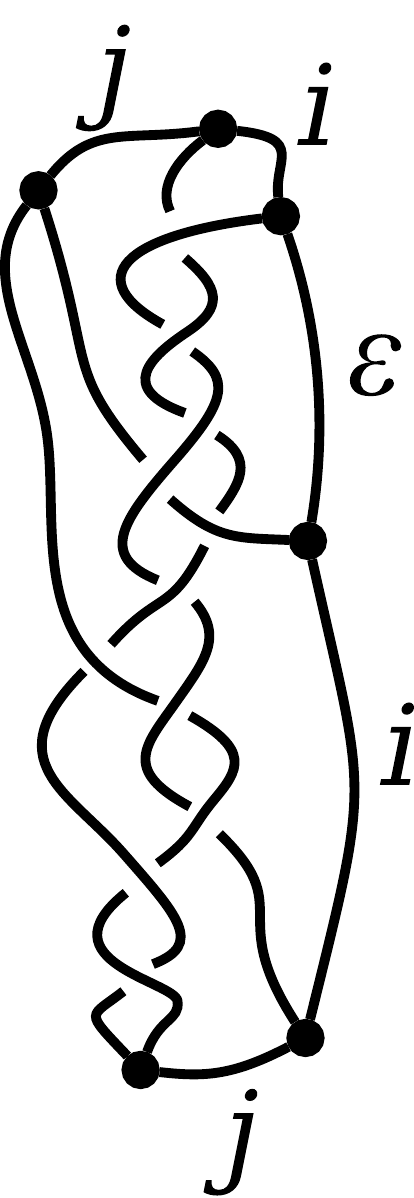}\end{minipage}\end{centering}\\
& \\
& \text{ where } c_j = \ds \frac{1}{\begin{centering}\begin{minipage}{.7in}\includegraphics[width=.65in]{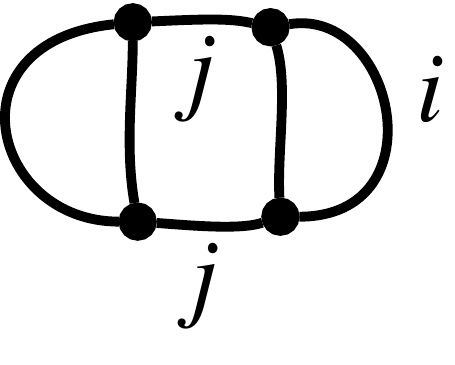}\end{minipage}\end{centering}} = \frac{\Delta_j}{\theta(1,1,j)\theta(1,i,j)}\\
\end{align*}
\begin{align*}
& = \ds \sum_j c_j^\prime  \begin{centering}\begin{minipage}{.8in}\includegraphics[width=.65in]{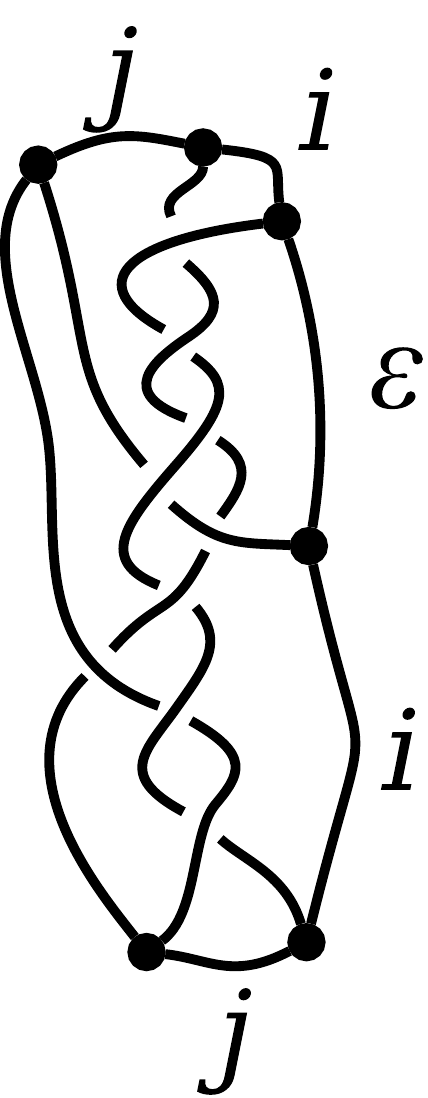}\end{minipage}\end{centering} \text{ where } c_j^\prime = c_j (\lambda_j^{11})^2\\
& = \ds \sum_{j,k} c_{j,k}  \begin{centering}\begin{minipage}{.8in}\includegraphics[width=.65in]{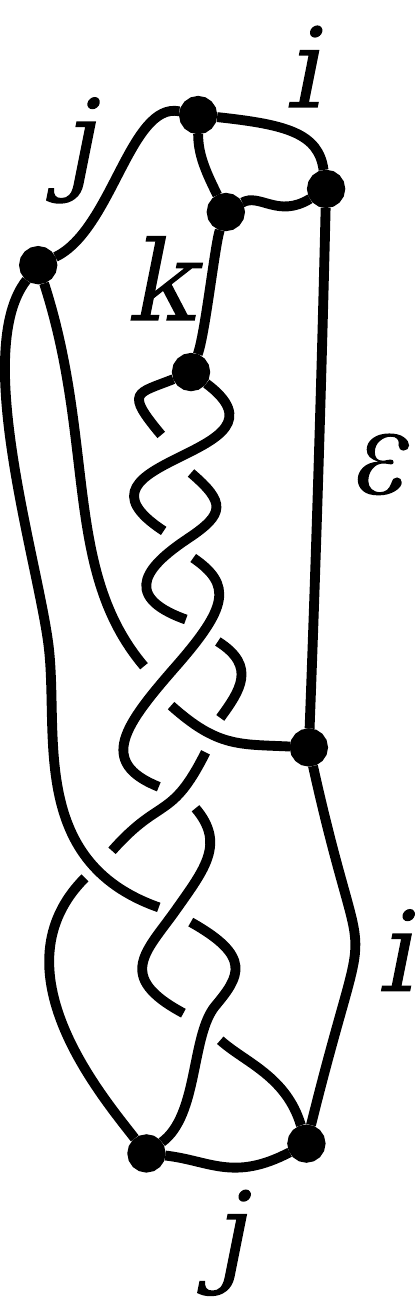}\end{minipage}\end{centering} \text{ where } c_{j,k} = c_j^\prime \frac{\Delta_k}{\theta(1,1,k)}\\
& \\
& = \ds \sum_{j,k} c_{j,k}^\prime  \hspace{.15in}\begin{minipage}{.8in}\includegraphics[width=.65in]{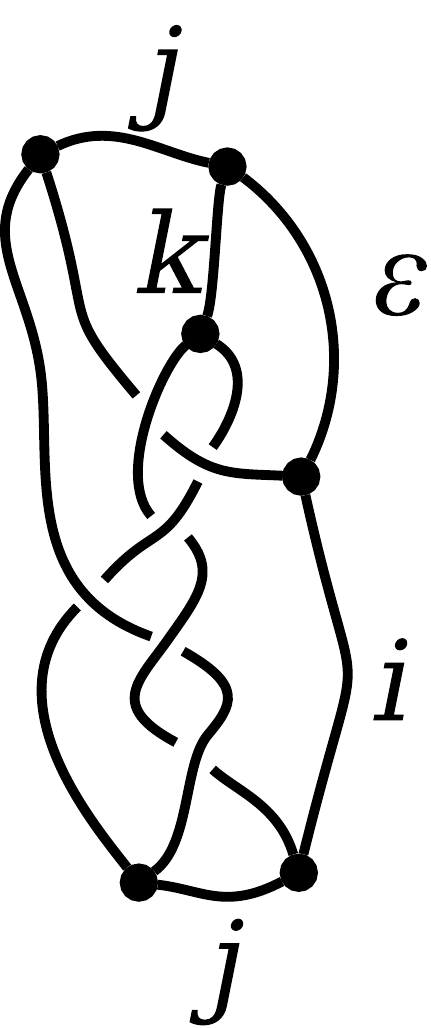}\end{minipage} \text{ where } c_{j,k}^\prime = c_{j,k} \frac{Tet \left[  \begin{array} {c c c} j & i & \vareps \\ 1 & k & 1 \\ \end{array}\right](\lambda_k^{11})^{-3}}{\theta(j,k,\vareps)}\\
& \\
& =  \ds \sum_{j,k,l} c_{j,k,l}  \hspace{.15in}\begin{minipage}{.8in}\includegraphics[width=.65in]{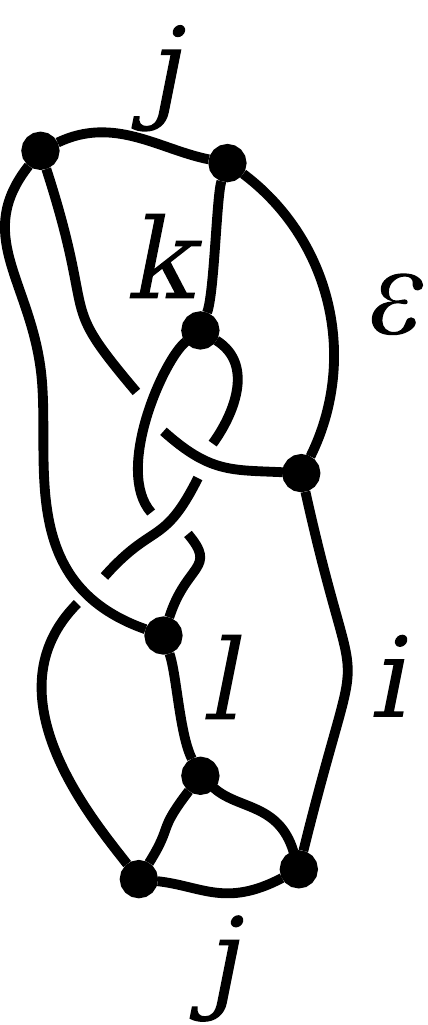}\end{minipage} \text{ where } c_{j,k,l} = c_{j,k}^\prime \frac{\Delta_l (\lambda_l^{11})^{-2}}{\theta(1,1,l)}\\
\end{align*}
\begin{align*}
& = \ds \sum_{j,k,l} c_{j,k,l}^\prime \hspace{.15in}\begin{minipage}{.8in}\includegraphics[width=.65in]{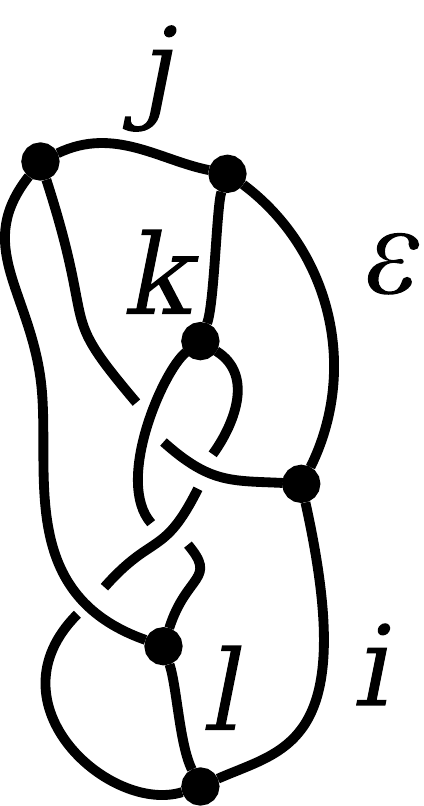} \end{minipage}\text{ where } c_{j,k,l}^\prime = c_{j,k,l} \frac{Tet \left[  \begin{array} {c c c} i & j & 1 \\ 1 & l & 1 \\ \end{array}\right]}{\theta(i,l,1)}\\
& =  \ds \sum_{j,k,l,m} c_{j,k,l,m}  \hspace{.15in}\begin{minipage}{.8in}\includegraphics[width=.7in]{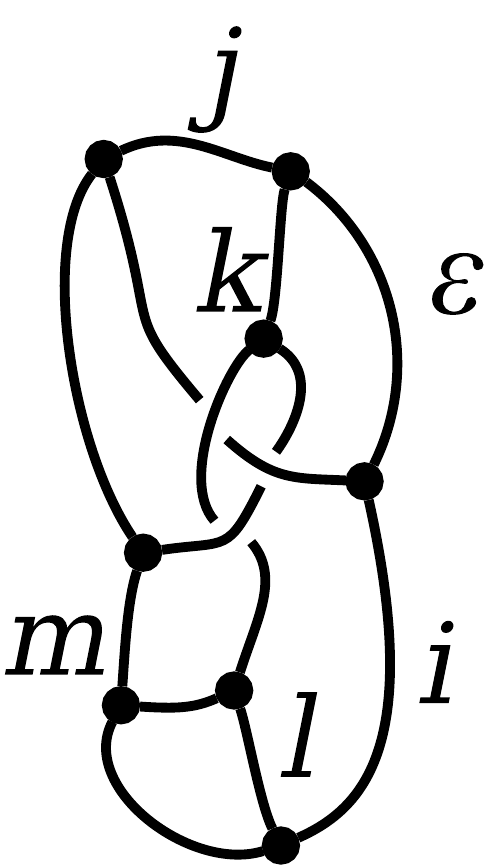}\end{minipage} \text{ where } c_{j,k,l,m} = c_{j,k,l}^\prime \frac{\Delta_m\lambda_m^{11}}{\theta(1,1,m)}\\
& \\
& = \ds \sum_{j,k,l,m} c_{j,k,l,m}^\prime \hspace{.15in} \begin{minipage}{.8in}\includegraphics[width=.7in]{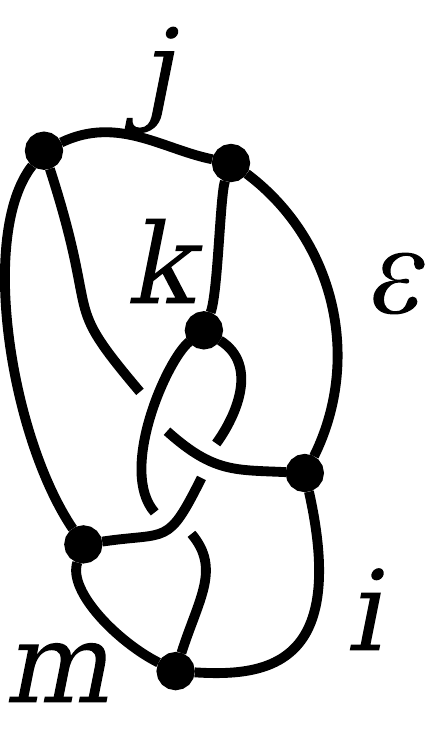}\end{minipage}\text{ where } c_{j,k,l,m}^\prime = c_{j,k,l,m} \frac{Tet \left[  \begin{array} {c c c} 1 & l & i \\ 1 & m & 1 \\ \end{array}\right]}{\theta(1,m,i)}\\
& \\
& =  \ds \sum_{j,k,l,m,n} c_{j,k,l,m,n}  \hspace{.15in}\begin{minipage}{.8in}\includegraphics[width=.7in]{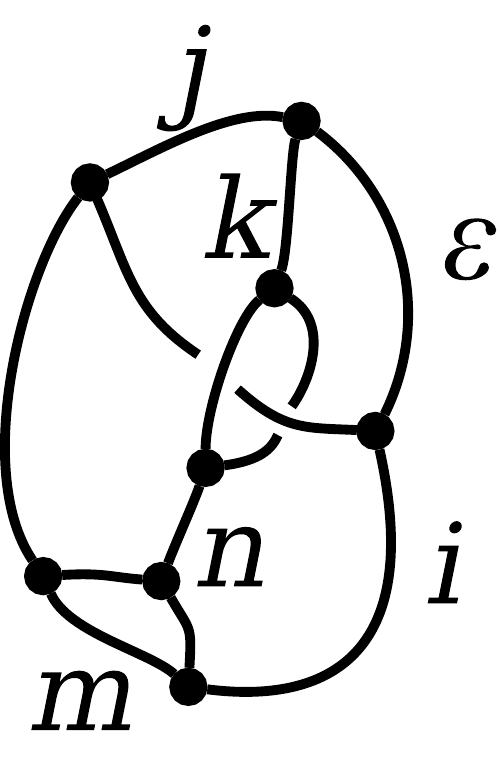}\end{minipage} \text{ where } c_{j,k,l,m,n} = c_{j,k,l,m}^\prime \frac{\Delta_n(\lambda_n^{11})^{-1}}{\theta(1,1,n)}\\
& \\
& = \ds \sum_{j,k,l,m,n} c_{j,k,l,m,n}^\prime \hspace{.15in} \begin{minipage}{.8in}\includegraphics[width=.7in]{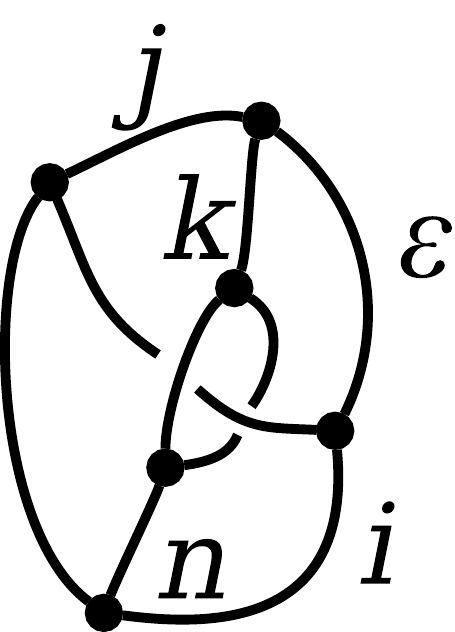}\end{minipage}\text{ where } c_{j,k,l,m,n}^\prime = c_{j,k,l,m,n} \frac{Tet \left[  \begin{array} {c c c} 1 & 1 & n \\ 1 & i & m \\ \end{array}\right]}{\theta(1,i,n)}\\
& \\
& =  \ds \sum_{j,k,l,m,n,p} c_{j,k,l,m,n,p}  \hspace{.15in}\begin{minipage}{.8in}\includegraphics[width=.7in]{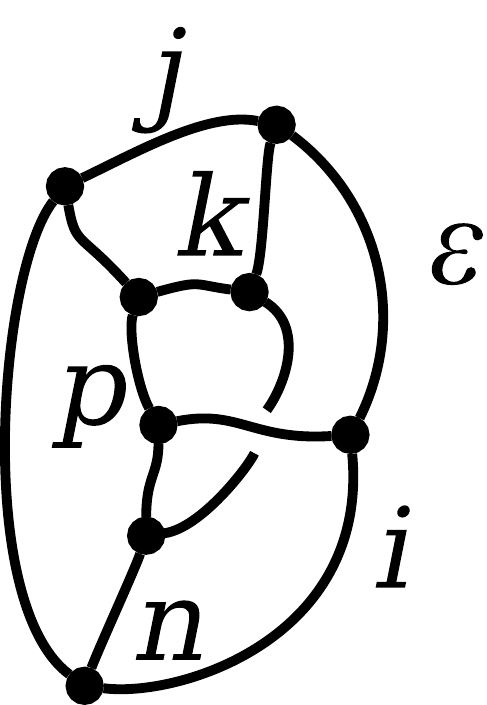}\end{minipage} \text{ where } c_{j,k,l,m,n,p} = c_{j,k,l,m,n}^\prime \frac{\Delta_p(\lambda_p^{11})^{-1}}{\theta(1,1,p)}\\
\end{align*}
\begin{align*}
& =  \ds \sum_{j,\ldots,p,q} c_{j,\ldots,p,q}  \hspace{.15in}\begin{minipage}{.9in}\includegraphics[width=.8in]{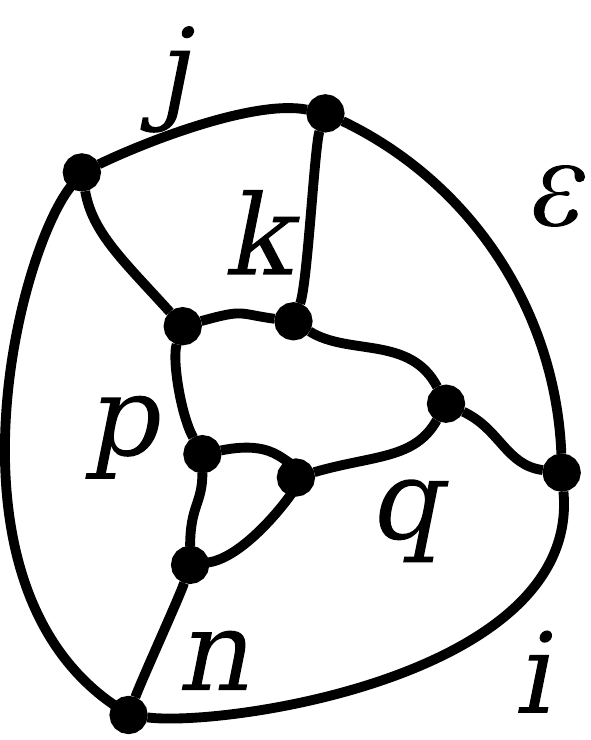}\end{minipage} \text{ where } c_{j,\ldots,p,q} = c_{j,k,l,m,n,p} \frac{\Delta_q(\lambda_q^{11})^{-1}}{\theta(1,1,q)}\\
& \\
& = \ds \sum_{j,\ldots,p,q} c_{j,\ldots,p,q}^\prime \hspace{.15in} \begin{minipage}{.9in}\includegraphics[width=.8in]{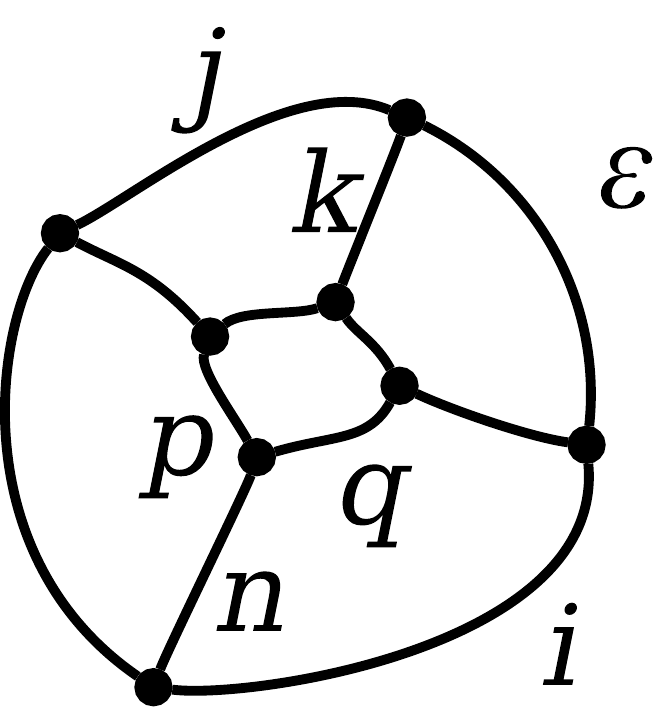}\end{minipage}\text{ where } c_{j,\ldots,p,q}^\prime = c_{j,\ldots,p,q} \frac{Tet \left[  \begin{array} {c c c} n & 1 & p \\ 1 & q & 1 \\ \end{array}\right]}{\theta(n,q,p)}\\
& \\
& = \ds \sum_{j,\ldots,p,q,r} c_{j,\ldots,p,q,r}  \hspace{.15in} \begin{minipage}{.9in}\includegraphics[width=.8in]{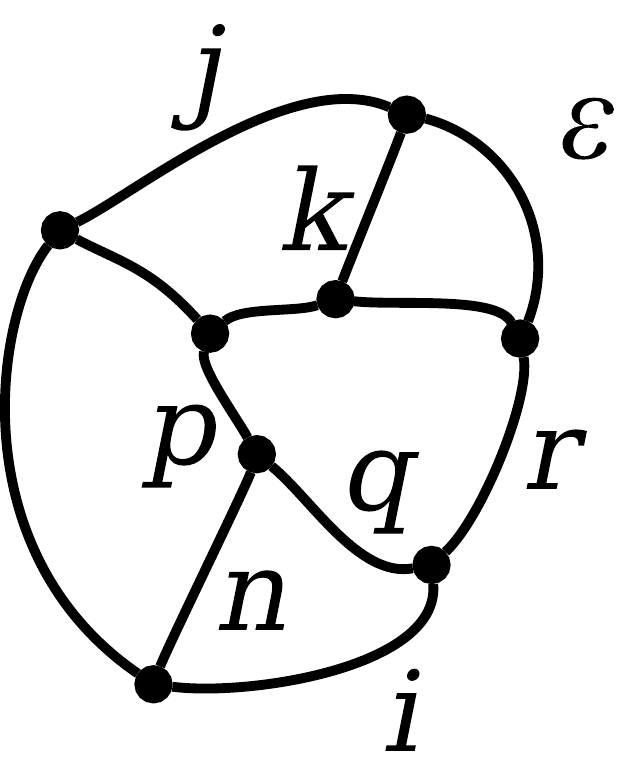}\end{minipage}\text{ where } c_{j,\ldots,p,q,r}  = c_{j,\ldots,p,q}^\prime \left\{\begin{array}{c c c}q & 1 & r\\ \vareps &i & 1 \end{array}\right\}\\
& \\
& = \ds \sum_{j,\ldots,p,q,r} c_{j,\ldots,p,q,r} ^\prime \hspace{.15in} \begin{minipage}{.7in}\includegraphics[width=.65in]{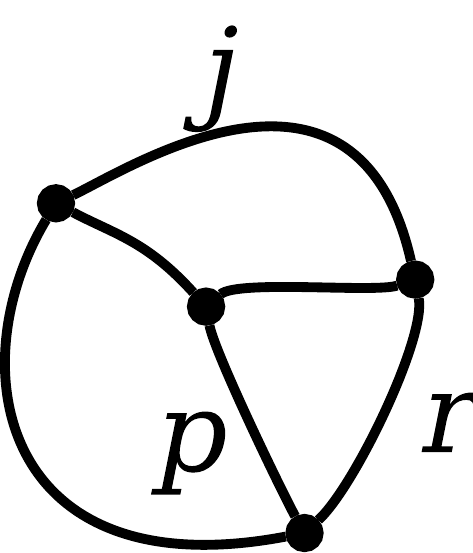}\end{minipage}\text{ where } c_{j,\ldots,p,q,r} ^\prime = c_{j,\ldots,p,q,r}  \frac{Tet \left[  \begin{array} {c c c} 1 & n & p \\ q & r & i \\ \end{array}\right]Tet \left[  \begin{array} {c c c} 1 & k & j \\ \vareps & r & 1 \\ \end{array}\right]}{\theta(1,r,p)\theta(1,r,j)}\\
& \\
& = \ds \sum_{j,\ldots,p,q,r}  Tet \left[  \begin{array} {c c c} 1 & p & 1 \\ 1 & j & r \\ \end{array}\right]c_{j,\ldots,p,q,r} ^\prime \hspace{.05in}.
\end{align*}

\section{}\label{app:generators}
We now give explicit expressions for the eight generators of $I_\f$.  We computed them using the same Mathematica notebook we used in Appendix \ref{app:linearcombo}.  The generators are as follows:
\[\begin{array}{r c l}
\pair{\f,x_0}/\delta & = & (c_{1,2}\theta(1, 2, 1) + c_{3,2} \theta(1,2,3) + c_{3,4}\theta(1,4,3))/\delta\\
& =& A^{-23}-3A^{-19}+7A^{-15}-10A^{-11}+12A^{-7}-14A^{-3}+12A-\\
& & 10A^5+6A^9-3A^{13}+A^{17} \\
& = & A^{-23}g_1,\\
\pair{\f,x_1}/\delta & = & (c_{1,2}\theta(1, 2, 1)(\phi_1 - \delta) + c_{3,2} \theta(1,2,3)(\phi_3 - \delta)\\
 & & + c_{3,4}\theta(1,4,3)(\phi_3 - \delta))/\delta\\
\end{array}\]
\[\begin{array}{r c l}
& =& -A^{-27} + A^{-25} + 3A^{-23}-2A^{-21}-9A^{-19} + 4A^{-17} + 16A^{-15}\\
& &  -3A^{-13} -23A^{-11} + 2A^{-9} + 30A^{-7} -2A^{-5} -31 A^{-3} -2A^{-1}\\
& & +30A+2A^3-24A^5-4A^7+17A^9+3A^{11}-9A^{13}-2A^{15}+\\
& & 4A^{17}+A^{19}-A^{21} \\
& = &  A^{-27}g_2,\\
& & \\
\pair{\f,x_2}/\delta & = &  (c_{3,2} \theta(1,2,3)(\phi_3 - \delta)(\phi_3 -\phi_1) + c_{3,4}\theta(1,4,3)(\phi_3 - \delta)(\phi_3 -\phi_1))/\delta\\
&= & A^{-27}-3A^{-23}-A^{-21}+4A^{-19}+2A^{-17}-6A^{-15}-A^{-13}+\\
& & 8A^{-11}+2A^{-9}-9A^{-7}-A^{-5}+10A^{-3}-A^{-1}-10A+9A^5-\\
& & 2A^7-7A^9+2A^{11}+6A^{13}-A^{15}-4A^{17}+A^{19}+2A^{21}-A^{25}\\
& =& A^{-27}g_3,\\
& & \\
\pair{\f,x_3}/\delta & = & (\phi_3 - \phi_2)\pair{\f,x_2}/\delta = A^{-35}g_4,\\
& & \\
\pair{\f,y_0}/\delta & = &( c_{1,2}(\lambda^{1\text{ } 1}_2)^{-2}\theta(1, 2, 1) + c_{3,2} (\lambda^{1\text{ } 3}_2)^{-1}(\lambda^{3\text{ } 1}_2)^{-1}\theta(1,2,3)\\
   & & + c_{3,4}(\lambda^{1\text{ } 3}_4)^{-1}(\lambda^{3\text{ } 1}_4)^{-1}\theta(1,4,3))/\delta\\
&= & 2A^{-21}-5A^{-17}+10A^{-13}-14A^{-9}+16A^{-5}-17A^{-1}+15A^3\\
& & -12A^7+7A^{11}-4A^{15}+A^{19} \\
& = & A^{-21}g_5,\\
& & \\
\pair{\f,y_1}/\delta & = & (c_{1,2}(\lambda^{1\text{ } 1}_2)^{-2}\theta(1, 2, 1)(\phi_1 - \delta) + c_{3,2} (\lambda^{1\text{ } 3}_2)^{-1}(\lambda^{3\text{ } 1}_2)^{-1}\theta(1,2,3)(\phi_3 - \delta)\\
   & & + c_{3,4}(\lambda^{1\text{ } 3}_4)^{-1}(\lambda^{3\text{ } 1}_4)^{-1}\theta(1,4,3)(\phi_3 - \delta))/\delta\\
& =& -A^{-29}+A^{-25}+2A^{-23}-3A^{-19}-5A^{-17}+5A^{-15}+11A^{-13}\\
& & -4A^{-11}-18A^{-9}+2A^{-7}+24A^{-5}-A^{-3}-25A^{-1}-2A+\\
& & 25A^3+3A^5-20A^7-5A^9+14A^{11}+3A^{13}-7A^{15}-3A^{17}+\\
& & 3A^{19}+A^{21} \\
& = &  A^{-29}g_6,\\
& & \\
\pair{\f,y_2}/\delta & = & (c_{3,2} (\lambda^{1\text{ } 3}_2)^{-1}(\lambda^{3\text{ } 1}_2)^{-1}\theta(1,2,3)(\phi_3 - \delta)(\phi_3 -\phi_1)\\
   & & + c_{3,4}(\lambda^{1\text{ } 3}_4)^{-1}(\lambda^{3\text{ } 1}_4)^{-1}\theta(1,4,3)(\phi_3 - \delta)(\phi_3 -\phi_1))/\delta\\
& = & A^{-37}-3A^{-33}-A^{-31}+5A^{-29}+2A^{-27}-7A^{-25}-2A^{-23}+\\
& & 9A^{-21}+2A^{-19}-11A^{-17}-A^{-15}+12A^{-13}-13A^{-9}+A^{-7}\\
& & +12A^{-5}-A^{-3}-10A^{-1}+2A+9A^3-2A^5-7A^7+5A^{11}\\
& & -A^{13}-3A^{15}+2A^{19}-A^{23}+A^{25}+A^{27}-A^{31}\\
& = & A^{-37}g_7, \text{ and }\\
& & \\
\pair{\f,y_3}/\delta & = & (\phi_3 - \phi_2)\pair{\f,y_2}/\delta = A^{-45}g_8.\\
\end{array}\]

\end{document}